\newtheorem{theorem}{Theorem}
\newtheorem{lemma}{Lemma}
\newtheorem{remark}{Remark}
\newtheorem{proposition}{Proposition}
\newtheorem{corollary}{Corollary}
\newtheorem{definition}{Definition}
\newcommand \p {\partial}
\newcommand \x {\mathrm{x}}
\newcommand \R {\mathbb{R}}
\newcommand \N {\mathbb{N}}
\renewcommand \L {\mathrm{L}}
\newcommand \W {\mathrm{W}}
\newcommand \WW {\mathbf{W}}
\newcommand \LL {\mathbf{L}}
\renewcommand \H {\mathrm{H}}
\newcommand \HH {\mathbf{H}}
\newcommand \I {\mathrm{I}}
\newcommand \Id {\mathrm{Id}}
\renewcommand \d {\mathrm{d}}
\renewcommand \det {\mathrm{det}}
\newcommand \trace {\mathrm{tr}}
\newcommand \cof {\mathrm{cof}}
\DeclareMathOperator{\divg}{div}
\gdef\SetFigFont#1#2#3#4#5{%
	\reset@font\fontsize{#1}{#2pt}%
	\fontfamily{#3}\fontseries{#4}\fontshape{#5}%
	\selectfont}%
\title{Optimal control problem for systems of conservation laws, with geometric parameter, and application to the Shallow-Water Equations}
\author{S\'ebastien Court\thanks{Institute for Mathematics and Scientific Computing, Karl-Franzens-Universit\"{a}t, Heinrichstr. 36, 8010 Graz, Austria, email: {\tt sebastien.court@uni-graz.at}, {\tt laurent.pfeiffer@uni-graz.at}.} \and Karl Kunisch\thanks{Institute for Mathematics and Scientific Computing, Karl-Franzens-Universit\"{a}t, Heinrichstr. 36, 8010 Graz, Austria, and Radon Institute, Austrian Academy of Sciences, email: {\tt karl.kunisch@uni-graz.at}.} \and Laurent Pfeiffer\footnotemark[1]}
\begin{document}

\maketitle

\begin{abstract}
A theoretical framework and numerical techniques to solve optimal control problems with a spatial trace term in the terminal cost and governed by regularized nonlinear hyperbolic conservation laws are provided. Depending on the spatial dimension, the set at which the optimum of the trace term is reached under the action of the control function can be a point, a curve or a hypersurface. The set is determined by geometric parameters. Theoretically the lack of a convenient functional framework in the context of optimal control for hyperbolic systems leads us to consider a parabolic regularization for the state equation, in order to derive optimality conditions. For deriving these conditions, we use a change of variables encoding the sensitivity with respect to the geometric parameters. As illustration, we consider the shallow-water equations with the objective of maximizing the height of the wave at the final time, a wave whose location and shape are optimized via the geometric parameters. Numerical results are obtained in 1D and 2D, using finite difference schemes, combined with an immersed boundary method for iterating the geometric parameters.
\end{abstract}

\noindent{\bf Keywords:} Nonlinear conservation laws, Hybrid optimal control problems, Optimality conditions, Shape Optimization, Shallow-water equations, Immersed boundary methods.\\
\hfill \\
\noindent{\bf AMS subject classifications (2010): 49K20, 35L65, 35D35, 93C30, 90C46.}

\tableofcontents

\section{Introduction}

We are interested in optimal control problems with PDE-constraints governed by a family of systems of nonlinear conservation laws of the following form:
\begin{eqnarray}
	\left\{ \begin{array} {rcl}
		\displaystyle  \dot{u} + \divg(F(u)) = B\xi & &
		\text{in } \Omega \times (0,T), \\
		\displaystyle u(\cdot,0) = u_0 & & \text{in } \Omega.
	\end{array} \right.\label{mainsys00}
\end{eqnarray}
The objective of this article is to describe a strategy for deriving and solving optimality conditions when the terminal cost involves a trace term in space. In dimension $1$, this consists in optimizing a function $\phi(u)$ at time $T$, at some point $\eta$ which is left free, as well as the control function $\xi$. The corresponding control problem is given below:
\begin{eqnarray*}
	(\mathcal{P})
	& & \left\{ \begin{array} {l}
		\displaystyle \max_{(\xi,\eta)}
		J(\xi,\eta) := \int_0^T \| \xi \|^2\d t +  \phi(u(\eta, T)), \\
		\displaystyle \text{where $u$ satisfies system~\eqref{mainsys00}.}
	\end{array} \right.
\end{eqnarray*}
For the sake of simplicity, we will consider a quadratic cost for the control function, for a norm specified later. If we want to give a consistent mathematical framework to the original problem formulation, we need to consider solutions which are continuous in space at time $T$, and so continuous in time and in space. The classical theory for conservation laws deals mainly with a weak framework, addressing the question of existence of solutions whose regularity refers only to $\L^1(\Omega)$ or $\mathrm{BV}(\Omega)$ functions. In order to circumvent this regularity issue, we consider a parabolic regularization, by adding to system~\eqref{mainsys00} an elliptic operator, so that we work with the following system:
\begin{eqnarray}
	\left\{ \begin{array} {rcl}
		\displaystyle  \dot{u} -\kappa\Delta u + \divg(F(u)) = B\xi & &
		\text{in } \Omega \times (0,T), \\
		\displaystyle u(\cdot,0) = u_0 & & \text{in } \Omega.
	\end{array} \right. \label{mainsys000}
\end{eqnarray}
This is a classical approach, for which the elliptic operator $-\Delta$ is added, in order to get the desired regularity for the state. Thus we get regularity for the control-to-state mapping, and derive rigorously optimality conditions. Indeed, the lack of a convenient theory for the regularity of solutions of systems of conservation laws has as a consequence the difficulty of analyzing the sensitivity of the control-to-state mapping.

Results concerning the definition of sensitivity with respect to shocks of the control-to-state mapping, or in presence of discontinuities, were obtained in~\cite{Ulbrich1999, Ulbrich2002, Ulbrich2003, OuZhu2013, GilesUlbrich1, GilesUlbrich2}. Sensitivity with respect to switching points has been studied in~\cite{PUL2014, PU2015}. Our approach is different, because on one side our problem requires strong solutions, and on the other side we are not concerned here with such singularities in the evolution of the state variable. The original problem~$(\mathcal{P})$ is then transformed into the following:
\begin{eqnarray*}
(\mathcal{P})
& & \left\{ \begin{array} {l}
\displaystyle \max_{(\xi,\eta)}
J(\xi,\eta) := \int_0^T \| \xi \|^2\d t + \phi(u(\eta, T)), \\
\displaystyle \text{where $u$ satisfies system~\eqref{mainsys000}.}
\end{array} \right.
\end{eqnarray*}

The addition of a parabolic term does not yet enable us to derive necessary optimality conditions in a direct way, since in general, $u(\cdot,T)$ is not differentiable. This difficulty can be circumvented by performing a specific change of variables in space, leading to another problem~$(\tilde{\mathcal{P}})$ with a modified functional~$\tilde{J}$. The change of variables is designed in such a way that the geometric parameter does not appear anymore in the terminal cost, but in the -- new -- state equation. The mapping associating $\eta$ with the solution~$\tilde{u}$ of the new state equation, is now continuously differentiable. This paves the way for the derivation of optimality conditions. 

\paragraph{Link with hybrid optimal control problems.}
Let us mention that the change of variables has a similar structure to the one used for tackling a class of hybrid optimal control problems. For such problems, the time at which a switch in the state equation occurs has to be optimized (see~\cite{CKP1} and~\cite{CKP2}). The switching time plays an analogous role to the geometric parameter considered in the present article.

\paragraph{Numerical realization.}
As illustration of our motivation, we consider in the numerical examples the Shallow-Water equations, without parabolic regularization. These equations govern the evolution of the height and the horizontal velocity of water in a basin whose size is considered larger than any other length quantity in the problem. The goal of the control problem for this model is to maximize the height of a wave, at final time $T$. The interest of conservation laws lies in the conservation of the mass (here represented by the height), and so here the non-dissipativity facilitates the numerical realization. Indeed, in case of parabolic diffusion for instance, the mass is instantaneously spread all over the domain, and thus reaching a maximum for the mass at some place in the domain with a gradient algorithm can be a difficult task. The other interest in such a model is that the location of the maximum does not matter. Indeed, once a wave is created and its height reaches a maximum, this wave is transported, and its height is conserved, as long as it does not reach the boundary. So we claim -- and we observe numerically -- that if $T$ is increased, the maximum remains the same, as well as the optimal control function, only the location of the maximum changes, namely the value of $\eta$.


\paragraph{Plan.} The paper is organized as follows: In section~\ref{sec-defpb} we make assumptions on the strong regularity for the state equation and the corresponding linearized systems. Next, with the use of a change of variables judiciously constructed we define an optimal control problem for which the control-to-state mapping is differentiable. Optimality conditions are calculated in section~\ref{sec-optcond}, and some discussion on its numerical realization is given in section~\ref{secexploit}. Section~\ref{sec-SW} is devoted to numerical illustrations with the Shallow-Water equations, in 1D and in 2D. Conclusions are given in section~\ref{sec-conc}, and in the Appendix we give examples of conservation law models for which the regularity initially assumed is satisfied.

\paragraph{Notation.}
Most of the variables introduced in this paper are multi-dimensional. The functional spaces in which they lie is denoted in bold, as for example
\begin{eqnarray*}
\LL^p(\Omega) = [\L^p(\Omega)]^m,  \quad \WW^{s,p} = [\W^{s,p}(\Omega)]^m, \quad
\HH^{s} = [\H^{s}(\Omega)]^m,
\end{eqnarray*}
for some integer $m\in \N$, and for $p\geq 1$, $s \geq 0$.

\section{The optimal control problem} \label{sec-defpb}

\subsection{The state equation and its regularization} \label{sec-general}

In this article we consider  optimal control problems governed by systems of conservations laws of the form
\begin{eqnarray} \label{mainsys0}
\left\{ \begin{array} {rcl}
\displaystyle  \dot{u} + \divg(F(u)) = B\xi & &
\text{in } \Omega \times (0,T), \\
u = 0 & & \text{on } \p \Omega \times (0,T), \\
\displaystyle u(\cdot,0) = u_0 & & \text{in } \Omega,
\end{array} \right. \label{mainsys}
\end{eqnarray}
where $u : \Omega \times (0,T) \rightarrow \R^k$ is a vector-field, $\Omega \subset
\R^d$ is a
bounded domain  with smooth boundary $\partial \Omega$, and $u_0$ is a given initial state.
Further  $\xi:\Omega \times (0,T) \rightarrow \R^l$ denotes the control
function, and $B$ is the control operator. Throughout we will set $B = \mathds{1}_{\omega}$ defined as
\begin{eqnarray*}
	(B\varphi)(x) = \left\{
	\begin{array} {ll}
		\varphi(x), & \text{ if } x\in \omega, \\
		0 & \text{ otherwise.}
	\end{array} \right.
\end{eqnarray*}
The question of the admissibility of  boundary conditions for systems of
conservation laws is non-trivial. The nature of these boundary
conditions depends  on the choice of $F$ in an essential manner. Characterizations
for the admissibility of boundary conditions were proposed
in~\cite{BLN1979} and~\cite{Dubois1988}. Discussion about this point and
different notions of solutions, including entropy - or renormalized -
solutions are not in the focus of this paper. Rather, for the mathematical analysis of the optimal control problem, we shall rely on a parabolic regularization together with homogeneous
Dirichlet boundary conditions. In this case the question of existence of trajectories, locally in time, is much simpler.

\paragraph{Regularized system.}

System~\eqref{mainsys} is modified by adding a parabolic term, whose
coefficient is denoted by $\kappa >0$. The corresponding system is the
following:

\begin{eqnarray}
\left\{ \begin{array} {rcl}
\dot{u} - \kappa \Delta u + \divg (F(u))  =  B\xi & & \text{in } \Omega
\times (0,T), \\
u = 0 & & \text{on } \p \Omega \times (0,T), \\
u(\cdot,0)  = u_0 & & \text{in } \Omega.
\end{array} \right.
\label{mainsysreg}
\end{eqnarray}

In this setting the state variable $u$ and control variable $\xi$ are chosen in reflexive Banach spaces $\mathscr{U}$ and $\mathscr{C}$, respectively, the initial condition is chosen in $\mathscr{U}_0$, and the control operator $B$ is considered as an operator in ${\mathcal{L}}(\mathscr{C},\mathscr{F})$, where specifically we use the spaces
\begin{eqnarray*}
	\mathscr{U}  =  \L^p(0,T;\mathbf{W}^{2,p}(\Omega) \cap
	\mathbf{W}_0^{1,p}(\Omega))
	\cap \W^{1,p}(0,T;\mathbf{L}^p(\Omega)), & &
	\mathscr{F} =  \L^p(0,T;\mathbf{L}^p(\Omega)), \\
	\mathscr{U}_0  =  \mathbf{W}^{2/{p'},p}(\Omega) \cap
	\mathbf{W}_0^{1/{p'},p}(\Omega),
	& & \mathscr{C} =  \L^p(0,T;\mathbf{L}^p(\omega)).
\end{eqnarray*}
We will consider control functions with norm constraints, namely
controls in the closed and  bounded set defined by
\begin{eqnarray*}
	\mathscr{C}_c & := & \left\{    \xi \in \mathscr{C} | \
	\int_0^T \| \xi\|^p_{\LL^p(\omega)}\d t \leq c    \right\},
\end{eqnarray*}
for $c>0$. Note that $\mathscr{U}_0$ is the trace space of
$\mathscr{U}$, in the sense that
\begin{eqnarray*}
	\mathscr{U} & \hookrightarrow & C([0,T]; \mathscr{U}_0),
\end{eqnarray*}
and that for $p > 1+d/2$ the space $\mathbf{W}^{2/{p'},p}(\Omega)$ is
embedded into $C(\overline{\Omega})$. In what follows, we will consider
$p > d$, and $p \geq 2$ when $d=1$. We refer to
section~\ref{appendix-Lp} for more details.
The mapping $F$ is assumed to be of class $\mathcal{C}^2$ from $\R^k$
to $\R^{k\times d}$. This regularity is needed for
Lemma~\ref{lemma-estlip} in section~\ref{appendix-Lp}.
We note that $(2/{p'})\, p = 2(p-1)  \geq 1$, and hence  the function
$F$ defines a Nemytskii operator from $\mathbf{W}^{1,p}(\Omega)$ to
itself (see~\cite[Lemma~A.2]{BB1974}).\\

The following result is a direct consequence of Proposition~\ref{prop-bof}, in section~\ref{sec-wellposed}.

\begin{proposition} \label{prop01}
	For all $u_0 \in \mathscr{U}_0$ and $\xi \in \mathscr{C}_c$, there
	exists $T>0$, depending only on $\kappa>0$, $u_0$ and $c$, such that
	system~\eqref{mainsysreg} admits a unique solution $u \in \mathscr{U}$.
\end{proposition}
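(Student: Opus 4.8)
The plan is to recast the regularized system~\eqref{mainsysreg} as a fixed-point problem in $\mathscr{U}$ and then to invoke the abstract local well-posedness statement of Proposition~\ref{prop-bof}. The starting point is the maximal $\L^p$-regularity of the Dirichlet heat operator: the map $u \mapsto (\dot u - \kappa \Delta u,\, u(\cdot,0))$ is an isomorphism from $\mathscr{U}$ onto $\mathscr{F}\times\mathscr{U}_0$, with a continuous inverse whose norm is controlled through $\kappa$. Denoting by $\mathcal{S}(g,u_0)$ the solution of the linear problem $\dot u - \kappa\Delta u = g$, $u=0$ on $\p\Omega\times(0,T)$, $u(\cdot,0)=u_0$, a function $u\in\mathscr{U}$ solves~\eqref{mainsysreg} if and only if it is a fixed point of
\[
\Phi(u) := \mathcal{S}\big(B\xi - \divg(F(u)),\, u_0\big).
\]

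The next step is to verify that $\Phi$ is well defined and that the nonlinearity has the required mapping and Lipschitz properties. Since $p>1+d/2$, one has $\mathscr{U}\hookrightarrow C([0,T];\mathscr{U}_0)\hookrightarrow C([0,T];C(\overline{\Omega}))$, so every $u\in\mathscr{U}$ is bounded in sup-norm by its $\mathscr{U}$-norm. Writing $\divg(F(u)) = F'(u):\nabla u$ and using that $F\in\mathcal{C}^2$ induces a Nemytskii operator on $\WW^{1,p}$ (as recalled after~\eqref{mainsysreg}), the factor $F'(u)$ is bounded in $L^\infty$ while $\nabla u\in\mathscr{F}$, hence $\divg(F(u))\in\mathscr{F}$; the control contribution obeys $\|B\xi\|_{\mathscr{F}}=\|\xi\|_{\mathscr{C}}$, which for $\xi\in\mathscr{C}_c$ is bounded by a function of $c$ alone. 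The quantitative Lipschitz estimate for $\divg(F(\cdot))$ on bounded sets is precisely Lemma~\ref{lemma-estlip}, which is where the $\mathcal{C}^2$-regularity of $F$ enters.

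With these ingredients I would run a Banach fixed-point argument. Let $u_\star := \mathcal{S}(B\xi,u_0)$ be the solution of the purely linear problem; by maximal regularity its $\mathscr{U}$-norm is bounded in terms of $\kappa$, $\|u_0\|_{\mathscr{U}_0}$ and $c$ only. Fixing a radius $R$ slightly larger than $\|u_\star\|_{\mathscr{U}}$ and working on the closed ball of radius $R$ inside the affine subspace $\{u\in\mathscr{U}: u(\cdot,0)=u_0\}$, one shows that for $T$ small enough $\Phi$ maps this ball into itself and is a contraction. Because $\xi\in\mathscr{C}_c$ yields bounds uniform in the admissible control, the resulting threshold on $T$ depends only on $\kappa$, $u_0$ and $c$, as asserted; Banach's theorem then gives a unique fixed point, and uniqueness in all of $\mathscr{U}$ follows from the same contraction estimate applied to the difference of two solutions (which vanishes at $t=0$) together with a short continuation argument.

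The step I expect to be the genuine obstacle is the contraction. The nonlinearity $\divg(F(u))$ is first order, so the naive bound $\|\divg(F(u))-\divg(F(v))\|_{\mathscr{F}}\le C(R)\|u-v\|_{\mathscr{U}}$ carries no smallness in $T$ and cannot by itself produce a contraction. The remedy — and the reason for the restriction $p>d$ (with $p\ge 2$ when $d=1$) — is to exploit that the embedding $\mathscr{U}_0\hookrightarrow C(\overline{\Omega})$ is \emph{strict}: for a difference $w=u-v$ with $w(\cdot,0)=0$, interpolation yields $\|w\|_{C([0,T];C(\overline{\Omega}))}\le C\,T^{\gamma}\|w\|_{\mathscr{U}}$ for some $\gamma>0$, and this gained power of $T$ feeds into the Lipschitz estimate of Lemma~\ref{lemma-estlip} and closes the contraction. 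This small-time gain for first-order semilinear perturbations is exactly the mechanism encoded in Proposition~\ref{prop-bof}, of which the present statement is the announced specialization.
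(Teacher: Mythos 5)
Your skeleton is the same as the paper's: recast \eqref{mainsysreg} as a fixed-point problem for $v \mapsto \mathcal{S}\big(B\xi - \divg(F(v)),u_0\big)$ using the maximal $\L^p$-regularity estimate \eqref{estMR}, prove ball invariance and a contraction for small $T$ via Lemma~\ref{lemma-estlip}, and conclude with Banach's theorem (you even handle the $B\xi$ term and its uniformity over $\mathscr{C}_c$ more explicitly than section~\ref{sec-wellposed} does). The gap is in your final paragraph, where you identify the source of the small-time gain. The obstacle you describe is not actually there: the Lipschitz estimate \eqref{estlip2} of Lemma~\ref{lemma-estlip} already carries the factor $T^{1/p}$, and the mechanism producing it is elementary Hölder in time, not interpolation from zero initial data. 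Since $2/p'\geq 1$ (this is where $p\geq 2$ enters), one has $\WW^{2/p',p}(\Omega)\hookrightarrow\WW^{1,p}(\Omega)$, so $\|\divg(F(v_1))-\divg(F(v_2))\|_{\LL^p(\Omega)}$ is bounded \emph{pointwise in time} by $C(R)\,\|v_1-v_2\|_{\WW^{2/p',p}(\Omega)}$; taking the $\L^p(0,T)$-norm of a quantity bounded in $\L^{\infty}(0,T)$ yields $T^{1/p}$, and the $\L^{\infty}(0,T;\WW^{2/p',p}(\Omega))$-norm of the difference is part of $\|\cdot\|_{\mathscr{U}}$ — the paper includes it in the definition of the $\mathscr{U}$-norm for precisely this purpose. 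No vanishing of the difference at $t=0$ is needed, and this is why Proposition~\ref{prop-bof} closes the contraction directly.

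More seriously, the remedy you propose would fail if one actually needed it. The gain $\|w\|_{C([0,T];C(\overline{\Omega}))}\leq C\,T^{\gamma}\|w\|_{\mathscr{U}}$ for $w(\cdot,0)=0$ is true (write $w(t)=\int_0^t\dot w$ and interpolate), but it controls only the sup-norm of the difference, whereas the nonlinearity is first order: $\divg(F(u))-\divg(F(v))$ contains the term $F'(\cdot)\!:\!\nabla(u-v)$, so any Lipschitz estimate — see the right-hand side of \eqref{estlip2} — must see the difference in $\WW^{2/p',p}(\Omega)$ (hence its gradient in $\LL^p(\Omega)$) pointwise in time. A $T^{\gamma}$-gain on the space-time sup-norm cannot be ``fed into'' Lemma~\ref{lemma-estlip}, which simply does not accept that norm; and to control the gradient of the difference with smallness in $T$ you are forced back to the Hölder-in-time argument above, i.e.\ to the mechanism already built into \eqref{estlip1}--\eqref{estlip2}. (Incidentally, the restrictions $p>d$ and $p\geq 2$ are there so that $\WW^{1,p}(\Omega)$ is a Banach algebra on which $F$ acts as a Nemytskii operator and so that the trace space embeds into $\WW^{1,p}(\Omega)$ — not for the strictness argument you invoke.) Your proof is repaired simply by citing Lemma~\ref{lemma-estlip} as stated, but the explanation you give of why the contraction closes, which you correctly single out as the crux, is the wrong one.
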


We stress that the interval of existence  of solutions is uniform
with respect to $\xi$ in the ball $\mathscr{C}_c$.

\paragraph{Linearization.}

The differential of the operator $F$ at point $v$, denoted by $F'(v)$, is a tensor-field in $\R^{d\times k \times k}$. We use the
linear operators defined by
\begin{eqnarray*}
	\mathcal{A}_{\kappa}v = \kappa \Delta v, & &
	\mathcal{B}(u)v = 
	\divg (F'(u).v).
\end{eqnarray*}
The linearized form of system~\eqref{mainsysreg} is the following one,
where $v$ is the unknown, and $u$ and $f$ are data:
\begin{eqnarray}
\left\{ \begin{array} {rcl}
\dot{v} - \mathcal{A}_{\kappa}v + \mathcal{B}(u)v  =  f & &
\text{in } \Omega \times (0,T), \\
v = 0 & & \text{on } \p \Omega \times (0,T), \\
v(\cdot,0)  = v_0 & & \text{in } \Omega.
\end{array} \right.
\label{mainsysreglin}
\end{eqnarray}

The following result is proven as Proposition~\ref{propsyslinwell} in
section~\ref{sec-linLP}.

\begin{proposition} \label{prop02}
	Let be $T>0$ and $\kappa >0$. Assume that $u\in \mathscr{U}$, $f \in
	\mathscr{F}$ and $v_0 \in \mathscr{U}_0$. Then
	system~\eqref{mainsysreglin} admits a unique solution $v\in \mathscr{U}$.
\end{proposition}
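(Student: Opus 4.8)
The plan is to regard the linear system~\eqref{mainsysreglin} as a lower-order perturbation of the heat equation and to exploit the maximal $\L^p$-regularity of the Dirichlet Laplacian. The backbone is the known fact that $-\mathcal{A}_{\kappa} = -\kappa\Delta$ with homogeneous Dirichlet boundary conditions generates an analytic semigroup on $\mathbf{L}^p(\Omega)$ enjoying maximal parabolic regularity: for every $g\in\mathscr{F}$ and $v_0\in\mathscr{U}_0$ the problem $\dot v - \mathcal{A}_{\kappa}v = g$, $v=0$ on $\p\Omega\times(0,T)$, $v(\cdot,0)=v_0$, has a unique solution $v\in\mathscr{U}$ satisfying $\|v\|_{\mathscr{U}}\le C_0(\|g\|_{\mathscr{F}}+\|v_0\|_{\mathscr{U}_0})$; equivalently, $v\mapsto(\dot v-\mathcal{A}_{\kappa}v,\,v(\cdot,0))$ is an isomorphism from $\mathscr{U}$ onto $\mathscr{F}\times\mathscr{U}_0$. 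Uniqueness for~\eqref{mainsysreglin} then reduces to an a priori estimate, and existence follows from the method of continuity applied to the family $L_s:=\p_t-\mathcal{A}_{\kappa}+s\,\mathcal{B}(u)$, $s\in[0,1]$, together with that same estimate.

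First I would check that $\mathcal{B}(u)\colon\mathscr{U}\to\mathscr{F}$ is a bounded linear operator. Expanding by the product rule, $\mathcal{B}(u)v=\divg(F'(u).v)=F'(u):\nabla v+(F''(u).\nabla u).v$. Since $u\in\mathscr{U}\hookrightarrow C([0,T];\mathscr{U}_0)\hookrightarrow C([0,T];C(\overline{\Omega}))$ and $F\in\mathcal{C}^2$, the coefficient $F'(u)$ is bounded on $\overline{\Omega}\times[0,T]$, so the top-order contribution is controlled by $\|F'(u)\|_{\L^\infty}\,\|\nabla v\|_{\mathscr{F}}$. For the term that is of order zero in $v$, I would use that $p>d$, whence $\mathbf{W}^{1,p}(\Omega)\hookrightarrow C(\overline{\Omega})$ and $\nabla u\in\L^p(0,T;\mathbf{L}^\infty(\Omega))$; combined with $v\in C([0,T];\mathbf{L}^p(\Omega))$ this gives $(F''(u).\nabla u).v\in\mathscr{F}$. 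Hence $\mathcal{B}(u)$ maps $\mathscr{U}$ continuously into $\mathscr{F}$.

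The crux is the a priori estimate, and the main obstacle is that $\mathcal{B}(u)v$ is first order in $v$, so it cannot be absorbed into the maximal-regularity estimate as it stands. I would resolve this with the interpolation inequality $\|\nabla w\|_{\mathbf{L}^p}\le\varepsilon\|w\|_{\mathbf{W}^{2,p}}+C_\varepsilon\|w\|_{\mathbf{L}^p}$, valid for $w\in\mathbf{W}^{2,p}\cap\mathbf{W}^{1,p}_0$ and any $\varepsilon>0$, applied pointwise in time. Writing~\eqref{mainsysreglin} as $\dot v-\mathcal{A}_{\kappa}v=f-\mathcal{B}(u)v$ and applying maximal regularity on $(0,t)$ yields
\begin{eqnarray*}
\|v\|_{\mathscr{U}(0,t)}\;\le\;C_0\big(\|f\|_{\mathscr{F}}+\|\mathcal{B}(u)v\|_{\mathscr{F}(0,t)}+\|v_0\|_{\mathscr{U}_0}\big).
\end{eqnarray*}
Bounding the top-order part of $\|\mathcal{B}(u)v\|_{\mathscr{F}(0,t)}$ via the interpolation inequality and the remainder by $\|\nabla u(s)\|_{\mathbf{L}^\infty}\|v(s)\|_{\mathbf{L}^p}$, then raising to the power $p$, I choose $\varepsilon$ small enough that the resulting term $\varepsilon^p\|v\|^p_{\L^p(0,t;\mathbf{W}^{2,p})}$ is absorbed into the left-hand side. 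What remains is controlled by $\int_0^t b(s)\,\|v\|^p_{\mathscr{U}(0,s)}\,\d s$ with $b\in\L^1(0,T)$, where $b$ collects $C_\varepsilon$ and $\|\nabla u(s)\|^p_{\mathbf{L}^\infty}$ (integrable since $\nabla u\in\L^p(0,T;\mathbf{L}^\infty)$), after using $\|v(s)\|_{\mathbf{L}^p}\le C\|v\|_{\mathscr{U}(0,s)}$ from the embedding $\mathscr{U}(0,s)\hookrightarrow C([0,s];\mathbf{L}^p)$. Gr\"onwall's inequality then gives $\|v\|_{\mathscr{U}}\le C(u)\,(\|f\|_{\mathscr{F}}+\|v_0\|_{\mathscr{U}_0})$, with a constant independent of $f$, $v_0$ and of the solution itself.

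This estimate immediately yields uniqueness (take $f=0$, $v_0=0$). Since the very same bound holds uniformly for every $L_s=\p_t-\mathcal{A}_{\kappa}+s\,\mathcal{B}(u)$, $s\in[0,1]$ (the perturbation coefficient only decreases), and since $L_0$ is boundedly invertible from $\mathscr{U}$ onto $\mathscr{F}\times\mathscr{U}_0$ by maximal regularity, the method of continuity guarantees that $L_1$ is boundedly invertible as well. Inverting $L_1$ produces the unique solution $v\in\mathscr{U}$ of~\eqref{mainsysreglin}, which proves the Proposition. The only genuinely delicate point is the interpolation-plus-Gr\"onwall bookkeeping that splits the first-order perturbation into an absorbable top-order piece and an integrable zero-order piece; everything else is the standard maximal-regularity machinery.
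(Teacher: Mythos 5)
Your proof is correct, but it takes a genuinely different route from the paper's. The paper proves this statement (as Proposition~\ref{propsyslinwell} in section~\ref{sec-linLP}) by invoking an abstract perturbation result for non-autonomous maximal regularity, namely~\cite[Proposition~1.3]{Arendt2007}, restated as Proposition~\ref{prop1.3}: it suffices to verify the pointwise-in-time relative bound $\|\mathcal{B}(u(\cdot,t))v\|_{\LL^p(\Omega)} \leq \frac{1}{2M}\|v\|_{D(A)} + C\|v\|_{\LL^p(\Omega)}$ with a constant uniform in $t$. The paper does this without ever differentiating $F'(u)$: it uses the Banach algebra property of $\WW^{1,p}(\Omega)$ to get $\|\divg(F'(u(\cdot,t)).v)\|_{\LL^p} \le C\|F'(u(\cdot,t))\|_{\WW^{1,p}}\|v\|_{\WW^{1,p}}$, notes that $\|F'(u)\|_{\L^{\infty}(0,T;\WW^{1,p}(\Omega))}$ is finite, and then applies the interpolation inequality $\|v\|_{\WW^{2/p',p}} \le \|v\|_{\LL^p}^{1/p}\|v\|_{\WW^{2,p}}^{1/p'}$ together with Young's inequality; existence and uniqueness then come for free from the cited theorem. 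You instead prove everything by hand: you split $\mathcal{B}(u)v$ by the product rule (which is why you face a time-dependent, only $\L^p$-integrable zero-order coefficient $\|\nabla u(s)\|_{\mathbf{L}^{\infty}}$ and hence need Gr\"onwall), derive the a priori estimate by absorption into the maximal-regularity bound, and recover existence via the method of continuity starting from the invertibility of $v \mapsto (\dot v - \mathcal{A}_{\kappa}v, v(\cdot,0))$. What your approach buys is self-containedness --- no reliance on the non-autonomous perturbation theorem --- and a bit more robustness, since the Gr\"onwall step tolerates lower-order coefficients that are merely integrable in time, whereas the criterion used in the paper requires a bound uniform in $t$ (available here precisely because $F'(u) \in \L^{\infty}(0,T;\WW^{1,p}(\Omega))$). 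What the paper's approach buys is brevity: the whole proof reduces to a one-estimate verification of a cited hypothesis. Both arguments share the same core mechanism: maximal $\L^p$-regularity of the Dirichlet Laplacian, plus an interpolation/Young splitting of the first-order term $\divg(F'(u).v)$ into a small, absorbable second-order piece and a controllable zero-order piece, yielding the same quantitative bound $\|v\|_{\mathscr{U}} \le C(u)\left(\|f\|_{\mathscr{F}} + \|v_0\|_{\mathscr{U}_0}\right)$.
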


The adjoint system, whose unknown is denoted by $q$, is the following
linear system, backward in time, with $u$ and $q_T$ as data:
\begin{eqnarray}
\left\{ \begin{array} {rcl}
-\dot{q} - \mathcal{A}_{\kappa}^{\ast}q + \mathcal{B}(u)^{\ast}q  = 0 & &
\text{in } \Omega \times (0,T), \\
q = 0 & & \text{on } \p \Omega \times (0,T), \\
q(\cdot,T)  = q_T & & \text{in } \Omega.
\end{array} \right.
\label{adjsysreg}
\end{eqnarray}
The adjoint operators used above are given by
\begin{eqnarray*}
	\mathcal{A}_{\kappa}^{\ast}q = \kappa \Delta q, & &
	\mathcal{B}(u)^{\ast}q =
	- F'(u)^{\ast}.\nabla q.
\end{eqnarray*}
The question of wellposedness for system~\eqref{adjsysreg} with solutions of transposition in $\L^{p'}(0,T;\LL^{p'}(\Omega))$  is discussed
in section~\ref{App2-adj}.



\subsection{Change of variables and change of state equation} \label{sec-trans} \label{sec-change}

Our optimization problem will involve the optimization of the state variable $u$ at time T, at a parameterized submanifold $\Gamma[\eta]$ in the domain $\Omega$. The set $\Gamma[\eta]$ is itself part of the optimization problem. For reasons including sensitivity analysis and numerical realization, the use of a change of variables to a reference configuration is essential. This is discussed in the present subsection.

\paragraph{Geometric considerations and definition of a change of variables.}\hfill

We consider a family $\Gamma[\eta]$ of smooth submanifolds of codimension 1 inside $\Omega$ depending smoothly on a parameter $\eta \in \mathscr{G} \subset \R^m$. The reference geometric object will be denoted by $\Gamma_0$, corresponding to the parameter $\eta = 0$. The first part of this subsection is devoted to the definition of a diffeomorphism, depending on $\eta$
\begin{eqnarray*}
	\begin{array} {rrcl}
X: & \overline{\Omega} & \rightarrow &  \overline{\Omega} \\
 & y & \mapsto & x = X(y),
 \end{array}
\end{eqnarray*}
satisfying $\Gamma[\eta] = \{ X(y) | \ y \in \Gamma_0\}$, and further properties to be specified below. Within this section we do not indicate the dependence of $X$ on $\eta$.
Let $\mathcal{S}_0$ be a neighborhood of $\Gamma_0$ in $\Omega$, and let $X_{\mathcal{S}_0}[\eta]$ be a smooth diffeomorphism defined on $\overline{\mathcal{S}_0}$ which has the following properties:
\begin{itemize}
\item For all $\eta \in \mathscr{g}$, $X_{\mathcal{S}_0}[\eta]$ is a $\mathcal{C}^1$-diffeomorphism from $\mathcal{S}_0$ onto $X_{\mathcal{S}_0}[\eta](\mathcal{S}_0) \subset \subset \Omega$, satisfying $\Gamma[\eta] = \{ X_{\mathcal{S}_0}[\eta](y) | \ y \in \Gamma_0\}$.
\item For all $\eta \in \mathscr{G}$, the mapping $y\mapsto X_{\mathcal{S}_0}[\eta](y)$ lies in $\mathbf{H}^{2+\epsilon}(\Omega) \hookrightarrow \WW^{2,\infty}(\Omega)$ for $\epsilon > d/2$, as well as its inverse that we denote by $x\mapsto Y_{\mathcal{S}_0}[\eta](x$.)
\item The mappings $\eta \mapsto X_{\mathcal{S}_0}[\eta]$ and $\eta \mapsto Y_{\mathcal{S}_0}[\eta]$ are of class $\mathcal{C}^1$ from $\mathscr{G}$ to $\mathbf{W}^{2,\infty}(\Omega)$.
\end{itemize}
The explicit construction of such a mapping $X_{\mathcal{S}_0}[\eta]$ is explained in section~\ref{sec-cv1D} for dimension~1, and in section~\ref{sec-cv2D} for dimension~2 (where actually $X_{\mathcal{S}_0}$ is $\mathcal{C}^{\infty}$ with respect to $y$ and $\eta$). In dimension~1, with $\Omega \subset \R$ , $\Gamma_0$ is a singleton $\{\eta_0\}$, and $X$ is constructed explicitly such that
\begin{eqnarray*}
X(\Gamma_0) = \Gamma[\eta] = \{\eta \}.
\end{eqnarray*}
In higher dimensions, we first extend $\mathcal{S}_0$ to a larger set $\mathcal{S}_0^{(\rho)}$ containing $\mathcal{S}_0$. See Figure~\ref{fig-changevar}. The number $\rho \geq 0$ represents the width of the extension $\mathcal{S}_0^{(\rho)}$, such that $\mathcal{S}_0^{(0)} = \mathcal{S}_0$. Then we define $X$ as an extension of $X_{\mathcal{S}_0}[\eta]$ to $\overline{\Omega}$, satisfying
\begin{eqnarray}
\left\{ \begin{array} {lcl}
\det \nabla X = 1 & & \text{in } \mathcal{S}^{(\rho)}_0 \setminus \overline{\mathcal{S}_0}, \\
X = X_{\mathcal{S}_0}[\eta] & & \text{on } \overline{\mathcal{S}_0},\\
X \equiv \Id & & \text{in } \overline{\Omega} \setminus \mathcal{S}^{(\rho)}_0. \\
\end{array} \right. \label{sysdet}
\end{eqnarray}

\begin{minipage}{\linewidth}
\begin{center}
\scalebox{0.5}{
\begin{picture}(0,0)
\includegraphics{./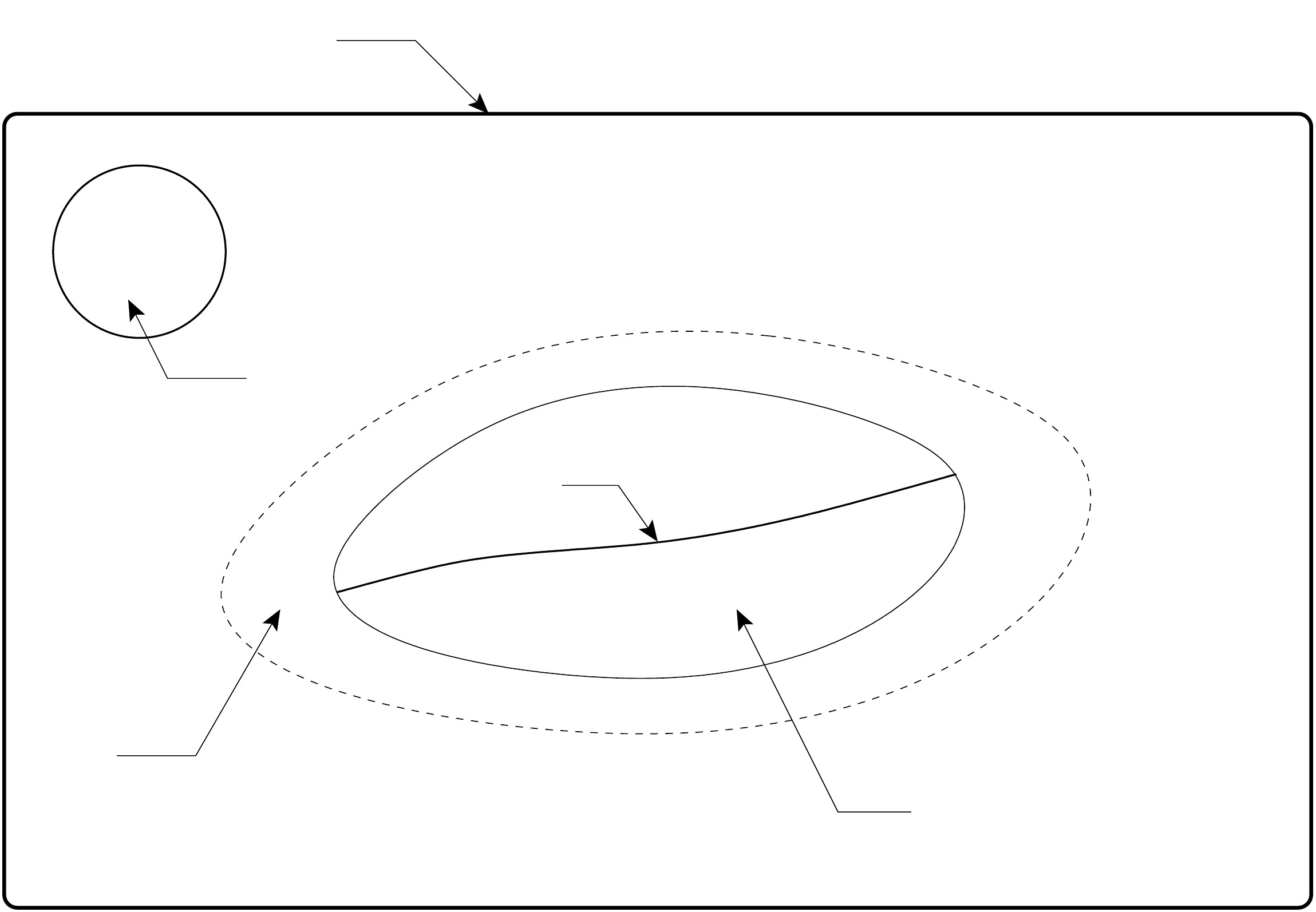}
\end{picture}%
\setlength{\unitlength}{4144sp}%
\begin{picture}(10506,7281)(1543,-7834)
\put(8416,-6900){\makebox(0,0)[lb]{\smash{{\SetFigFont{24}{28.8}{\rmdefault}{\mddefault}{\updefault}{\color[rgb]{0,0,0}$\mathcal{S}_0$}%
}}}}
\put(2521,-6400){\makebox(0,0)[lb]{\smash{{\SetFigFont{24}{28.8}{\rmdefault}{\mddefault}{\updefault}{\color[rgb]{0,0,0}$\mathcal{S}^{(\rho)}_0$}%
}}}}
\put(3200,-3475){\makebox(0,0)[lb]{\smash{{\SetFigFont{24}{28.8}{\rmdefault}{\mddefault}{\updefault}{\color[rgb]{0,0,0}$\omega$}%
}}}}
\put(4300,-770){\makebox(0,0)[lb]{\smash{{\SetFigFont{24}{28.8}{\rmdefault}{\mddefault}{\updefault}{\color[rgb]{0,0,0}$\partial \Omega$}%
}}}}
\put(6076,-4320){\makebox(0,0)[lb]{\smash{{\SetFigFont{24}{28.8}{\rmdefault}{\mddefault}{\updefault}{\color[rgb]{0,0,0}$\Gamma_0$}%
}}}}
\end{picture}%
}
\end{center}
\vspace*{-0.5cm}
\begin{figure}[H]
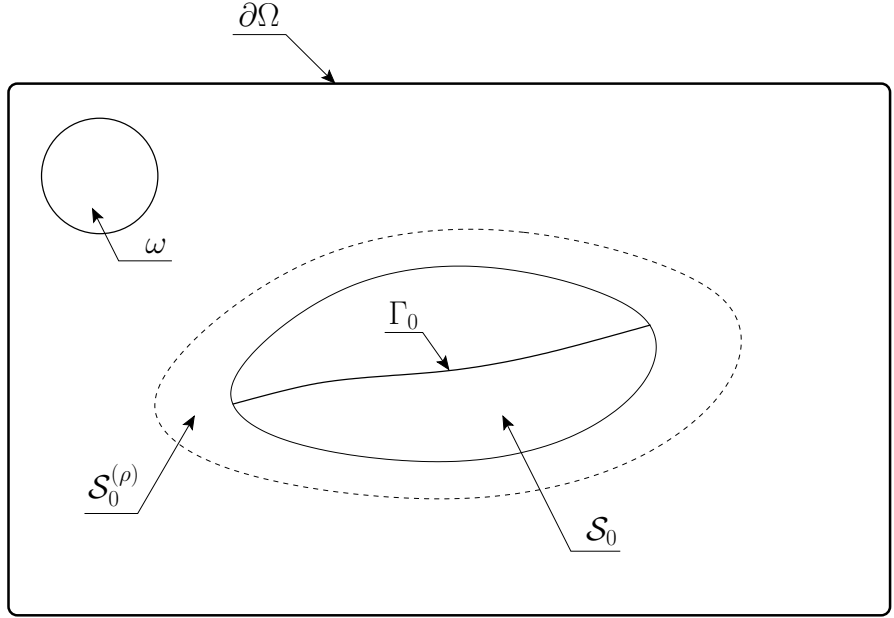

\caption{Geometric configuration: Control domain, geometric object, tubular neighborhoods.\label{fig-changevar}}
\end{figure}
\end{minipage}
\FloatBarrier
\hfill \\

We assume throughout that $\mathcal{S}_0^{(\rho)}$ and the support $\omega$ of the control are disjoint. Requiring that $X = \Id$ on $\overline{\omega}$
implies that the set $\omega$ is invariant under composition with $X$, and thus $B\circ X = B$, and $B$ is not sensitive with respect to $\eta$, provided that $\eta$ is sufficiently small. The existence of a vector field $X\in \mathcal{C}^1(\overline{\Omega};\overline{\Omega})$ satisfying~\eqref{sysdet} was proved in~\cite[Lemma~12]{CourtEECT2014} and in~\cite[Proposition~2 of Appendix~A]{CourtJDDE2015}. 
It is subject to the following compatibility condition corresponding to the conservation of the global volume:
\begin{eqnarray}
\int_{\mathcal{S}_0} (\det \nabla X_{\mathcal{S}_0}[\eta]) \d y & = & | \mathcal{S}_0 |. \label{condsysdet}
\end{eqnarray}

\paragraph{On the cofactor matrix.}
Given a matrix-valued field $A$, we denote by $\cof(A)$ the cofactor matrix of $A$. In the case where $A$ is invertible, we recall the identity $\cof(A)  =  (\det A)A^{-T}$. 
Note that in dimension 1 we have the trivial identity $\cof (A) \equiv 1$ for any $A$.\\

We denote by $Y$ the inverse of $X$. We require the following regularity properties, for every $\eta \in \mathscr{G}$:
\begin{eqnarray}
X \in [\W^{2,\infty}(\Omega)]^d, & & Y \in [\W^{2,\infty}(\Omega)]^d. \label{regXY}
\end{eqnarray}

If the mapping $X_{\mathcal{S}_0}[\eta]$ is smooth with respect to $\eta$ (which is the case in section~\ref{sec-cv2D}), then we claim that this is also the case for $X$. The regularity of $X$ with respect to the parameter $\eta$ is treated in the following lemma\footnote{We actually have more regularity for the mappings $X$ and $Y$, but those given here is sufficient for what follows.}.

\begin{lemma} \label{lemma-Xeta}
Assume that the mapping $\eta \in \mathscr{G} \mapsto X_{\mathcal{S}_0}[\eta] \in \mathbf{H}^{2+\epsilon}(\Omega)$ ($ \epsilon > d/2$) is of class $\mathcal{C}^1$, and that for all $\eta \in \mathscr{G}$ the mapping $X_{\mathcal{S}_0}[\eta]$ is a $\mathcal{C}^1$-diffeomorphism. Then, if $\eta$ is close to $0$, there exists a $\mathcal{C}^1$-diffeomorphism $X$ satisfying~\eqref{sysdet} such that the following mappings
\begin{eqnarray*}
	\begin{array} {rcl}
	\mathscr{G} & \rightarrow & \mathbf{W}^{2,\infty}(\Omega) \\
	\eta & \mapsto & X
	\end{array},
	\quad
	\begin{array} {rcl}
		\mathscr{G} & \rightarrow & \mathbf{L}^{\infty}(\Omega) \\
		\eta & \mapsto & \cof (\nabla X)
	\end{array},
	\quad
	\begin{array} {rcl}
		\mathscr{G} & \rightarrow & \mathbf{L}^{\infty}(\Omega) \\
		\eta & \mapsto & \nabla Y(X)
	\end{array},
	\quad 
	\begin{array} {rcl}
	\mathscr{G} & \rightarrow & \mathbf{L}^{\infty}(\Omega) \\
	\eta & \mapsto & \Delta Y(X)
	\end{array}
\end{eqnarray*}
are of class $\mathcal{C}^1$. Here $Y$ denotes the inverse of $X$.
\end{lemma}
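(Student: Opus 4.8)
The plan is to establish $\mathcal{C}^1$-regularity of $\eta \mapsto X$ first, and then to obtain regularity of the three subordinate mappings $\cof(\nabla X)$, $\nabla Y(X)$ and $\Delta Y(X)$ by composition and algebraic manipulation. The starting point is the hypothesis that $\eta \mapsto X_{\mathcal{S}_0}[\eta] \in \mathbf{H}^{2+\epsilon}(\Omega)$ is $\mathcal{C}^1$ with $\epsilon > d/2$, so that by Sobolev embedding $\mathbf{H}^{2+\epsilon}(\Omega) \hookrightarrow \WW^{2,\infty}(\Omega)$ the map $\eta \mapsto X_{\mathcal{S}_0}[\eta]$ is already $\mathcal{C}^1$ into $\WW^{2,\infty}(\Omega)$. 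The construction of $X$ as an extension satisfying~\eqref{sysdet}, taken from \cite{CourtEECT2014, CourtJDDE2015}, is on the transition annulus $\mathcal{S}^{(\rho)}_0 \setminus \overline{\mathcal{S}_0}$ the solution of a prescribed-Jacobian (divergence-type) equation $\det \nabla X = 1$ with boundary data $X = X_{\mathcal{S}_0}[\eta]$ on the inner boundary and $X = \Id$ on the outer one. I would quote that this construction can be carried out by solving a linear elliptic problem whose coefficients and data depend on $X_{\mathcal{S}_0}[\eta]$, and then invoke the implicit function theorem (or the $\mathcal{C}^1$-dependence of solutions of an elliptic PDE on $\mathcal{C}^1$-varying data) to conclude that $\eta \mapsto X \in \WW^{2,\infty}(\Omega)$ is $\mathcal{C}^1$. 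This gives the first mapping in the statement.

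The regularity of $\eta \mapsto \cof(\nabla X)$ in $\mathbf{L}^\infty(\Omega)$ then follows immediately: the entries of $\cof(\nabla X)$ are polynomials (of degree $d-1$) in the entries of $\nabla X$, and since $\eta \mapsto \nabla X$ is $\mathcal{C}^1$ into $\WW^{1,\infty}(\Omega) \hookrightarrow \mathbf{L}^\infty(\Omega)$, and $\mathbf{L}^\infty$ is a Banach algebra, finite products and sums of $\mathcal{C}^1$-maps into $\mathbf{L}^\infty$ are again $\mathcal{C}^1$. For $\eta \mapsto \nabla Y(X)$, I would not differentiate the inverse $Y$ directly; instead I would use the pointwise matrix identity $\nabla Y(X(y)) = (\nabla X(y))^{-1}$, which holds since $Y = X^{-1}$. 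Combined with $(\nabla X)^{-1} = \cof(\nabla X)^T / \det \nabla X$, this expresses $\nabla Y(X)$ purely in terms of $\nabla X$, and the $\mathcal{C}^1$-dependence follows once I know $\det \nabla X$ stays bounded away from $0$ uniformly in $x$ for $\eta$ near $0$ (so that division is a $\mathcal{C}^1$ operation on $\mathbf{L}^\infty$). This uniform lower bound holds because at $\eta = 0$ one has $X = \Id$, hence $\det \nabla X = 1$, and by continuity of $\eta \mapsto \det \nabla X$ into $\mathbf{L}^\infty$ it remains close to $1$ for small $\eta$.

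The delicate mapping, and the main obstacle, is $\eta \mapsto \Delta Y(X)$, because it involves second derivatives of the inverse $Y$ evaluated at $X$, and a priori $Y \in \WW^{2,\infty}$ only gives $\Delta Y \in \mathbf{L}^\infty$, so that $\Delta Y(X)$ is an $\mathbf{L}^\infty$-function composed with the $\WW^{2,\infty}$-diffeomorphism $X$. The key step is to again express everything through $X$ rather than $Y$. Differentiating the identity $\nabla Y(X(y)) \nabla X(y) = \Id$ a second time, or equivalently differentiating $(\nabla X)^{-1}$, yields a pointwise formula in which $\Delta Y(X)$ is a contraction of $(\nabla X)^{-1}$ (appearing to third order) against the second derivatives $\nabla^2 X$, schematically $\Delta Y(X) = - (\nabla X)^{-1} : \nabla^2 X : (\nabla X)^{-1} \otimes (\nabla X)^{-1}$ with appropriate index summation. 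Every factor on the right-hand side is a $\mathcal{C}^1$-map of $\eta$ into $\mathbf{L}^\infty(\Omega)$: the factors $(\nabla X)^{-1}$ by the previous paragraph, and $\nabla^2 X$ because $\eta \mapsto X$ is $\mathcal{C}^1$ into $\WW^{2,\infty}(\Omega)$ and the second-derivative operator $\WW^{2,\infty} \to \mathbf{L}^\infty$ is bounded linear. Since $\mathbf{L}^\infty$ is a Banach algebra, the resulting finite sum of products is $\mathcal{C}^1$ into $\mathbf{L}^\infty(\Omega)$, which is exactly the assertion. Throughout, the only genuine analytic input beyond the algebra of $\mathbf{L}^\infty$ is the $\mathcal{C}^1$-dependence of $X$ on $\eta$, together with the uniform invertibility of $\nabla X$; once those are secured, the remaining three claims reduce to differentiating algebraic expressions in a Banach algebra and are routine.
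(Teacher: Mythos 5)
Your proposal is correct in substance, and its skeleton matches the paper's: first secure the $\mathcal{C}^1$-dependence of the extension $X$ on $\eta$, then obtain the subordinate mappings by algebraic manipulation in the Banach algebra $\mathbf{L}^{\infty}(\Omega)$. Two points of comparison are worth making. First, on the dependence of $X$ on $\eta$: the paper does not treat the construction as an elliptic problem. It differentiates system~\eqref{sysdet} with respect to $\eta$ to obtain~\eqref{sysdeteta}, rewrites the constraint as $\divg\left(\frac{\partial X}{\partial \eta}\right) = \frac{\partial \nabla X}{\partial \eta}:\left(\I - \cof(\nabla X)\right)$, and runs a fixed-point argument for $\eta$ close to $0$, based on the solvability (in the sense of Galdi) of the linear divergence problem~\eqref{sysdiv} together with the $\mathbf{H}^{2+\epsilon}\hookrightarrow \WW^{2,\infty}$ estimate. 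Your implicit-function-theorem formulation can be made to work and is morally the same thing, but your description of the underlying problem as a ``linear elliptic problem'' is inaccurate: the prescribed-Jacobian equation is nonlinear, and its linearization is an \emph{underdetermined} first-order divergence system (Bogovskii/Galdi theory, not elliptic regularity), whose solvability requires the compatibility condition~\eqref{condsysdet} (which you do not mention) and whose solutions are not unique, so you would need the surjective (right-inverse) form of the implicit function theorem rather than the standard one --- this is exactly what the paper's fixed point supplies by hand.

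Second, and to your credit: for $\eta \mapsto \Delta Y(X)$, which the paper declares ``more tricky'' and outsources entirely to Lemma~3 of~\cite{CourtJDDE2015}, you give a self-contained argument, and it is correct. Differentiating $\nabla Y(X)\,\nabla X = \I$ yields, componentwise,
\begin{equation*}
\Delta Y_i(X) \;=\; -\sum_{j,k}\Big[(\nabla X)^{-1}\big(\partial_k \nabla X\big)(\nabla X)^{-1}\Big]_{ij}\Big[(\nabla X)^{-1}\Big]_{kj},
\end{equation*}
a contraction of three copies of $(\nabla X)^{-1}$ against $\nabla^2 X$. Each factor is a $\mathcal{C}^1$ map of $\eta$ into $\mathbf{L}^{\infty}(\Omega)$: the inverses by your cofactor/determinant argument together with the uniform lower bound on $\det \nabla X$ for $\eta$ near $0$, and $\nabla^2 X$ because $\eta \mapsto X \in \WW^{2,\infty}(\Omega)$ is $\mathcal{C}^1$ and second differentiation is bounded linear into $\mathbf{L}^{\infty}(\Omega)$; the Banach-algebra property then closes the argument. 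This replaces the paper's external citation by an explicit computation and is arguably the cleaner route for that piece, so the only genuine repair your write-up needs is the more careful treatment of the extension problem described above.
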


\begin{proof}
We sketch the proof, and for more details we invite the reader to refer to~\cite[Lemma~9]{CourtJDDE2015}, where the time variable plays the role of $\eta$. Indeed, since $\eta \in \mathscr{G}$ is of finite dimension over $\R$, we can reduce the proof to the case $\eta \in \R$. Deriving system~\eqref{sysdet} with respect to $\eta$, we get the following
system:
\begin{eqnarray}
\left\{ \begin{array} {lcl}
\displaystyle \frac{\p \nabla X}{\p \eta}: \cof(\nabla X) = 0 & & \text{in } \mathcal{S}^{(\rho)}_0 \setminus \overline{\mathcal{S}_0}, \\[10pt]
\displaystyle \frac{\p X}{\p \eta} = \frac{\p X_{\mathcal{S}_0}[\eta]}{\p \eta} & & \text{on } \overline{\mathcal{S}_0},\\[10pt]
\displaystyle \frac{\p X}{\p \eta} \equiv 0 & & \text{in } \overline{\Omega} \setminus \mathcal{S}^{(\rho)}_0. \\
\end{array} \right. \label{sysdeteta}
\end{eqnarray}
The mapping $A \mapsto \cof A$ is the differential of the mapping $A \mapsto \det A$. The first equation of~\eqref{sysdeteta} is rewritten as 
\begin{eqnarray*}
\divg\left(\frac{\p X}{\p \eta} \right) & = & 
\frac{\p \nabla X}{\p \eta}: \left(\I - \cof(\nabla X)\right),
\end{eqnarray*}
and next system~\eqref{sysdeteta} is solved with a fixed point method, assuming that $\eta$ is close to $0$, in order to consider $\left(\I - \cof(\nabla X)\right)$ small enough. The proof relies on the study of the following linear divergence problem
\begin{eqnarray}
\left\{ \begin{array} {lcl}
\divg v = f & &
\text{in } \mathcal{S}^{(\rho)}_0 \setminus \overline{\mathcal{S}_0}, \\
v = v_{\mathcal{S}_0} & & \text{on } \overline{\mathcal{S}_0},\\
v \equiv 0 & & \text{in } \overline{\Omega} \setminus \mathcal{S}^{(\rho)}_0, \\
\end{array} \right. \label{sysdiv}
\end{eqnarray}
for which the unknown $v$ represents $\frac{\p X}{\p \eta}$, and the data are $v_{\mathcal{S}_0} := \frac{\p X_{\mathcal{S}_0}[\eta]}{\p \eta}$ and $f$. The compatibility condition condition for this divergence system is equivalent to~\eqref{condsysdet}. From~\cite{Galdi}\footnote{The non-homogeneous Dirichlet condition on $X_{\mathcal{S}_0}(\overline{\mathcal{S}_0})$ can be lifted as in~\cite{Galdi}, Theorem 3.4 of Chapter II, and the resolution made by using Exercise 3.4 and Theorem 3.2 of Chapter III.}, this system admits a solution which obeys for all $\eta \in \R$
\begin{eqnarray*}
\| v\|_{\mathbf{W}^{2,\infty}(\Omega)} \leq
C\| v\|_{\mathbf{H}^{2+\epsilon}(\Omega)}
 & \leq &
C\| v_{\mathcal{S}_0}\|_{\mathbf{H}^{2+\epsilon}(X_{\mathcal{S}_0}(\mathcal{S}_0))},
\end{eqnarray*}
where the constant $C>0$ is generic and depends only on $\Omega$. 
Since $X_{\mathcal{S}_0}$ is $\mathcal{C}^1$ with respect to $\eta$, $v_{\mathcal{S}_0}$ is continuous with respect to $\eta$, and so $v$ too. Deducing the regularity for $\cof (\nabla X)$ is direct because $A\mapsto \cof A$ is a polynomial form, and also for $\nabla Y(X) = \left(\nabla X\right)^{-1}$, because of the formula $A^{-1} = (\det A)^{-1} \cof A^T$ (actually we even have $\cof (\nabla X)$,~$\nabla Y(X) \in \mathbf{W}^{1,\infty}(\Omega)$). 
The regularity for $\Delta Y(X)$ is more tricky, and we refer to Lemma~3 of~\cite{CourtJDDE2015} for the idea of the proof.
\end{proof}

The mapping $X$ can be assumed to be close to the identity, which relates to assuming that $\Gamma[\eta]$ is a small perturbation of $\Gamma_0$. In practice, we shall not rely on the construction of a solution of system~\eqref{sysdet}. It is required for theoretical purpose only, see section~\ref{rk-trick2}. We refer to section~\ref{sec-cv2D} for the  practical details.\\


The following lemma will be used several times in the rest of the paper. It enables in particular to transform the expression of the state equation.
\begin{lemma} \label{lemma-divcof}
Let $X \in \mathcal{C}^1(\R^d;\R^d)$ be a diffeomorphism whose inverse is denoted by $Y$, and let $K  :\R^d \rightarrow \R^{k\times d}$ be such that $K, \ K\circ X \in [\L^2(\Omega)]^{k\times d}$. 
Then we have in the sense of distributions
\begin{eqnarray*}
	\divg(K)\circ X & = &
	\frac{1}{\det \nabla X}\divg\big((K\circ X)\cof(\nabla X)\big). \label{eqdivcof}
\end{eqnarray*}
\end{lemma}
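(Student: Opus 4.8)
The plan is to prove this identity, a version of the Piola transformation formula, by testing against an arbitrary smooth compactly supported test function and manipulating the resulting integrals with the change of variables $x = X(y)$. Let me sketch the approach.

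\paragraph{Approach via duality and the change of variables.}
Since the claimed identity is to hold in the sense of distributions, I would fix a test function $\varphi \in \mathcal{C}^\infty_c(\Omega;\R^k)$ and aim to show that the pairing of both sides against $\varphi$ agree. The right-hand side involves a divergence, so I would integrate by parts to move the derivative onto $\varphi$:
\begin{eqnarray*}
\left\langle \frac{1}{\det \nabla X}\divg\big((K\circ X)\cof(\nabla X)\big), \varphi \right\rangle
= - \int_\Omega (K\circ X)\cof(\nabla X) : \nabla\!\left(\frac{\varphi}{\det \nabla X}\right) \d y.
\end{eqnarray*}
Simultaneously, the left-hand side is $\int_\Omega (\divg K)\circ X \cdot \varphi \, \d y$, which after the substitution $x = X(y)$, $\d x = \det \nabla X \, \d y$, becomes $\int_\Omega \divg K(x) \cdot \varphi(Y(x)) \frac{1}{\det \nabla X(Y(x))} \d x$, and integrating by parts in $x$ transfers the divergence onto the transported test function. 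The goal is then to reconcile these two expressions using the chain rule for $\nabla_x(\varphi\circ Y)$ together with the relation $\cof(\nabla X) = (\det \nabla X)(\nabla X)^{-T}$.

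\paragraph{Key algebraic step.}
The heart of the matter is the classical Piola identity $\divg_y\!\big(\cof(\nabla X)\big) = 0$, understood columnwise, which holds for any $\mathcal{C}^2$ (or here $\W^{2,\infty}$) diffeomorphism. I would use it to discard the term in which the derivative falls on $\cof(\nabla X)$ when expanding the divergence on the right-hand side. Combined with the chain rule identity $\nabla_x = (\nabla X)^{-T}\nabla_y$ pulled back appropriately, this is exactly what converts $(\divg_x K)\circ X$ into $\frac{1}{\det \nabla X}\divg_y\big((K\circ X)\cof(\nabla X)\big)$. Concretely, writing out components and using $\cof(\nabla X)_{ij} = \det(\nabla X)\,(\nabla X)^{-1}_{ji}$, the factor $(\nabla X)^{-1}$ supplies precisely the Jacobian of the change of variables applied to the derivatives, while the Piola identity guarantees no spurious zeroth-order term survives.

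\paragraph{Regularity and the main obstacle.}
The main difficulty is not the formal computation but ensuring all steps are legitimate under the stated low regularity: $K$ and $K\circ X$ are only assumed to lie in $[\L^2(\Omega)]^{k\times d}$, so $\divg K$ exists only distributionally, and $\cof(\nabla X) \in \W^{1,\infty}$ by Lemma~\ref{lemma-Xeta}. I would handle this by a density argument: first establish the identity for smooth $K$, where all manipulations above are classical, and then pass to the limit, observing that both sides depend continuously on $K$ in the appropriate topology (the right-hand side makes sense distributionally precisely because $\cof(\nabla X)$ and $\frac{1}{\det\nabla X}$ are $\W^{1,\infty}$ multipliers). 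The Piola identity itself must be justified for $\W^{2,\infty}$ maps rather than $\mathcal{C}^2$ maps, which again follows by approximation since it is a pointwise-a.e. polynomial relation among second derivatives of $X$. I expect verifying that the distributional pairing on the right-hand side is well-defined, and that the boundary terms from the two integrations by parts genuinely vanish (they do, because $\varphi$ has compact support in $\Omega$ and $X(\supp\varphi) \subset\subset \Omega$), to be the most delicate bookkeeping, but no deep new idea is required beyond the Piola identity and the substitution rule.
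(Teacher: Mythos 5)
Your overall frame (test against $\varphi \in \mathcal{C}^\infty_c(\Omega;\R^k)$, change variables $x = X(y)$, integrate by parts, note that compact supports kill all boundary terms) is the same as the paper's, but the step you call the heart of the matter is where your argument fails against the stated hypotheses. The lemma assumes only $X \in \mathcal{C}^1(\R^d;\R^d)$, whereas your route expands $\divg\big((K\circ X)\cof(\nabla X)\big)$ by the product rule and discards the term where the derivative falls on $\cof(\nabla X)$ via the Piola identity; this presupposes that $\cof(\nabla X)$ can be differentiated, i.e.\ that $X$ has second derivatives in some usable sense, which a $\mathcal{C}^1$ map need not have. Your supporting citations do not repair this: Lemma~\ref{lemma-Xeta} gives $\cof(\nabla X) \in \mathbf{W}^{1,\infty}$ only for the particular extension $X$ constructed from system~\eqref{sysdet}, which is $\mathbf{W}^{2,\infty}$ by construction, not for a general $\mathcal{C}^1$ diffeomorphism as in the lemma; and your fallback of ``justifying Piola for $\W^{2,\infty}$ maps by approximation'' again assumes $\W^{2,\infty}$, which is not among the hypotheses. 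The Piola identity is in fact true distributionally for $\mathcal{C}^1$ maps (mollify $X$, use uniform convergence of cofactors on compacts, pass to the limit), but you never prove it; and even granting it, the product-rule splitting requires care, since the factor $K\circ X$ is only $\L^2$ — which is exactly why your plan needs the smooth-$K$-first-then-density detour, with the Piola-at-$\mathcal{C}^1$ step still owed.

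The paper's proof shows that the Piola identity is simply not needed, and that is precisely how it survives at $\mathcal{C}^1$ regularity and for $\L^2$ data: it computes $\int_\Omega \varphi\cdot\divg(K)\,\d x$ twice — once directly by change of variables, once by first integrating by parts onto $\varphi$, then changing variables, then using the chain rule $(\nabla\varphi)\circ X = \nabla(\varphi\circ X)\,\nabla Y(X)$ together with the purely algebraic identity $(\det \nabla X)\,\nabla Y(X)^T = \cof(\nabla X)$ to assemble $(K\circ X)\cof(\nabla X)$ as an \emph{undifferentiated} block, and finally integrating by parts back onto $\varphi\circ X$. A derivative never lands on $\cof(\nabla X)$ or on $K$: only the $\mathcal{C}^1$ test function is ever differentiated, so no second derivatives of $X$, no Piola identity, and no density argument in $K$ are required. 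If you restructure your computation so that the derivative is always carried by the test function rather than expanding the divergence pointwise, your argument collapses onto the paper's, and the regularity obstacle you correctly identified as the main difficulty disappears rather than having to be overcome.
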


\begin{proof}
	For all $\varphi \in \mathcal{C}^1_c(\Omega;\R^k)$, on the one hand we calculate, by change of variables
	\begin{eqnarray*}
	\int_{\Omega} \varphi \cdot \divg(K) \d x & = &
	\int_{\Omega} (\det \nabla X) \big((\varphi \circ X) \cdot \divg(K) \circ X \big)\d y.
	\end{eqnarray*}
	On the other hand, by integration by parts and by change of variables, the same quantity is expressed as
	\begin{eqnarray*}
	\int_{\Omega} \varphi \cdot \divg(K) \d x & = & - \int_{\Omega}\nabla \varphi : K \d x \\
	& = & - \int_{\Omega}(\det \nabla X) \big((\nabla \varphi) \circ X\big) : (K\circ X) \d y \\
	& = &
- \int_{\Omega}(\det \nabla X) \big(\nabla (\varphi\circ X) \nabla Y(X)\big) : (K\circ X) \d y  \\
	& = &
- \int_{\Omega}(\det \nabla X) \nabla (\varphi\circ X)  : \big((K\circ X)\nabla Y(X)^T\big) \d y \\
	& = &
	- \int_{\Omega} \nabla (\varphi\circ X)  : \big((K\circ X)\cof(\nabla X)\big) \d y \\
	& =  & \int_{\Omega} (\varphi\circ X)  \cdot \divg\big((K\circ X)\cof(\nabla X)\big) \d y.
	\end{eqnarray*}
	Thus, for all $\varphi \in \mathcal{C}^1_c(\Omega;\R^k)$, we have
	\begin{eqnarray*}
	\int_{\Omega} (\varphi \circ X) \cdot \divg\big((K\circ X)\cof(\nabla X)\big) \d y & =  &
	\int_{\Omega} (\varphi \circ X) \cdot \big( (\det \nabla X)(\divg(K) \circ X) \big)\d y,
	\end{eqnarray*}
	which concludes the proof.
\end{proof}

Now denote $\tilde{u}(y,t) = u(X(y),t)$, and keep in mind that $F \in C^2( \R^k; \R^{k\times d})$. One of the consequences of the lemma above is the identity
\begin{eqnarray}
	\divg(F(u))\circ X & = &
	\frac{1}{\det \nabla X}\divg(F(\tilde{u})\cof(\nabla X)), \label{eqdivcofmain}
\end{eqnarray}
which holds here in the strong sense.

\paragraph{Transformation of the state equation.}
Assume that $u$ satisfies~\eqref{mainsysreg}. We make the change of unknowns
\begin{eqnarray}
	\tilde{u}(y,t) := u(X(y),t), & & u(x,t) = \tilde{u}(Y(x),t), \label{change} \\
	\tilde{\xi}(y,t) := \det(\nabla X(y))\xi(X(y),t), & &
	\xi(x,t) = \frac{1}{\det(\nabla X(Y(x)))}\tilde{\xi}(Y(x),t).
\end{eqnarray}
We introduce the operator $\mathbf{L}\tilde{u} := (\Delta u )\circ X$. By Lemma~\ref{lemma-divcof} applied with $K = \nabla u$, and with the chain rule we can calculate for the $i$-th component
\begin{eqnarray}
[\mathbf{L}\tilde{u}]_i = [\divg (\nabla u) ]_i\circ X & = &
\frac{1}{\det \nabla X}\divg\big( \nabla \tilde{u} \nabla Y(X) \cof(\nabla X)\big)_i, \nonumber \\
& = &
[\nabla \tilde{u}\Delta Y(X)]_i + \nabla^2 \tilde{u}_i : \left(\nabla Y(X) \nabla Y(X)^T \right).
\label{exp-opL}
\end{eqnarray}
For more details, one refers to the calculations of~\cite[section~3.2]{CourtJDDE2015}. From the regularity given in~\eqref{regXY}, the identity~\eqref{eqdivcofmain} and the one above, a function $u$ satisfies~\eqref{mainsys} if and only if $\tilde{u}$ given by~\eqref{change} satisfies the following problem:
	\begin{eqnarray} \label{mainsysprime}
	\left\{ \begin{array} {rcl}
	\displaystyle   \dot{\tilde{u}} - \kappa \mathbf{L} \tilde{u} +
	\frac{1}{\det \nabla X}\divg(F(\tilde{u})\cof(\nabla X)) = \frac{1}{\det \nabla X}B\tilde{\xi}
	 & & \text{in } \Omega \times (0,T), \\[10pt]
	\displaystyle  \tilde{u} = 0 & & \text{on } \p \Omega \times (0,T), \\
	\displaystyle \tilde{u}(\cdot,0) = u_0 \circ X & & \text{in } \Omega.
	\end{array} \right.
	\end{eqnarray}

We denote $\tilde{u}_0 = u_0 \circ X$. In the rest of the paper, for practical purposes, we will always avoid to solve directly system~\eqref{mainsysprime}.

\subsection{Optimal control problem}
Let $\mathscr{G} = \R^m$, and as in the previous subsection $\Gamma[\eta]  \subset \Omega $ denotes  a family of parameterized submanifolds of codimension $1$ in $\R^d$.
For every $\eta \in \mathscr{G}$, system~\eqref{sysdet} defines in $\mathbf{W}^{2,\infty}(\Omega)$ a Lagrangian mapping $X[\eta] : \R^d \rightarrow \R^d$, such that $\Gamma[\eta] = \{ X[\eta](y)| \ y \in \Gamma_0\}$. \\

We are now in position to specify the optimal control problem:


\begin{equation} \label{mainpb} \tag{$\mathcal{P}$}
\left\{ \begin{array} {l}
\displaystyle \max_{(\xi,\eta)\in \mathscr{C} \times \mathscr{G}}
-\frac{\alpha}{2}\int_0^T \| \xi \|_{\LL^2(\omega)}^2\d t + \delta_{\Gamma[\eta]} \ast \phi(u(\cdot, T)), \\
\displaystyle \text{subject to system~\eqref{mainsys0} and }
G(\xi) := \int_0^T \|\xi\|^p_{\LL^p(\omega)} \d t - c \leq 0.
\end{array} \right.
\end{equation}

The terminal cost functional is supposed to be a point-wise operator such that the associated substitution operator satisfies $\phi \in \mathcal{C}^1(\mathscr{U}_0; \mathcal{C}(\overline{\Omega};\R))$\footnote{From~\cite[Appendix-2]{BB1974}, this assumption is fulfilled if $\phi$ is represented by a smooth point-wise function from $\R^k$ to $\R$.}.
When $d=1$ the set $\Gamma[\eta]$ is reduced to $\{\eta \}$. The operation $\delta_{\Gamma[\eta]} \ast  \varphi$, for $\varphi \in \mathcal{C}(\overline{\Omega})$, is defined as follows:
\begin{eqnarray}
	\delta_{\Gamma[\eta]} \ast \varphi  & = & \left\{ \begin{array} {ll}
	\varphi(\eta) & \text{if } d=1, \\[5pt]
	\displaystyle \int_{\Gamma[\eta]} \varphi(x) \, \d \Gamma[\eta](x) &
	\text{if } d >1.
	\end{array} \right.	\label{eqconvo}
\end{eqnarray}
In dimension $d >1$, from~\cite[Lemma~6.23, p.~135]{Allaire}, we can transform this expression into
\begin{eqnarray} \label{formula-Allaire}
\delta_{\Gamma[\eta]} \ast \varphi  & = &
\int_{\Gamma_0} \varphi(X[\eta](y))|(\cof \nabla X[\eta]) n|_{\R^d}(y) \, \d \Gamma_0(y),
\end{eqnarray}
where $n$ denotes the unit normal on $\Gamma_0$, and $\cof \nabla X[\eta]$ is the cofactor matrix of $\nabla X[\eta]$.
Note that a sense has to be given to the trace on $\Gamma[\eta]$ of the function $u(\cdot, T)$. In our case this is well-defined, since the space $\mathscr{U}$ is embedded into $\mathcal{C}([0,T];\mathcal{C}(\overline{\Omega}))$.\\

From Problem~\eqref{mainpb}, we define the new optimal control problem as
\begin{equation}
\left\{ \begin{array} {l}
\displaystyle \max_{(\tilde{\xi},\eta) \in \mathscr{C} \times \mathscr{G}} \tilde{J}(\tilde{u},\tilde{\xi},\eta) \\
\text{subject to system \eqref{mainsysprime} and }
G(\tilde{\xi}) = \displaystyle \int_0^T \|\tilde{\xi}\|^p_{\LL^p(\omega)} \d t - c \leq 0.
\end{array} \right.
\label{pbprime} \tag{$\tilde{\mathcal{P}}$}
\end{equation}
where, using the identity~\eqref{formula-Allaire}, we denote
\begin{eqnarray*}
	\tilde{J}(\tilde{u},\tilde{\xi},\eta) & = &
	- \frac{\alpha}{2} \int_0^T \int_{\omega}
	\frac{|\tilde{\xi}|_{\R^l}^2}{\det \nabla X[\eta]}\d y \, \d t
	+ \int_{\Gamma_0} \phi(\tilde{u}(y,T))|(\cof \nabla X[\eta]) n|_{\R^d}(y) \, \d \Gamma_0(y).
\end{eqnarray*}
To simplify this expression, the choice of $\Gamma_0$ and the choice of the parameterization can be made such that we have identically $|(\cof \nabla X[\eta]) n|_{\R^d}(y) \equiv 1$ on $\Gamma_0$. A sufficient condition for this to hold is given in section~\ref{sec-cv2D}. Moreover, since $X[\eta]\equiv \Id$ in $\omega$, we have $\det \nabla X[\eta] \equiv 1$ in $\omega$. This is the reason why the norm constraint in~\eqref{pbprime} is the same for $\tilde{\xi}$, since actually $\|\tilde{\xi}\|_{\LL^p(\omega)} = \|\xi\|_{\LL^p(\omega)}$. However, we may consider throughout $\det \nabla X[\eta] \neq 1$ in $\omega$, because actually in practice it is possible to solve this problem in the general case where $\omega$ and $\Gamma[\eta]$ are not disjoint. Thus the expression of $\tilde{J}$ reduces to
\begin{eqnarray}
	\tilde{J}(\tilde{u},\tilde{\xi},\eta) & = &
	- \frac{\alpha}{2} \int_0^T
	\int_{\omega} \frac{|\tilde{\xi}|_{\R^l}^2}{\det \nabla X[\eta]}\d y \, \d t
	+ \int_{\Gamma_0} \phi(\tilde{u}(y,T)) \, \d \Gamma_0(y). \label{defJtilde}
\end{eqnarray}
Note that the interest of this new problem lies in the fact that the expression of the  terminal cost does not depend explicitly on the parameter $\eta$ anymore. Moreover, $\tilde{J}$ is differentiable with respect to $\eta$, unlike the cost appearing in~\eqref{mainpb}.

\subsection{On the control-to-state mapping} \label{sec-cts}

In this section we summarize properties of the control-to-state mapping 
\begin{eqnarray}
	\begin{array} {rlcl}
\mathbb{S} : &  \mathscr{C}_c\times \mathscr{G} & \rightarrow & \mathscr{U} \\
 & (\tilde{\xi},\eta) & \mapsto & \tilde{u},
 \end{array} \label{ctsm}
\end{eqnarray}
which for $(\tilde{\xi}, \eta) \in \mathscr{C} \times \mathscr{G}$ assigns the solution $\tilde{u}$ of the following system:
\begin{eqnarray} \label{mainsysregprime}
\left\{ \begin{array} {rcl}
\displaystyle  \dot{\tilde{u}} - \kappa \mathbf{L}[\eta] \tilde{u} +
\pi[\eta]\divg [\eta](F(\tilde{u})) = \pi[\eta]B\tilde{\xi}
& & \text{in } \Omega \times (0,T), \\
\displaystyle  \tilde{u} = 0 & & \text{on } \p \Omega \times (0,T), \\
\displaystyle \tilde{u}(\cdot,0) = \tilde{u}_0  & & \text{in } \Omega.
\end{array} \right.
\end{eqnarray}
Here $\mathbf{L}[\eta]$ denotes  the operator $\mathbf{L}$ introduced in~\eqref{exp-opL} emphasizing its dependence on  $\eta$, and 
\begin{eqnarray*}
	\pi[\eta] := \frac{1}{\det \nabla X[\eta]}, & &
	\divg [\eta](F(\tilde{u})) := \divg(F(\tilde{u})\cof \nabla X[\eta]).
\end{eqnarray*}
The existence and uniqueness of a solution $\tilde{u}$ for system~\eqref{mainsysregprime} is a consequence of the existence and uniqueness of a solution $u$ for system~\eqref{mainsysreg}, which was discussed at the end of section \ref{sec-change}. The corresponding linearized system is given by
\begin{eqnarray} \label{mainsysreglinprime}
\left\{ \begin{array} {rcl}
\displaystyle  \dot{\tilde{v}} - \kappa \mathbf{L}_{\tilde{u}}[\eta](\tilde{u}). \tilde{v} +
\pi[\eta]\divg[\eta](F'(\tilde{u}).\tilde{v}) = \tilde{f} & & \text{in } \Omega \times (0,T), \\
\displaystyle  \tilde{v} = 0 & & \text{on } \p \Omega \times (0,T), \\
\displaystyle \tilde{v}(\cdot,0) = \tilde{v}_0  & & \text{in } \Omega.
\end{array} \right.
\end{eqnarray}
By defining $v(x,t)  =  \tilde{v}(Y(x),t)$
we can verify once again that $\tilde{v}$ is solution of system~\eqref{mainsysreglinprime} if and only if $v$ is solution of system~\eqref{mainsysreglin} with $f(x,t) = \tilde{f}(Y(x),t)$.
Then the existence results of Proposition~\ref{prop01} and Proposition~\ref{prop02} lead to the following result.

\begin{proposition} \label{prop-sumup}
\hfill
\begin{itemize}
\item[(i)] For all $\tilde{u}_0 \in \mathscr{U}_0$ and $\tilde{\xi} \in \mathscr{C}_c$, there exists $T>0$, depending only on $\kappa>0$, $\tilde{u}_0$ and $c$, such that system~\eqref{mainsysregprime} admits a unique solution $\tilde{u} \in \mathscr{U}$.
\item[(ii)] Let be $T>0$ and $\kappa >0$. Assume that $\tilde{u}\in \mathscr{U}$, $\tilde{f} \in \mathscr{F}$ and $\tilde{v}_0 \in \mathscr{U}_0$. Then system~\eqref{mainsysreglinprime} admits a unique solution $\tilde{v}\in \mathscr{U}$.
\end{itemize}
\end{proposition}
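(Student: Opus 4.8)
The plan is to deduce Proposition~\ref{prop-sumup} directly from the already-established well-posedness results for the untransformed systems, namely Proposition~\ref{prop01} and Proposition~\ref{prop02}, by exploiting the equivalence between the original and transformed equations recorded in the preceding sections. The central observation, valid for a fixed admissible $\eta \in \mathscr{G}$, is that the change of variables $\tilde{u}(y,t) = u(X[\eta](y),t)$ together with $\tilde{\xi}(y,t) = \det(\nabla X[\eta](y))\,\xi(X[\eta](y),t)$ sets up a one-to-one correspondence between solutions of the transformed systems and solutions of the original ones. This is precisely what was verified in the transformation of the state equation leading to~\eqref{mainsysprime}, and, for the linearized system, in the remark immediately preceding the statement (where $v(x,t) = \tilde{v}(Y[\eta](x),t)$ solves~\eqref{mainsysreglin} with $f(x,t) = \tilde{f}(Y[\eta](x),t)$ if and only if $\tilde{v}$ solves~\eqref{mainsysreglinprime}). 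So the proof is essentially a transport-of-structure argument: existence and uniqueness upstairs is inherited from existence and uniqueness downstairs.

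For part~(i), I would fix $\eta$ and set $u_0 = \tilde{u}_0 \circ Y[\eta]$, observing that the correspondence $u \mapsto \tilde{u}$ and $\xi \mapsto \tilde{\xi}$ preserves membership in the relevant spaces. The key point is that, by the regularity~\eqref{regXY} of $X[\eta]$ and $Y[\eta]$ in $\mathbf{W}^{2,\infty}(\Omega)$ guaranteed by Lemma~\ref{lemma-Xeta}, composition with $X[\eta]$ (respectively $Y[\eta]$) is a bounded linear isomorphism of $\mathscr{U}$ onto itself and of $\mathscr{U}_0$ onto itself, and likewise multiplication by the bounded, bounded-away-from-zero factor $\det \nabla X[\eta]$ is an isomorphism of $\mathscr{C}$. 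Since $\tilde{u}_0 \in \mathscr{U}_0$ and $\tilde{\xi} \in \mathscr{C}_c$ force $u_0 \in \mathscr{U}_0$ and $\xi$ in a rescaled ball of $\mathscr{C}$, Proposition~\ref{prop01} furnishes a unique $u \in \mathscr{U}$ on some interval $(0,T)$ with $T$ depending only on $\kappa$, $u_0$ and $c$; transporting back via $\tilde{u} = u \circ X[\eta]$ yields the desired unique $\tilde{u} \in \mathscr{U}$. Uniqueness follows because the correspondence is a bijection: two solutions $\tilde{u}$ would produce two solutions $u$, contradicting Proposition~\ref{prop01}.

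Part~(ii) is handled identically but more cleanly, since system~\eqref{mainsysreglin} is linear and Proposition~\ref{prop02} gives global-in-time existence on any prescribed $(0,T)$. Given $\tilde{u} \in \mathscr{U}$, I would set $u = \tilde{u} \circ Y[\eta] \in \mathscr{U}$, $f = \tilde{f} \circ Y[\eta] \in \mathscr{F}$ and $v_0 = \tilde{v}_0 \circ Y[\eta] \in \mathscr{U}_0$; Proposition~\ref{prop02} produces a unique $v \in \mathscr{U}$ solving~\eqref{mainsysreglin}, and $\tilde{v} = v \circ X[\eta]$ is then the unique solution of~\eqref{mainsysreglinprime} in $\mathscr{U}$, again by bijectivity of the change of variables.

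The one genuinely non-routine point — the part I would expect to be the main obstacle, and would treat carefully rather than wave away — is the verification that the composition operators $w \mapsto w \circ X[\eta]$ and $w \mapsto w \circ Y[\eta]$ actually map $\mathscr{U}$ boundedly into itself and preserve the trace space $\mathscr{U}_0$. The space $\mathscr{U}$ involves second spatial derivatives in $\mathbf{L}^p$ and a time derivative in $\mathbf{L}^p$, so one must control $\nabla^2(\tilde{u}\circ Y)$ via the chain rule, which brings in $\nabla^2 Y$ and $(\nabla Y)^{\otimes 2}$; this is exactly where the $\mathbf{W}^{2,\infty}$ regularity of $X[\eta]$ and $Y[\eta]$ from~\eqref{regXY} is indispensable, and where one should also note that $\det \nabla X[\eta]$ is bounded away from zero so that the reciprocal factors $\pi[\eta]$ and the cofactor matrices remain in $\mathbf{L}^\infty$ (indeed $\mathbf{W}^{1,\infty}$, by Lemma~\ref{lemma-Xeta}). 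Once the boundedness and invertibility of these operators is in hand, the equivalence of the two formulations together with Propositions~\ref{prop01} and~\ref{prop02} closes both parts of the argument.
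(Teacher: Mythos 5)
Your proposal is correct and follows essentially the same route as the paper: the paper also obtains Proposition~\ref{prop-sumup} by transporting the well-posedness results of Proposition~\ref{prop01} and Proposition~\ref{prop02} through the change of variables $\tilde{u}(y,t)=u(X[\eta](y),t)$, using the equivalence of the transformed and original systems established in sections~\ref{sec-change} and~\ref{sec-cts} together with the $\mathbf{W}^{2,\infty}$ regularity of $X[\eta]$ and $Y[\eta]$. Your explicit verification that composition with $X[\eta]$ and $Y[\eta]$ is an isomorphism of $\mathscr{U}$ and $\mathscr{U}_0$ is exactly the point the paper treats more briefly (deferring to~\eqref{regXY} and the cited lemmas), so the two arguments coincide in substance.
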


These results allow us to verify the following regularity result.

\begin{theorem} \label{thCtS}
Let $ T$ be as in Proposition~\ref{prop-sumup} (i). Then the control-to-state mapping $\mathbb{S}$ is of class $\mathcal{C}^1$.
\end{theorem}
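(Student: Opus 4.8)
The plan is to deduce the $\mathcal{C}^1$ regularity of $\mathbb{S}$ from the Implicit Function Theorem in Banach spaces. I introduce the constraint map
\[
e : \mathscr{U} \times \mathscr{C} \times \mathscr{G} \longrightarrow \mathscr{F} \times \mathscr{U}_0,
\]
\[
e(\tilde{u},\tilde{\xi},\eta) := \left( \dot{\tilde{u}} - \kappa \mathbf{L}[\eta]\tilde{u} + \pi[\eta]\,\divg[\eta](F(\tilde{u})) - \pi[\eta]B\tilde{\xi}, \ \tilde{u}(\cdot,0) - \tilde{u}_0 \right),
\]
where the trace $\tilde{u}(\cdot,0)$ makes sense through the embedding $\mathscr{U} \hookrightarrow C([0,T];\mathscr{U}_0)$. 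By Proposition~\ref{prop-sumup}(i), for every $(\tilde{\xi},\eta) \in \mathscr{C}_c \times \mathscr{G}$ there is a unique $\tilde{u} \in \mathscr{U}$ with $e(\tilde{u},\tilde{\xi},\eta) = 0$, namely $\tilde{u} = \mathbb{S}(\tilde{\xi},\eta)$. It therefore suffices to check that $e$ is of class $\mathcal{C}^1$ and that its partial differential $\partial_{\tilde{u}} e$ is a topological isomorphism at every point; the Implicit Function Theorem then yields a local $\mathcal{C}^1$ resolution which, by uniqueness, coincides with $\mathbb{S}$.

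First I would verify that $e$ is well defined and $\mathcal{C}^1$. The terms $\dot{\tilde{u}}$ and $\tilde{u}(\cdot,0)$ are bounded linear in $\tilde{u}$, hence smooth. The map $\eta \mapsto (\pi[\eta], \cof\nabla X[\eta], \nabla Y(X)[\eta], \Delta Y(X)[\eta])$ is $\mathcal{C}^1$ into $\mathbf{L}^{\infty}(\Omega)$ (and into $\mathbf{W}^{1,\infty}(\Omega)$ for the first three) by Lemma~\ref{lemma-Xeta}, using that $\det \nabla X[\eta]$ stays bounded away from zero for $X$ close to the identity, so that $x \mapsto 1/x$ lifts to a smooth map on the Banach algebra $\mathbf{L}^{\infty}$. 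In view of the expression~\eqref{exp-opL}, $\mathbf{L}[\eta]\tilde{u}$ is a sum of products of these $\mathcal{C}^1$ coefficients with the linear terms $\nabla \tilde{u}$ and $\nabla^2 \tilde{u}$, which belong to $\mathscr{F}$; since multiplication $\mathbf{L}^{\infty} \times \mathscr{F} \to \mathscr{F}$ is bounded bilinear, the corresponding contribution to $e$ is $\mathcal{C}^1$ jointly in $(\tilde{u},\eta)$. For the nonlinear term, the hypothesis $F \in \mathcal{C}^2$ guarantees that the substitution operator $\tilde{u} \mapsto F(\tilde{u})$ is $\mathcal{C}^1$ from $\L^p(0,T;\mathbf{W}^{1,p}(\Omega))$ to itself with differential $\tilde{v} \mapsto F'(\tilde{u}).\tilde{v}$ (see~\cite{BB1974}); composing with the bounded linear operations $K \mapsto \divg(K\,\cof\nabla X[\eta])$ and multiplication by $\pi[\eta]$, and using again the $\mathcal{C}^1$ dependence of the coefficients on $\eta$, shows that $\pi[\eta]\divg[\eta](F(\tilde{u}))$ is $\mathcal{C}^1$ from $\mathscr{U}\times\mathscr{G}$ to $\mathscr{F}$. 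The source term $\pi[\eta]B\tilde{\xi}$ is of bilinear type in $(\tilde{\xi},\eta)$ and $\mathcal{C}^1$ for the same reasons, and if $\tilde{u}_0 = u_0\circ X[\eta]$ is taken to depend on $\eta$, its $\mathcal{C}^1$ regularity into $\mathscr{U}_0$ again follows from Lemma~\ref{lemma-Xeta}.

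It remains to show that, at any $(\tilde{u},\tilde{\xi},\eta)$, the continuous linear operator $\partial_{\tilde{u}} e(\tilde{u},\tilde{\xi},\eta) : \mathscr{U} \to \mathscr{F}\times\mathscr{U}_0$, which sends $\tilde{v}$ to $\big(\dot{\tilde{v}} - \kappa \mathbf{L}_{\tilde{u}}[\eta](\tilde{u}).\tilde{v} + \pi[\eta]\divg[\eta](F'(\tilde{u}).\tilde{v}),\ \tilde{v}(\cdot,0)\big)$, is an isomorphism. This is precisely the statement that the linearized system~\eqref{mainsysreglinprime} admits, for every right-hand side $\tilde{f} \in \mathscr{F}$ and every $\tilde{v}_0 \in \mathscr{U}_0$, a unique solution $\tilde{v} \in \mathscr{U}$, which is guaranteed by Proposition~\ref{prop-sumup}(ii). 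Thus $\partial_{\tilde{u}} e$ is a continuous linear bijection between Banach spaces, and the Bounded Inverse Theorem makes its inverse continuous. Since this holds at every point, the Implicit Function Theorem applies in a neighbourhood of each $(\tilde{\xi},\eta)$, and patching the local resolutions (which agree with $\mathbb{S}$ by uniqueness) gives that $\mathbb{S}$ is $\mathcal{C}^1$ on $\mathscr{C}_c \times \mathscr{G}$.

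The main obstacle I expect is the careful bookkeeping in the second step: one must confirm that each product of an $\eta$-dependent geometric coefficient with a $\tilde{u}$-dependent factor lands in $\mathscr{F}$ with the claimed joint $\mathcal{C}^1$ dependence, which hinges on the regularity $\cof\nabla X,\ \nabla Y(X) \in \mathbf{W}^{1,\infty}$ and $\Delta Y(X) \in \mathbf{L}^{\infty}$ from Lemma~\ref{lemma-Xeta}, combined with the differentiability of the Nemytskii operator associated with $F$. The invertibility in the third step is essentially free once Proposition~\ref{prop-sumup}(ii) is invoked.
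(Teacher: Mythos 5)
Your proposal is correct and follows essentially the same route as the paper: the paper also applies the implicit function theorem to the constraint map $e(\tilde{u},\tilde{\xi},\eta)$, deduces the $\mathcal{C}^1$ regularity of $e$ from $F\in\mathcal{C}^2$ and Lemma~\ref{lemma-Xeta}, and obtains invertibility of $\partial_{\tilde{u}}e$ from Proposition~\ref{prop-sumup}(ii). In fact you fill in two details the paper glosses over -- the bookkeeping of the coefficient products that the paper ``leaves to the reader,'' and the passage from bijectivity of $\partial_{\tilde{u}}e$ to bounded invertibility via the open mapping theorem (the paper mentions only surjectivity) -- so your write-up is a faithful, slightly more complete version of the same argument.
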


\begin{proof}
	We apply the implicit function theorem to the mapping defined by
\begin{eqnarray*}
	\begin{array} {rccl}
		e: & \mathscr{U} \times \mathscr{C}_c \times \mathscr{G} & \rightarrow &
		\mathscr{F} \times \mathscr{U}_0 \\
		& (\tilde{u},\tilde{\xi},\eta) & \mapsto &
		(\dot{\tilde{u}} - \kappa\mathbf{L}[\eta]\tilde{u} +\pi[\eta]\divg [\eta](F(\tilde{u})) - B\tilde{\xi},\tilde{u}(0) - \tilde{u}_0).
	\end{array}
\end{eqnarray*}
For all $(\tilde{\xi},\eta) \in \mathscr{C}_c\times \mathscr{G}$ the equality $e(\mathbb{S}(\tilde{\xi},\eta), \tilde{\xi},\eta) = 0$ holds. The $\mathcal{C}^1$ regularity of $e$ is deduced from the assumption $F\in \mathcal{C}^2(\R^k;\R^{k\times d})$, for the sensitivity with respect to $\tilde{u}$, and from the regularities given by Lemma~\ref{lemma-Xeta}, for the coefficients involving $\eta$, that appear through $X$ in operators $\LL[\eta]$ and $\pi[\eta] \divg [\eta]$. This verification is left to the reader. The surjectivity of the derivative of $e$ with respect to $\tilde u$ follows from (ii) of Proposition~\ref{prop-sumup}. Now the claim follows from the implicit function theorem.
\end{proof}


\section{Optimality conditions} \label{sec-optcond}

For the sake of simplicity, in this section for the geometric parameter space we consider the case $\mathscr{G} = \R$. This is without loss of generality, because in the case where $\mathscr{G}$ is of finite dimension over $\R$, the calculations of the gradients can be done by assembling of the components, with respect to every component of the parameter $\eta$.

According to the cost functional~\eqref{defJtilde} and the control-to-state mapping~\eqref{ctsm}, we define the Lagrangian functional:
\begin{eqnarray*}
	L(\tilde{\xi}, \eta, \lambda) & := & \tilde{J}(\mathbb{S}(\tilde{\xi},\eta),\tilde{\xi}, \eta)
	- \lambda G(\tilde{\xi}) \\
	& = & - \frac{\alpha}{2} \int_0^T
	\int_{\omega} \pi[\eta]|\tilde{\xi}|^2_{\R^l} \, \d y \, \d t
	+ \int_{\Gamma_0} \phi(\mathbb{S}(\tilde{\xi},\eta)(y,T)) \, \d \Gamma_0(y)
	- \lambda G(\tilde{\xi}).
\end{eqnarray*}
Maximizing $\tilde{J}(\mathbb{S}(\tilde{\xi},\eta),\tilde{\xi}, \eta)$ under the constraint $G(\tilde{\xi}) \leq 0$ is obtained by finding a saddle-point to the Lagrangian $L(\tilde{\xi},\eta, \lambda)$, with respect to the variables $(\tilde{\xi},\eta)$ and $\lambda$. For that purpose, we are interested in necessary optimality conditions.

\subsection{Linearized and adjoint systems}

\paragraph{Sensitivity with respect to $\eta$.}

If $\tilde{v}$ denotes the Fr\'echet derivative of the control-to-state mapping with respect to $\eta$, whose existence is provided by Theorem~\ref{thCtS}, then $\tilde{v}$ satisfies the following linear system
\begin{eqnarray} \label{syslineta}
\left\{ \begin{array} {rcl}
	\displaystyle
\dot{\tilde{v}} - \kappa\mathbf{L}_{\tilde{u}}[\eta](\tilde{u}).\tilde{v}
+ \pi[\eta]\divg[\eta](F'(\tilde{u}).\tilde{v})
= \tilde{f} & & \text{in } \Omega \times (0,T), \\
	\displaystyle  \tilde{v} = 0 & & \text{on } \p \Omega \times (0,T), \\
	\displaystyle \tilde{v}(\cdot,0) = 0  & & \text{in } \Omega,
\end{array} \right.
\end{eqnarray}
where $\tilde{u}$ satisfies~\eqref{mainsysregprime}, and with
\begin{eqnarray*}
\tilde{f} &  = & \kappa\mathbf{L}_{\eta}[\eta](\tilde{u}) - \left(\pi[\eta]\divg_{\eta}[\eta] + \pi_{\eta}[\eta]\divg[\eta]\right)(F(\tilde{u})).
\end{eqnarray*}
We define $v(x,t)  =  \tilde{v}(Y(x),t)$. In view of the regularity of the change of variables given in~\eqref{regXY}, we can verify that $\tilde{v}$ satisfies the system above if and only if $v$ satisfies the system~\eqref{mainsysreglin}, with $f(x,t)  =  \tilde{f} (Y(x),t)$ as right-hand-side.

\paragraph{The adjoint state.}
We define the adjoint-state denoted by $\tilde{q}$ as the solution of the following system, which is the adjoint of system~\eqref{mainsysreglinprime}:
\begin{eqnarray} \label{adjsysregprime}
	\left\{ \begin{array} {rcl}
		\displaystyle
		-\dot{\tilde{q}} - \kappa\mathbf{L}_{\tilde{u}}[\eta](\tilde{u})^{\ast}.\tilde{q}
		- F'(\tilde{u})^{\ast}.\left(\nabla(\pi[\eta]\tilde{q})\, \cof \nabla X[\eta]^T\right)
		= 0 & & \text{in } \Omega \times (0,T), \\
		\displaystyle  \tilde{q} = 0 & & \text{on } \p \Omega \times (0,T), \\
		\displaystyle \tilde{q}(\cdot,T) =  \nabla \phi(\tilde{u}(\cdot,T)) \, \delta_{\Gamma_0}
		& & \text{in } \Omega.
	\end{array} \right.
\end{eqnarray}
Actually the operator $\mathbf{L}[\eta]$ is linear and self-adjoint, as well as $\mathbf{L}_{\tilde{u}}[\eta](\tilde{u}) = \mathbf{L}[\eta]$, for all $\tilde{u} \in \mathscr{U}$. Defining
\begin{eqnarray*}
q(x,t) & = & (\det \nabla Y(x))\tilde{q}(Y(x),t),
\end{eqnarray*}
in view of the regularity of the change of variables given in~\eqref{regXY}, we can verify that $\tilde{q}$ satisfies the system above if and only of $q$ satisfies the system~\eqref{adjsysreg}, with
\begin{eqnarray}
q(x,T) = \frac{\nabla \phi(u(x,T))}{\det \nabla X(Y(x))} \, \delta_{\Gamma_0}(Y(x))
 =  (\det \nabla Y(x)) \nabla \phi(u(x,T)) \, \delta_{\Gamma}(x)
 \label{termeqp}
\end{eqnarray}
as terminal condition. In section~\ref{sec-cv2D} the choice of parameterization in 2D will be such that $\det \nabla X \equiv 1$ on $\Gamma_0$, and in that case we will consider
\begin{eqnarray}
q(\cdot,T) & = &  \nabla \phi(u(\cdot,T)) \, \delta_{\Gamma}. \label{termeqpbis}
\end{eqnarray}
The discretization of this terminal condition is explained in section~\ref{sec-term2D}. Since $\Gamma$ is a submanifold of codimension~1, this terminal condition lies in $\mathbf{H}^{-1}(\Omega)$, because $\nabla \phi(u(\cdot,T)) \in \mathcal{C}(\overline{\Omega})$. The corresponding variational formulation of system~\eqref{adjsysreg} is then:
\begin{eqnarray*}
& & \left\{ \begin{array} {l}
\text{Find } q \text{ such that for all } \varphi \in \mathscr{U}
\text{ with } \varphi(\cdot,0) = 0:\\
 \displaystyle \int_0^T\int_{\Omega} q\cdot
\left(\dot{\varphi} -\mathcal{A}_{\kappa}\varphi + \mathcal{B}(u)\varphi \right) \d \Omega =
\int_{\Gamma_0} \frac{\nabla \phi(u(X(y),T))}{\det \nabla X(y)} \cdot \varphi(X(y),T) \,
\d \Gamma_0(y).
\end{array} \right.
\end{eqnarray*}
An example of a functional framework for the adjoint state is given in section~\ref{App2-adj}.

\subsection{Optimality conditions with viscosity} \label{sec-optcondvisc}

We define the following Hamiltonian functions:
\begin{eqnarray}
	\mathcal{H}(u,\xi,q) & = & -\frac{\alpha}{2}|\xi |_{\R^l}^2 -
	q\cdot \divg(F(u))+ q\cdot B\xi ,
	\label{Hami1} \\
	\mathcal{H}^{(\kappa)}[\eta](\tilde{u},\tilde{\xi},\tilde{q},\lambda) & = & \pi[\eta]
	\left(-\frac{\alpha}{2}|\tilde{\xi} |_{\R^l}^2 -
	\tilde{q}\cdot \divg [\eta](F(\tilde{u})) + \tilde{q} \cdot B\tilde{\xi} \right)
	+ \kappa \tilde{q}\cdot \mathbf{L}[\eta] \tilde{u}
	- \lambda \mathds{1}_{\omega}|\tilde{\xi}|^p_{\R^l}.
	\label{Hami2}
\end{eqnarray}
A short calculation shows that
\begin{eqnarray*}
	\int_{\Omega} \mathcal{H}(u,\xi,q) \d x 
	+ \int_{\Omega} \left( \kappa q\cdot \Delta u - \lambda \mathds{1}_{\omega}|\xi|^p_{\R^l} \right)\, \d x
	 & = &
	\int_{\Omega} \mathcal{H}^{(\kappa)}[\eta](\tilde{u},\tilde{\xi},\tilde{q},\lambda) \, \d y.
\end{eqnarray*}
Given a family $(X[\eta])_{\eta \in \R}$, we introduce the Eulerian velocity $w[\eta]$, denoted by $w$ for the sake of concision, defined by
\begin{eqnarray}
	w(x) = \frac{\p X[\eta]}{\p \eta}\big(Y[\eta](x)\big), & & x\in \Omega. \label{EulLag}
\end{eqnarray}
We have a relation between $\divg w$ and $\det \nabla X[\eta]$, namely:
\begin{eqnarray}
(\divg w)\circ X[\eta] & = & \frac{\p }{\p \eta} \left(\log (\det \nabla X[\eta]) \right),
\label{formuladiv} \\
 \det \nabla X[\eta] = 1  & \Rightarrow &  \divg w = 0. \nonumber
\end{eqnarray}
Indeed, it is obtained by the following calculation:
\begin{eqnarray*}
\divg w & = & \trace ( \nabla w) = 
\trace \left(\frac{\p \nabla X[\eta]}{\p \eta}(Y[\eta]) \nabla Y[\eta]\right), \\
(\divg w) \circ X[\eta] & = & \frac{\p \nabla X[\eta]}{\p \eta}: \nabla Y[\eta](X[\eta])^T
= (\det \nabla X[\eta])^{-1} \frac{\p \nabla X[\eta]}{\p \eta}: \cof\nabla X[\eta]  = 
\frac{ \frac{\p }{\p \eta}(\det\nabla X[\eta])}{\det \nabla X[\eta]}.
\end{eqnarray*}
Introducing the Eulerian velocity leads us to elegant expressions for the first-order optimality conditions.

\begin{proposition} \label{thopt}
	Given a pair $(\tilde{\xi} , \eta) \in \mathscr{C} \times \mathscr{G}$, we denote by $\tilde{u}$ and $\tilde{q}$ the solutions of system~\eqref{mainsysregprime} and system~\eqref{adjsysregprime}, respectively. The gradient of the Lagrangian functional is given by
\begin{subequations}\label{gradient}
	\begin{eqnarray}
		L_{\tilde{\xi}}(\tilde{\xi},\eta,\lambda) & = & \mathcal{H}_{\tilde{\xi}}^{(\kappa)}[\eta](\tilde{u},\tilde{\xi},\tilde{q},\lambda),
		\label{gradientxi} \\
		L_{\eta}(\tilde{\xi},\eta,\lambda) & = &
		\int_0^T\int_{\Omega} \mathcal{H}_{\eta}^{(\kappa)}[\eta](\tilde{u},\tilde{\xi},\tilde{q},\lambda) \,
		\d y \, \d t,
		\label{gradienteta}\\
		L_{\lambda}(\tilde{\xi},\eta,\lambda) & = & -G(\xi), \label{gradientlambda}
	\end{eqnarray}
\end{subequations}
where these quantities can be expressed in terms of $(u,\xi,q)$ as follows
\begin{subequations}
\begin{eqnarray}
	\mathcal{H}_{\tilde{\xi}}^{(\kappa)}[\eta](\tilde{u},\tilde{\xi},\tilde{q},\lambda) & = &
	\left(-\alpha \xi + B^{\ast}q - \lambda p \mathds{1}_{\omega}|\xi|_{\R^l}^{p-2}\xi\right) \circ X[\eta] , \label{exp-shitty0} \\
	\int_{\Omega} \mathcal{H}_{\eta}^{(\kappa)}[\eta](\tilde{u},\tilde{\xi},\tilde{q},\lambda) \,
	\d y \,
	& = & \kappa\int_{\Omega}  \tilde{q} \cdot \mathbf{L}_{\eta}[\eta](\tilde{u}) \d x \nonumber \\
	& & + \int_{\Omega}
	\left(-(\divg w )\mathcal{H}(u,\xi,q)
	+ q\cdot \divg((\divg w)F(u) - F(u)\nabla w^T)\right) \d x
	\label{exp-shitty}  
\end{eqnarray}
\end{subequations}
with $u(x,t) = \tilde{u}(Y(x),t)$, $q(x,t) = (\det \nabla Y(x))\tilde{q}(Y(x),t)$ and $\xi(x,t) = (\det \nabla Y(x))\tilde{\xi}(Y(x),t)$.
\end{proposition}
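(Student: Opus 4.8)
The plan is to prove \eqref{gradient} by the adjoint (reduced-gradient) method, exploiting that $\mathbb{S}$ is $\mathcal{C}^1$ (Theorem~\ref{thCtS}), so that $L$ is differentiable and the chain rule applies. The multiplier component \eqref{gradientlambda} is immediate, since $L_\lambda=-G(\tilde\xi)=-G(\xi)$ (the norm constraint being invariant under the change of variables). For the other two components the engine is a single duality identity, established once and reused. Testing the linearized system~\eqref{mainsysreglinprime} (with datum $\tilde f$ and $\tilde v(\cdot,0)=0$) against the adjoint state $\tilde q$, integrating by parts in time — where $\tilde v(\cdot,0)=0$ kills the lower endpoint — and in space — using that $\mathbf{L}[\eta]$ is self-adjoint and that the convective term transposes exactly into the operator $F'(\tilde u)^{\ast}.\big(\nabla(\pi[\eta]\tilde q)\,\cof\nabla X[\eta]^{T}\big)$ of~\eqref{adjsysregprime} — all interior terms cancel by the adjoint equation, leaving
\[
\int_\Omega \tilde q(\cdot,T)\cdot \tilde v(\cdot,T)\,\d y = \int_0^T\!\!\int_\Omega \tilde q\cdot \tilde f \,\d y\,\d t .
\]
Since the terminal condition in~\eqref{adjsysregprime} is $\tilde q(\cdot,T)=\nabla\phi(\tilde u(\cdot,T))\,\delta_{\Gamma_0}$, the left-hand side equals the directional derivative $\int_{\Gamma_0}\nabla\phi(\tilde u(y,T))\cdot\tilde v(y,T)\,\d\Gamma_0(y)$ of the terminal term in~\eqref{defJtilde}.

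Next I would differentiate with respect to $\tilde\xi$. A perturbation $\delta\tilde\xi$ generates the sensitivity $\tilde v=\mathbb{S}_{\tilde\xi}\delta\tilde\xi$ solving~\eqref{mainsysreglinprime} with $\tilde f=\pi[\eta]B\delta\tilde\xi$ and $\tilde v(\cdot,0)=0$; combining the duality identity with the explicit derivatives of the control cost $-\tfrac{\alpha}{2}\int_0^T\int_\omega\pi[\eta]|\tilde\xi|^2$ and of $-\lambda G$, one reads off $L_{\tilde\xi}=\mathcal{H}^{(\kappa)}_{\tilde\xi}[\eta](\tilde u,\tilde\xi,\tilde q,\lambda)$, which is~\eqref{gradientxi}. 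To obtain~\eqref{exp-shitty0} I would then return to the original unknowns, using $X[\eta]\equiv\Id$ on $\overline\omega$ (hence $\det\nabla X[\eta]\equiv1$, $\pi[\eta]\equiv1$, and $\tilde\xi=\xi$, $\tilde q=q$ there): every factor composes trivially with $X[\eta]$ on the support $\omega$ and the stated expression follows.

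For $L_\eta$ the crucial simplification is structural: by construction of the change of variables the terminal term in~\eqref{defJtilde} carries no explicit $\eta$-dependence, so the entire $\eta$-sensitivity of the terminal cost is channelled through $\tilde v(\cdot,T)$, that is, through the datum $\tilde f=\kappa\mathbf{L}_\eta[\eta](\tilde u)-(\pi[\eta]\divg_\eta[\eta]+\pi_\eta[\eta]\divg[\eta])(F(\tilde u))$ of~\eqref{syslineta}. The only remaining explicit $\eta$-dependence sits in the control cost through $\pi_\eta[\eta]$, which vanishes on $\omega$ because $w\equiv0$ there. Matching $\int_\Omega\tilde q\cdot\tilde f$ term by term against $\partial_\eta\mathcal{H}^{(\kappa)}[\eta]$ (the derivative taken with $(\tilde u,\tilde\xi,\tilde q,\lambda)$ frozen) then yields~\eqref{gradienteta}.

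The main obstacle is the final conversion to the Eulerian form~\eqref{exp-shitty}. Here I would first compute, from~\eqref{formuladiv}, $\pi_\eta[\eta]=-\pi[\eta]\,((\divg w)\circ X[\eta])$, together with the cofactor derivative
\[
\partial_\eta \cof\nabla X[\eta] = \big[((\divg w)\circ X[\eta])\,\I - ((\nabla w)\circ X[\eta])^{T}\big]\,\cof\nabla X[\eta],
\]
obtained from $\partial_\eta X[\eta]=w\circ X[\eta]$ via the Jacobi formula for $\partial_\eta\det\nabla X[\eta]$ and the derivative of the inverse transpose. Substituting these into $\int_\Omega\tilde q\cdot\tilde f\,\d y$, pushing the integrals back to the $x$-variable by $x=X[\eta](y)$ and undoing the transformed divergence with Lemma~\ref{lemma-divcof}, the coefficient $\pi_\eta[\eta]$ produces the Hamiltonian-weighted term $-(\divg w)\,\mathcal{H}(u,\xi,q)$ — the control contributions of $\mathcal{H}$ disappearing since $w\equiv0$ on $\omega$ — while $\partial_\eta\cof\nabla X[\eta]$ produces the divergence term in $(\divg w)F(u)-F(u)\nabla w^{T}$; the viscous piece $\kappa\,\tilde q\cdot\mathbf{L}_\eta[\eta](\tilde u)$ is carried along unchanged. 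The delicate points are the bookkeeping of the Jacobian and cofactor factors under the two successive changes of variables (the one for $\tilde u,\tilde\xi$ and the one for $\tilde q$, which scales with $\det\nabla Y$), and the transpose conventions in $\nabla w$, which is precisely where the exact form of the divergence term is pinned down.
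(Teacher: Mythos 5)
Your proposal follows the paper's own proof essentially step for step: the abstract formulas \eqref{gradientxi}--\eqref{gradientlambda} are obtained there exactly as you describe, by pairing the linearized systems \eqref{mainsysreglinprime} and \eqref{syslineta} with the adjoint state of \eqref{adjsysregprime} (the paper writes your duality identity as $\int_\Omega \tilde q(\cdot,T)\cdot\tilde v(\cdot,T)\,\d y=\int_0^T\int_\Omega(\dot{\tilde q}\cdot\tilde v+\tilde q\cdot\dot{\tilde v})\,\d y\,\d t$ and then substitutes the two equations), and the Eulerian expressions come from the same three ingredients you list: $\pi_\eta[\eta]=-\pi[\eta]\,((\divg w)\circ X[\eta])$, the cofactor derivative, and Lemma~\ref{lemma-divcof} applied to $K=(\divg w)F(u)-F(u)\nabla w^T$. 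One shared implicit assumption is worth making explicit: the zero initial condition in \eqref{syslineta}, which your duality identity needs, presumes that $\tilde u_0=u_0\circ X[\eta]$ does not depend on $\eta$ (e.g.\ $u_0$ constant on the deformation region), exactly as in the paper.

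The substantive issue sits precisely at the step you defer (``the transpose conventions in $\nabla w$\dots where the exact form of the divergence term is pinned down''): carried out carefully, your plan does \emph{not} produce the last term of \eqref{exp-shitty} with the stated sign. In \eqref{Hami2} the convective part enters the Hamiltonian as $-\pi[\eta]\,\tilde q\cdot\divg[\eta](F(\tilde u))$, so differentiating in $\eta$ with $(\tilde u,\tilde\xi,\tilde q)$ frozen yields the cofactor term with a \emph{minus} sign,
\begin{eqnarray*}
-\,\pi[\eta]\,\tilde q\cdot\divg\Big(F(\tilde u)\,\frac{\p}{\p\eta}\big(\cof\nabla X[\eta]\big)\Big),
\end{eqnarray*}
equivalently, the source of \eqref{syslineta} contains $-\pi[\eta]\divg_\eta[\eta](F(\tilde u))$, and this is what $\int\tilde q\cdot\tilde f$ pairs with. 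After Lemma~\ref{lemma-divcof} and the change of variables this becomes $-\int_\Omega q\cdot\divg\big((\divg w)F(u)-F(u)\nabla w^T\big)\,\d x$, the opposite of what \eqref{exp-shitty} records; the plus sign there is inherited from the paper's intermediate identity \eqref{Hamieta}, where this term already appears with the wrong sign. A two-line example confirms the minus sign is the correct one: take $d=2$, $k=1$, $\kappa=0$, $F(u)=(0,u)$, $\Gamma_0=[a,b]\times\{0\}$, and a tangential deformation $w=\zeta\,(1,0)$ with $\zeta\equiv 1$ near $\Gamma_0$ and $\zeta\equiv 0$ near $\omega$; the exact derivative of the terminal term is $u((b,0),T)-u((a,0),T)$, which the minus-sign formula reproduces, while \eqref{exp-shitty} as printed gives its negative. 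So either you reproduce the proposition as stated by committing the same sign slip as \eqref{Hamieta}, or your bookkeeping, done correctly, proves \eqref{gradient} and \eqref{exp-shitty0} together with \eqref{exp-shitty} having its divergence term negated (with the corresponding sign changes in \eqref{Jeta2} and \eqref{Jeta3}). Apart from this point, which is as much a defect of the paper's own proof as of your sketch, your outline is sound and coincides with the paper's argument.
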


\begin{remark}
	For numerical realization, it is more interesting to express the gradient of $L$ in terms of $(u,q,\xi)$. Indeed, when $\tilde{u}$ and $\tilde{q}$ satisfy systems~\eqref{mainsysregprime} and~\eqref{adjsysregprime} respectively, $u$ and $q$ satisfy systems~\eqref{mainsysreg} and~\eqref{adjsysreg} respectively, that are easier to solve.
\end{remark}

\begin{proof}
	\textit{Step 1.} Denote by $\tilde{v}$ the sensitivity of $\tilde{u}$ with respect to $\tilde{\xi}$, at point $\tilde{\chi}$. It satisfies system~\eqref{mainsysreglinprime}, with $\tilde{v}_0 = 0$ as initial condition, and $\tilde{f} = B\tilde{\chi}$ as right-hand-side. We get~\eqref{gradientxi} by using the chain rule and integration by parts, as follows:
	\begin{eqnarray*}
	L_{\tilde{\xi}}(\tilde{\xi},\eta,\lambda).\tilde{\chi} & = &
	\int_{\Gamma_0} \nabla \phi(\tilde{u}_{|\Gamma_0}(\cdot,T)) \cdot \tilde{v}(\cdot,T) \d \Gamma_0
	- \alpha \int_0^T \int_{\Omega} \pi[\eta]  \tilde{\xi} \cdot \tilde{\chi} \d \Omega \, \d t
	- \lambda p\int_0^T\int_{\Omega}\mathds{1}_{\omega}|\tilde{\xi}|_{\R^l}^{p-2}\tilde{\xi} \cdot \tilde{\chi}\d \Omega \d t\\
	& = & \int_{\Omega} \tilde{q}(\cdot,T) \cdot \tilde{v}(\cdot,T) \d \Omega
	- \alpha \int_0^T \int_{\Omega} \pi[\eta]  \tilde{\xi} \cdot \tilde{\chi} \d \Omega \, \d t
	- \lambda p\int_0^T\int_{\Omega}\mathds{1}_{\omega}|\tilde{\xi}|_{\R^l}^{p-2}\tilde{\xi} \cdot \tilde{\chi}\d \Omega \d t\\
	& = & \int_0^T \int_{\Omega}
	\left(\dot{\tilde{q}}\cdot \tilde{v} + \tilde{q} \cdot \dot{\tilde{v}}\right)\d \Omega \, \d t
	- \alpha \int_0^T \int_{\Omega} \pi[\eta]  \tilde{\xi} \cdot \tilde{\chi} \d \Omega \, \d t
	- \lambda p\int_0^T\int_{\Omega}\mathds{1}_{\omega}|\tilde{\xi}|_{\R^l}^{p-2}\tilde{\xi} \cdot \tilde{\chi}\d \Omega \d t\\
	& = & \int_0^T \int_{\Omega}
	 \tilde{q} \cdot \pi[\eta]B\tilde{\chi}\d \Omega \, \d t
	- \alpha \int_0^T \int_{\Omega} \pi[\eta]  \tilde{\xi} \cdot \tilde{\chi} \d \Omega \, \d t
	- \lambda p\int_0^T\int_{\Omega}\mathds{1}_{\omega}|\tilde{\xi}|_{\R^l}^{p-2}\tilde{\xi} \cdot \tilde{\chi}\d \Omega \d t\\
	& = & \int_0^T \int_{\Omega}
	\pi[\eta]\left( B^{\ast}\tilde{q} - \alpha\tilde{\xi} - \lambda p \mathds{1}_{\omega}|\tilde{\xi}|_{\R^l}^{p-2}\tilde{\xi}\right)\cdot \tilde{\chi}\d \Omega \, \d t
	 = \int_0^T \int_{\Omega} \mathcal{H}_{\tilde{\xi}}^{(\kappa)}[\eta](\tilde{u},\tilde{\xi},\tilde{q},\lambda) \cdot \tilde{\chi}\d \Omega \, \d t
	\\
	& = & \int_0^T \int_{\Omega}
	\left(\tilde{\chi}\circ X[\eta]\right) \cdot \left(B^{\ast}p -\alpha \xi - \lambda p \mathds{1}_{\omega}|\xi|_{\R^l}^{p-2}\xi\right)\circ X[\eta] \d \Omega \, \d t.
	\end{eqnarray*}
	The last equality yields~\eqref{exp-shitty0}. For getting~\eqref{gradienteta}, we now denote by $\tilde{v}$ the sensitivity of $\tilde{u}$ with respect to $\eta$, which satisfies system~\eqref{syslineta}. As above, we calculate
	\begin{eqnarray*}
	L_{\eta}(\tilde{\xi},\eta,\lambda) & = &
	\int_{\Omega} \tilde{q}(\cdot,T) \cdot \tilde{v}(\cdot,T) \d \Omega
	- \frac{\alpha}{2} \int_0^T \int_{\Omega} \pi_{\eta}[\eta] | \tilde{\xi}|^2_{\R^l} \d \Omega\,  \d t\\
	& = & \int_0^T \int_{\Omega}
	\left(\dot{\tilde{q}}\cdot \tilde{v} + \tilde{q} \cdot \dot{\tilde{v}}\right)\d \Omega \, \d t
	- \frac{\alpha}{2} \int_0^T \int_{\Omega} \pi_{\eta}[\eta] | \tilde{\xi}|^2_{\R^l} \d \Omega \, \d t\\
	& = & \int_0^T \int_{\Omega}
	\tilde{q} \cdot \tilde{f}\d \Omega \, \d t
	- \frac{\alpha}{2} \int_0^T \int_{\Omega} \pi_{\eta}[\eta] | \tilde{\xi}|^2_{\R^l} \d \Omega \, \d t,
	\end{eqnarray*}
	with $\tilde{f}   =  \kappa\mathbf{L}_{\eta}[\eta](\tilde{u}) - \left(\pi[\eta]\divg_{\eta}[\eta] + \pi_{\eta}[\eta]\divg[\eta]\right)(F(\tilde{u}))$, which corresponds to the announced expression. Formula~\eqref{gradientlambda} is obvious.\\
	
	\textit{Step 2.}
	Formula~\eqref{exp-shitty0} follows by composition. The expression~\eqref{exp-shitty} is obtained by differentiating~\eqref{Hami2} as follows
	\begin{eqnarray}
		\mathcal{H}_{\eta}^{(\kappa)}[\eta](\tilde{u},\tilde{\xi},\tilde{q},\lambda) & = &
		\pi_{\eta}[\eta]\mathcal{H}(u,\xi,q)\circ X
		+ \kappa \tilde{q} \cdot \mathbf{L}_{\eta}[\eta] \tilde{u}
		+ \pi[\eta] \divg\left(F(\tilde{u})\frac{\p }{\p \eta}(\cof \nabla X[\eta]) \right)\cdot \tilde{q}. \label{Hamieta}
	\end{eqnarray}
	On one hand, for handling the first term of the right-hand-side of~\eqref{Hamieta}, we use~\eqref{formuladiv} to calculate
	\begin{eqnarray*}
	& & \pi_{\eta}[\eta]  =
	-\frac{\frac{\p }{\p \eta}\left(\det \nabla X[\eta] \right)}{(\det \nabla X[\eta])^2} =
		-\pi[\eta]\frac{\p }{\p \eta} (\log (\det \nabla X[\eta])) =
		-\pi[\eta] \big((\divg w)\circ X[\eta]\big), \\
	& & \int_{\Omega}\pi_{\eta}[\eta]\mathcal{H}(u,\xi,q)\circ X \, \d y
		 =  - \int_{\Omega} (\divg w) \mathcal{H}(u,\xi,q) \, \d x.
	\end{eqnarray*}
	For the third term, we first calculate
	\begin{eqnarray*}
		\frac{\p }{\p \eta}(\cof \nabla X[\eta]) & = &
	\pi[\eta]\left(\left(\cof \nabla X[\eta]:\frac{\p \nabla X[\eta]}{\p \eta}\right)\cof \nabla X[\eta]
	-\cof \nabla X[\eta] \frac{\p \nabla X[\eta]^T}{\p \eta} \cof \nabla X[\eta]  \right)\\
	& = &
	\pi[\eta]\left( \left(\frac{\p}{\p \eta} \det \nabla X[\eta]\right) \cof \nabla X[\eta]
	- \pi[\eta]^{-1} \left(\nabla w^T \circ X[\eta]\right)\cof \nabla X[\eta] \right) \\
	& = & \big((\divg w) \circ X[\eta]\big)\cof \nabla X[\eta] - \left(\nabla w^T \circ X[\eta]\right)\cof \nabla X[\eta].
	\end{eqnarray*}
	Then, applying Lemma~\ref{lemma-divcof} for the matrix-valued field $K = F(u)\left((\divg w)\I_{\R^d} - \nabla w^T\right)$, we obtain
	\begin{eqnarray*}
	\int_{\Omega} \pi[\eta]\divg\left(F(\tilde{u})\frac{\p }{\p \eta}(\cof \nabla X[\eta])\right)\cdot \tilde{q}\, \d y
	& = &
	\int_{\Omega} \left(\divg\left(F(u)\left((\divg w)\I_{\R^d} - \nabla w^T\right) \right)\circ X[\eta]\right)\cdot \tilde{q}\, \d y \\
	& = &
	\int_{\Omega} \divg\left(F(u)\left((\divg w)\I_{\R^d} - \nabla w^T\right) \right)\cdot q \, \d x,
	\end{eqnarray*}
	which concludes the proof.
\end{proof}

When $d = 1$ or $2$, Proposition~\ref{thopt} leads to additional expressions for the sensitivity with respect to the geometric parameter.

\begin{corollary} \label{corodim12}
	Assume that the hypotheses of Proposition~\ref{thopt} hold. In dimension~1, the gradient of the cost functional with respect to the cost parameter $\eta$ is given by
	\begin{eqnarray} \label{optconddim1}
	L_{\eta}(\tilde{\xi},\eta,\lambda) & = & -\int_0^T \int_{\Omega}
	(\divg w )\mathcal{H}(u,\xi,q) \, \d x \, \d t
	+ \kappa\int_0^T\int_{\Omega} \tilde{q} \cdot \mathbf{L}_{\eta}[\eta](\tilde{u}) \, \d y\, \d t. \label{Jeta1}
	\end{eqnarray}
	In dimension~2, it is given by
	\begin{eqnarray}
	L_{\eta}(\tilde{\xi},\eta, \lambda) & = & 
	\kappa\int_0^T\int_{\Omega} \tilde{q} \cdot \mathbf{L}_{\eta}[\eta](\tilde{u}) \, \d y\, \d t \nonumber \\
	& & -\int_0^T \int_{\Omega}
	(\divg w )\mathcal{H}(u,\xi,q)\d x\d t +
	\int_0^T \int_{\Omega} q\cdot \divg(F(u)\cof \nabla w) \, \d x \,  \d t. \label{Jeta2}
	\end{eqnarray}
	In general dimension, the expression can be reduced to
	\begin{eqnarray}
	L_{\eta}(\tilde{\xi},\eta,\lambda) & = & 
	\kappa\int_0^T\int_{\Omega} \tilde{q} \cdot \mathbf{L}_{\eta}[\eta](\tilde{u}) \, \d y\, \d t  
	-q\cdot \left(\nabla (F(u)) \nabla w^T \right) \, \d x \,  \d t, \label{Jeta3}
	\end{eqnarray}
	where the $i$-th line of the product $\nabla (F(u)) \nabla w^T$ has to be understood as
	$$\nabla \left((F(u)) \nabla w^T\right)_i = \displaystyle \sum_{j,k} (\nabla(F(u)))_{ikj} (\nabla w)_{jk} = \sum_{j,k,l} F'(u)_{ikl} \frac{\p u_l}{\p x_j} \frac{\p w_j}{\p x_k}.$$
\end{corollary}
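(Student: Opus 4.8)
The plan is to specialize the general expression~\eqref{exp-shitty} from Proposition~\ref{thopt} to the dimensions $d=1$ and $d=2$, exploiting the particular algebraic structure of the cofactor matrix in low dimension. Throughout I take for granted the master formula
\begin{eqnarray*}
\int_{\Omega} \mathcal{H}_{\eta}^{(\kappa)}[\eta](\tilde{u},\tilde{\xi},\tilde{q},\lambda)\, \d y
& = & \kappa\int_{\Omega} \tilde{q}\cdot \mathbf{L}_{\eta}[\eta](\tilde{u})\, \d x \\
& & + \int_{\Omega}\left(-(\divg w)\mathcal{H}(u,\xi,q) + q\cdot \divg\big((\divg w)F(u) - F(u)\nabla w^T\big)\right) \d x,
\end{eqnarray*}
and I only need to simplify the divergence term $q\cdot \divg\big((\divg w)F(u) - F(u)\nabla w^T\big)$ in each case.

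First I would treat dimension~1. Here $w$ is scalar, so $\nabla w = \divg w = w'$, whence $(\divg w)\I_{\R^d} - \nabla w^T = w' - w' = 0$ identically. The entire divergence contribution vanishes, leaving only the term $-(\divg w)\mathcal{H}(u,\xi,q)$ together with the viscosity term; integrating in time then gives~\eqref{Jeta1} directly. Next, for dimension~2 I would use the classical matrix identity relating the cofactor of a $2\times 2$ matrix to its trace and the matrix itself: for any $2\times 2$ matrix $M$ one has $\cof M = (\trace M)\I_{\R^2} - M^T$. Applying this with $M = \nabla w$ gives precisely $(\divg w)\I_{\R^2} - \nabla w^T = \cof \nabla w$, so the divergence term becomes $q\cdot \divg(F(u)\cof \nabla w)$, which is exactly the last summand of~\eqref{Jeta2}. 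The remaining two terms carry over unchanged from the master formula.

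For the general-dimension reduction~\eqref{Jeta3} I would integrate the divergence term by parts, moving the derivative onto $q$, and show that after cancellation only the contribution $-q\cdot(\nabla(F(u))\nabla w^T)$ survives, with the index contraction understood as stated in the corollary. This step requires expanding $\divg\big((\divg w)F(u) - F(u)\nabla w^T\big)$ componentwise via the product rule and recognizing that the terms involving derivatives of $w$ alone reorganize into the stated sum over $j,k,l$ of $F'(u)_{ikl}\,\p_j u_l\,\p_k w_j$. I expect this last identity to be the main technical obstacle, since it involves carefully tracking the third-order tensor $\nabla(F(u)) = F'(u)\nabla u$ and verifying that the mixed second-derivative terms of $w$ cancel; the low-dimensional cases~\eqref{Jeta1} and~\eqref{Jeta2}, by contrast, follow immediately from the two elementary cofactor identities above and should be essentially mechanical.
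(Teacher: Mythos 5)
Your treatment of dimensions 1 and 2 is correct and coincides with the paper's own argument: in dimension 1 the matrix $(\divg w)\I_{\R^d} - \nabla w^T$ vanishes identically, so the divergence contribution in~\eqref{exp-shitty} disappears, and in dimension 2 the identity $\cof M = (\trace M)\I - M^T$ for $2\times 2$ matrices, applied to $M = \nabla w$, turns that contribution into $q\cdot \divg(F(u)\cof \nabla w)$. These two steps are indeed, as you say, mechanical.

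Your plan for the general-dimension formula~\eqref{Jeta3}, however, has a genuine gap. Note that~\eqref{Jeta3} contains no Hamiltonian term at all, whereas~\eqref{Jeta1},~\eqref{Jeta2} and the master formula~\eqref{exp-shitty} retain $-\int (\divg w)\mathcal{H}(u,\xi,q)$. So~\eqref{Jeta3} cannot be obtained by simplifying the divergence term alone: expanding $q\cdot\divg\big((\divg w)F(u) - F(u)\nabla w^T\big)$ by the product rule and using the commutation identity $\nabla(\divg w) = \divg(\nabla w^T)$ (this is the cancellation of second derivatives of $w$ you anticipate) leaves
\begin{equation*}
(\divg w)\, q\cdot \divg(F(u)) \;-\; q\cdot\big(\nabla(F(u))\nabla w^T\big),
\end{equation*}
and the first piece does not cancel internally. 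In the paper's proof it cancels against the Hamiltonian term, and this requires the geometric fact that $\divg w = 0$ on the control region $\omega$ (a consequence of $X[\eta]\equiv \Id$ there, i.e.\ of the assumed disjointness of $\mathcal{S}_0^{(\rho)}$ and $\omega$): this kills the control-cost and $q\cdot B\xi$ contributions inside $(\divg w)\mathcal{H}$, so that only the $q\cdot\divg(F(u))$ part of $\mathcal{H}$ survives and matches the leftover piece above. Your proposal never invokes this property, and without it the claimed reduction to $-q\cdot(\nabla(F(u))\nabla w^T)$ is false. In addition, your suggestion to integrate by parts and move the derivative onto $q$ is a detour the paper does not take and that leads nowhere convenient: the paper's computation is entirely pointwise, by the product rule and $\nabla(\divg w) = \divg(\nabla w^T)$; no derivatives ever land on $q$ and no boundary terms arise.
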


\begin{proof}
In dimension $1$, it is obvious that in~\eqref{exp-shitty} the second term of the right-hand-side vanishes. In dimension~2, it is sufficient to verify that we have $\cof (\nabla w) = (\divg w)\I - \nabla w^T$. For getting~\eqref{Jeta3} in general dimension, from~\eqref{exp-shitty} we can simplify $\divg w = 0$ on the control domain for the control cost, and also
\begin{eqnarray*}
& & - \int_{\Omega} (\divg w ) \left(q\cdot \divg(F(u))\right) \d x +
\int_{\Omega} q \cdot \divg\left((\divg w)F(u) - F(u)\nabla w^T \right) \d x  \\
& & = \int_{\Omega}
q \cdot  \left(F(u)\nabla (\divg w ) - \divg\left(F(u) \nabla w^T\right)
\right) \d x
 = -\int_{\Omega}
q \cdot \left( \nabla (F(u)) \nabla w^T\right) \d x,
\end{eqnarray*}
where we used $\nabla (\divg w) = \divg (\nabla w^T)$.
\end{proof}

The following result states necessary conditions for an optimal pair $(\overline{\xi}, \overline{\eta})$.

\begin{theorem} \label{thoptplus}
If the pair $(\overline{\xi}, \overline{\eta})$ is optimal for problem~\eqref{pbprime}, then there exists $\overline{\lambda} \geq 0$ such that\begin{small}
\begin{eqnarray}
	\left\{ \begin{array} {r}
-\alpha \overline{\xi} + B^{\ast}\overline{q}
- \overline{\lambda}p|\overline{\xi}|^{p-2}_{\R^l}\overline{\xi}   = 0, \\
\displaystyle \kappa\int_0^T\int_{\Omega} \tilde{q} \cdot \mathbf{L}_{\eta}[\overline{\eta}](\tilde{u}) \, \d y\, \d t + \int_0^T\int_{\Omega}
\left( -(\divg \overline{w} )\mathcal{H}(\overline{u},\overline{\xi},\overline{q})
+ \overline{q}\cdot \divg((\divg \overline{w})F(\overline{u}) - F(\overline{u})\nabla \overline{w}^T)\right) \d x \, \d t= 0, \\
		\overline{\lambda} G(\overline{\xi}) = 0,
	\end{array} \right. \label{KKT}
\end{eqnarray}
\end{small}where $\overline{w}$ is determined by $\overline{\eta}$ in~\eqref{EulLag}, where $\overline{u}$ is given as the solution of~\eqref{mainsysreg} with $\overline{\xi}$ as right-hand-side, where $\overline{q}$ is given as the solution of system~\eqref{adjsysreg} with $\overline{u}$ and $q_T = \nabla \phi(\overline{u}(\cdot,T))\delta_{\Gamma[\overline{\eta}]}$ as data, and where for the viscous term we denote $\tilde{u}(y,t) = \overline{u}(X[\overline{\eta}](y),t)$ and $\tilde{q}(y,t) = (\det \nabla X[\overline{\eta}])\overline{q}(X[\overline{\eta}](y),t)$.
\end{theorem}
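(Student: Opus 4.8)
The plan is to recognize \eqref{pbprime} as the maximization of the reduced functional $j(\tilde{\xi},\eta) := \tilde{J}(\mathbb{S}(\tilde{\xi},\eta),\tilde{\xi},\eta)$ over $\mathscr{C}\times\mathscr{G}$ subject to the single scalar inequality constraint $G(\tilde{\xi})\le 0$, and then to invoke the Karush--Kuhn--Tucker multiplier rule in Banach spaces. The first task is to confirm that $j$ and $G$ are of class $\mathcal{C}^1$ on a neighborhood of the feasible set. The regularity of $j$ follows by composing the $\mathcal{C}^1$ control-to-state map $\mathbb{S}$ from Theorem~\ref{thCtS} with the terminal functional, which is $\mathcal{C}^1$ because $\phi\in\mathcal{C}^1(\mathscr{U}_0;\mathcal{C}(\overline{\Omega};\R))$ and the embedding $\mathscr{U}\hookrightarrow\mathcal{C}([0,T];\mathcal{C}(\overline{\Omega}))$ renders the trace integral over $\Gamma_0$ well-defined and differentiable; the control-cost term is smooth in $\tilde{\xi}$ and, through $\pi[\eta]$, in $\eta$ by Lemma~\ref{lemma-Xeta}, while $G$ is smooth and convex. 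A mild technical point is that Proposition~\ref{prop-sumup}(i) yields a common existence time only on the ball $\mathscr{C}_c$; to differentiate freely one fixes $T$ corresponding to a slightly larger radius $c'>c$, so that $\mathbb{S}$, hence $j$, is defined and $\mathcal{C}^1$ on an open set containing the constraint set.

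Next I would verify the constraint qualification. Since $G$ is convex and $G(0)=-c<0$, Slater's condition holds, which guarantees that the multiplier rule applies in genuine KKT form (not merely in Fritz--John form) and that the associated multiplier is nonnegative. Thus, at the optimal pair $(\overline{\xi},\overline{\eta})$ there exists $\overline{\lambda}\ge 0$ with
\[
L_{\tilde{\xi}}(\overline{\xi},\overline{\eta},\overline{\lambda})=0,\qquad
L_{\eta}(\overline{\xi},\overline{\eta},\overline{\lambda})=0,\qquad
\overline{\lambda}\,G(\overline{\xi})=0,
\]
where the stationarity in $\eta$ uses that $\overline{\eta}$ is an interior point of $\mathscr{G}=\R$ (no constraint acts on $\eta$), while the stationarity in $\tilde{\xi}$ and the sign and complementarity relations stem from the constraint $G\le 0$.

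It then remains to translate these abstract identities into the stated form using Proposition~\ref{thopt}. The condition $L_{\tilde{\xi}}=0$ together with \eqref{exp-shitty0} reads $\big(-\alpha\overline{\xi}+B^{\ast}\overline{q}-\overline{\lambda}p\mathds{1}_{\omega}|\overline{\xi}|^{p-2}_{\R^l}\overline{\xi}\big)\circ X[\overline{\eta}]=0$; since $X[\overline{\eta}]$ is a diffeomorphism this outer composition may be dropped, yielding the first line of \eqref{KKT}. The condition $L_{\eta}=0$ is precisely \eqref{gradienteta} combined with \eqref{exp-shitty}, which is the second line. Complementary slackness $\overline{\lambda}G(\overline{\xi})=0$ is the third. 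Finally I would record the correspondence of $\overline{u},\overline{q}$ with $\tilde{u},\tilde{q}$ through the change of variables \eqref{change} and $q(x,t)=(\det\nabla Y(x))\tilde{q}(Y(x),t)$, as already established between systems~\eqref{mainsysreg} and~\eqref{adjsysreg} and their transformed counterparts, so that all quantities in \eqref{KKT} are expressed in the Eulerian variables convenient for the numerics.

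The main obstacle I anticipate is not the algebra but the justification that the multiplier rule is licit: one must secure $\mathcal{C}^1$ regularity of the reduced problem on an open neighborhood despite the state equation being only locally in time well-posed and the feasible set being a closed ball, and one must certify the constraint qualification. Once Slater's condition legitimately delivers $\overline{\lambda}\ge 0$ and the stationarity identities, the remaining steps are the direct substitutions of the gradient formulas from Proposition~\ref{thopt}.
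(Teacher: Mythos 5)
Your proposal is correct and follows the same overall architecture as the paper's proof: both reduce the theorem to the gradient formulas of Proposition~\ref{thopt} plus a Karush--Kuhn--Tucker multiplier rule for the single scalar constraint $G(\tilde{\xi})\le 0$, with stationarity in $\eta$ coming for free since $\eta$ is unconstrained. The one genuine difference is how the constraint qualification is certified. The paper argues by cases: if $G(\overline{\xi})<0$ it simply takes $\overline{\lambda}=0$; if $G(\overline{\xi})=0$ it observes that then $\overline{\xi}\neq 0$, hence $G'(\overline{\xi})\neq 0$, so the linear independence condition holds at the active point (which for one scalar constraint also yields uniqueness of $\overline{\lambda}$). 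You instead invoke Slater's condition, using convexity of $G$ and the strictly feasible point $0$ (since $G(0)=-c<0$); for a convex differentiable constraint this gives $G'(\overline{\xi})\overline{\xi}\ge c>0$ at any active point, so the two routes certify the same nondegeneracy, yours uniformly over the feasible set and without case analysis, the paper's more elementarily and with uniqueness of the multiplier as a by-product. You also address a technicality the paper passes over in silence: $\mathbb{S}$ is only guaranteed on the closed ball $\mathscr{C}_c$ for the common horizon $T$, so differentiating the reduced functional strictly requires either one-sided derivatives at the boundary of the ball or, as you propose, fixing $T$ according to a slightly larger radius $c'>c$ so that everything is $\mathcal{C}^1$ on an open neighborhood of the feasible set; this is a sound refinement rather than a deviation, and the remaining substitutions (dropping the composition with the diffeomorphism $X[\overline{\eta}]$ in \eqref{exp-shitty0}, and recording the Eulerian--Lagrangian correspondence for $\overline{u}$, $\overline{q}$) match the paper.
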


\begin{proof}
The first two equations in~\eqref{KKT} follow from Proposition~\ref{thopt}. If the constraint is not active, that is to say $G(\overline{\xi}) <0$, then the result is satisfied with $\overline{\lambda} = 0$. If the constraint is active, that is to say $G(\overline{\xi}) = 0$, then $\overline{\xi} \neq 0$ and therefore $G'(\overline{\xi})$ is non-zero. As a consequence, the linear independence condition of qualification holds, that guarantees the existence and uniqueness of a Lagrange multiplier $\overline{\lambda}$ satisfying the Karush-Kuhn-Tucker conditions~\eqref{KKT}.
\end{proof}

\section{Exploiting numerically the optimality conditions} \label{secexploit}

In this subsection, we discuss how we can simplify the expression~\eqref{Jeta2} of the gradient with respect to $\eta$, whose expression (valid in dimension~1 and~2) is recalled below:
\begin{eqnarray*}
	L_{\eta}(\tilde{\xi},\eta,\lambda) & = & 
	\kappa \int_0^T \int_{\Omega} \tilde{q} \cdot \mathbf{L}_{\eta}[\eta](\tilde{u}) \, \d y \,  \d t
	\\
	& & -\int_0^T \int_{\Omega}
	(\divg w )\mathcal{H}(u,\xi,q) \, \d x \,  \d t
	+ (d-1)\int_0^T \int_{\Omega} q\cdot \divg(F(u)\cof \nabla w) \, \d x \,  \d t.
\end{eqnarray*}
For numerical realization, the domains of integration will be reduced. The resulting expressions depend on the spatial dimension and are considered separately.

\subsection{On the viscosity terms}
The viscosity term represented by the coefficient $\kappa$ has been added for obtaining more easily theoretical results such as existence results or optimality conditions. Deriving the same kind of results when $\kappa$ tends to zero can be a very delicate issue, and it is not our focus in this article. However, for the numerical simulation, we consider $\kappa$ as equal to zero, when exploiting the expressions of the optimality conditions. We can justify this by saying that the coefficient $\kappa$ is chosen small enough, so that no diffusion effect is observed.

\subsection{In dimension 1} \label{rk-trick}

In dimension 1, the expression~\eqref{optconddim1} given in Corollary~\ref{corodim12} can actually involve only explicit terms of the change of variables. Indeed, the extension $X$ of $X_{\mathcal{S}_0}$ satisfies $\det \nabla X = 1$ outside $\mathcal{S}_0$, implying that $\divg w = 0$ outside $X(\mathcal{S}_0)$. Then the expression~\eqref{Jeta1} of the gradient with respect to $\eta$ is reduced to the integration in space on $X(\mathcal{S}_0)$, and we obtain
	\begin{eqnarray*}
		L_{\eta}(\tilde{\xi},\eta,\lambda) & = & -\int_0^T \int_{X(\mathcal{S}_0)}
		(\divg w )\mathcal{H}(u,\xi,q) \, \d x \, \d t.
	\end{eqnarray*}
	The interest lies in the fact that the expression of $X$ inside $\mathcal{S}_0$, namely $X_{\mathcal{S}_0}$, as well as its inverse are known. The expression for $\divg w$ is then explicit as well, and here the problem of extending $X$ to the whole domain does not need to be solved. However, in dimension~1 in practice we can define an explicit change of variables on the whole domain $\Omega = (0,L)$, such that its inverse is also explicit everywhere. Since the computational cost is not important, for the sake of simplicity we prefer to keep the expression involving the integral over the whole domain.

\subsection{In dimension 2} \label{rk-trick2}

In dimension 2, the properties of $X$ given in~\eqref{sysdet} involve also that the integration domain of the first integral reduces to $\mathcal{S}_0$, like in dimension 1. The additional integral, namely
\begin{eqnarray*}
\int_{\Omega} q\cdot \divg(F(u)\cof \nabla w) \, \d x
\end{eqnarray*}
can also be reduced, on $\mathcal{S}^{(\rho)}_0$, because in $\overline{\Omega} \setminus \overline{\mathcal{S}^{(\rho)}_0}$ we have $( X \equiv \Id)  \Rightarrow  ( w \equiv 0)$. Thus the expression~\eqref{Jeta2} of the gradient with respect to $\eta$ is reduced to
\begin{eqnarray*}
L_{\eta}(\xi,\eta,\lambda) & = & -\int_0^T \int_{X(\mathcal{S}_0)}
(\divg w )\mathcal{H}(u,\xi,q) \, \d x \,  \d t
+ \int_0^T \int_{X(\mathcal{S}_0^{(\rho)})} q\cdot \divg(F(u)\cof \nabla w) \, \d x \,  \d t.
\end{eqnarray*}
Numerically, this expression is not convenient, because the domains of integration are deformed. We rather write it in Lagrangian coordinates, obtained by use of the change of variables in the integral, as follows:
\begin{eqnarray*}
L_{\eta}(\xi,\eta,\lambda) & = & -\int_0^T \int_{\mathcal{S}_0}
(\det \nabla X)((\divg w )\circ X)\mathcal{H}(u,\xi,q)\circ X \, \d y \,  \d t \\
& & + \int_0^T \int_{\mathcal{S}_0^{(\rho)}} (q\circ X)\cdot
\divg\left(F(u\circ X)\frac{\p }{\p \eta}\left(\cof \nabla X\right)\right) \, \d y \,  \d t \\
& = & -\int_0^T \int_{\mathcal{S}_0}
\frac{\p }{\p \eta}\left( \det \nabla X_{\mathcal{S}_0}\right)
\mathcal{H}(u,\xi,q)\circ X_{\mathcal{S}_0} \, \d y \,  \d t \\
& & + \int_0^T \int_{\mathcal{S}_0^{(\rho)}} (q\circ X)\cdot
\divg\left(F(u\circ X)\frac{\p }{\p \eta}\left((\divg X) \I - \nabla X^T\right)\right) \, \d y \,  \d t.
\end{eqnarray*}
Note that the radius $\rho > 0$ of the set $\mathcal{S}_0^{(\rho)}$ can be chosen arbitrarily small. Note also that the expression of $X$ is known explicitly in $\mathcal{S}_0$ (equal to $X_{\mathcal{S}_0}$, but not in $\mathcal{S}_0^{(\rho)} \setminus \overline{\mathcal{S}_0}$). Numerically, we will choose $\rho = 0$, so that we will only integrate on $\mathcal{S}_0$ instead of $\mathcal{S}_0^{(\rho)}$, and thus only terms involving $X_{\mathcal{S}_0}$ and its derivatives will be used for computing the gradient. This simplification may introduce a bias for the expressions of the gradients, since a possible additional contribution may appear when $\rho \rightarrow 0$ . However, in section~\eqref{sec-SW}, this does not trouble the numerical realization.





\section{Application to the Shallow-Water equations} \label{sec-SW}
In this section we apply the theoretical findings to the Shallow-Water equations, modeling the dynamics of a free surface flow in a basin whose the size is supposed to be much larger than any other length at stake in the problem, in particular the height $H$ whose the evolution is coupled to the horizontal velocity $v$. A short mathematical description of the model is given in~\cite{Dubois1999}, for instance. For control problems these equations have been addressed by~\cite{Coron2002}. Denoting by $\Omega$ a domain in $\R^d$ with $d=1$ or $2$, the system can be expressed as
\begin{eqnarray}
	\left\{
	\begin{array} {rcl}
		\displaystyle \frac{\p H}{\p t} -\kappa \Delta H + \divg (Hv) = 0
		& & \text{in } \Omega \times (0,T), \\
		\displaystyle \frac{\p }{\p t}(Hv) -\kappa\Delta (Hv) + \divg (Hv \otimes v) =
		-\nabla \left( \frac{g}{2}H^2 \right) + \mathds{1}_{\omega} \xi & & \text{in } \Omega \times (0,T), \\
		v = 0 & & \text{on } \p \Omega \times (0,T), \\
		(H,Hv)(\cdot,0) = u_0 & & \text{in } \Omega,
	\end{array} \right. \label{sysgeneralSW}
\end{eqnarray}
where $g = 9.81m.s^{-2}$. The abstract form is given by
\begin{eqnarray}
	\left\{
	\begin{array} {rcl}
		\displaystyle \frac{\p u}{\p t} -\kappa \Delta u + \divg(F(u)) = B\xi & & \text{in } \Omega \times (0,T), \\
		u_2 = 0 & & \text{on $\p \Omega \times(0,T)$}, \\
		u(\cdot,0) = u_0 & & \text{in } \Omega,
	\end{array} \right. \label{eqSW1Dabsform}
\end{eqnarray}
with $u = (H, Hv)$ and $F(u) = \left(u_2, \frac{u_2 \otimes u_2}{u_1} + \frac{g}{2}u_1^2 \I_{\R^d}\right)$. We refer to~\cite{Dubois1999} for more details. The control is a force $\xi \in \R^d$ distributed on a subdomain $\omega \subset \Omega$, and therefore $B\xi = (0,\mathds{1}_{\omega}\xi)$. Typically, as initial condition we choose $(H,v) = (H_0, 0)$, where $H_0 >0$ is a constant.
We consider small perturbations $\xi$ of the steady state $(H_0,0)$, in such a manner that the height function remains positive everywhere. The objective is to maximize the peak of some wave. The location of this peak is represented by $\eta$, which is a point in dimension~1, and a curve in dimension~2.

In the remainder we will neglect the viscosity $\kappa$, by saying that this parameter is close to the machine precision. Besides, the norm constraint in Problem~\eqref{pbprime} was added only for theoretical purpose.
In what follows, in the definition of $G$ we consider $c$ large enough, so that the constraint becomes inactive, and $\lambda = 0$.

\subsection{Theoretical points} \label{sec-th-swe} \label{sec-SW-thpoints}
To follow the results of Proposition~\ref{prop01} and Proposition~\ref{prop02} of section~\ref{sec-general}, the delicate points for this example lie in the specific form of the law $F$, and also in the choice of boundary conditions. Indeed, for the Shallow-Water Equations, the Dirichlet boundary condition is imposed only for some components of the unknown. The expression of the law is $F(u) = \left(u_2, \frac{u_2 \otimes u_2}{u_1} + \frac{g}{2}u_1^2 \I_{\R^2}\right)$. Because of the singularity at $u_1 = 0$, we rather consider the - equivalent - non-conservative form of the Shallow-Water equations in section~\ref{Appendix-SW}, where existence and uniqueness of solutions is obtained for the state equation and its linearized version. This leads to the existence and uniqueness of solutions for the conservative system and its linearized version, and thus to the fulfillment of the assumptions of Theorem~\ref{thCtS}.

\paragraph{Subsonic regime.}
In this paper we do not address the possibility of shocks for the solutions of the conservation law. This leads us to assume conditions for the matrix field $F'(u)$. For the sake of simplicity, let us discuss this point in dimension 1. In that case the eigenvalues of the matrix $F'(u)$ at a point $u = (H,Hv)$ are given by $v \pm \sqrt{gH}$ (see~\cite{Dubois1999} for the details). Further we note that the point $\eta$ at which the maximum is reached at the final time $T$ is associated with a flow whose velocity is $v$, and so the trajectory of this point, that we denote $(\eta(t))_{t\in(0,T)}$, satisfies the following backward differential equation:
\begin{eqnarray}
\dot{\eta}(t) = v(\eta(t),t), \ \eta(T) = \eta, & & t \in(0,T). \label{subsonic1}
\end{eqnarray}
One can check that a sufficient condition for the invertibility of the characteristics is the following:
\begin{eqnarray}
	|v-\dot{\eta}| & < & \sqrt{gH}. \label{subsonic2}
\end{eqnarray}
Let us describe a simple way to derive this, in the whole space domain $\R$. The idea is to make the change of unknowns $(\tilde{H},\tilde{v})(y,t) = (H,v)(x-\eta(t),t)$, in order to transform the boundary condition~\eqref{subsonic1} at the moving point $x = \eta(t)$ to the boundary condition $\tilde{v}(0,t) = \dot{\eta}(t)$ at $y=0$, and then decouple the geometry from $\eta(t)$. The system satisfied by $\tilde{u} := (\tilde{H},\tilde{H}\tilde{v})$ is the same as~\eqref{eqSW1Dabsform}, with the modified law $\tilde{F}(\tilde{u}) := \left(\tilde{H}\tilde{v} -\dot{\eta}\tilde{H}, \frac{g}{2}\tilde{H}^2 + \tilde{H}\tilde{v}^2 -\dot{\eta}\tilde{H}\tilde{v}\right) = \left(\tilde{u}_2 - \dot{\eta}\tilde{u}_1 , \frac{g}{2}\tilde{u}^2_1 + \frac{\tilde{u}_2^2}{\tilde{u}_1} - \dot{\eta} \tilde{u}_2 \right)$. The eigenvalues of $\tilde{F}'(\tilde{u})$ are given by $\tilde{v}-\dot{\eta} \pm \sqrt{g\tilde{H}}$. They are invariant under the transformation used for the change of unknown, and thus the invertibility condition is~\eqref{subsonic2} given above. This condition can be related to the so-called {\it Froude} number, whose modified expression -- in presence of the control inducing $\dot{\eta}$ -- is given by $|v-\dot{\eta}|/\sqrt{gH}$. As a consequence, the cost parameter cannot be chosen too small, in order to avoid to have too large velocities, and so to avoid shocks. The consideration of shocks in this problem demands specific methods, theoretically and numerically. They are not treated in this paper.

\subsection{Numerical illustration in 1D} \label{sec-SW1D}

In dimension 1 the velocity field is reduced to a scalar function, and the Shallow-Water system writes as
\begin{eqnarray}
	\left\{ \begin{array} {rcl}
		\displaystyle \frac{\p H}{\p t} + \frac{\p}{\p x}(Hv) = 0 & & \text{in $(0,L) \times(0,T)$}, \\[10pt]
		\displaystyle \frac{\p (Hv)}{\p t} + \frac{\p}{\p x}\left(Hv^2 + \frac{g}{2}H^2 \right)
		= \mathds{1}_{\omega}\xi & &
		\text{in $(0,L) \times(0,T)$}, \\[10pt]
		v = 0 & & \text{on $(\{0\}\cup\{L\}) \times(0,T)$}, \\
		H(x,0) = H_0(x), \quad v(x,0) = v_0(x) & & \text{$x \in (0,L)$}.
	\end{array} \right. \label{sysgeneralSW1D}
\end{eqnarray}
The interface $\Gamma$ is reduced to a single point that we denote by $\eta \in (0,L)$. For $u=(H,Hv)$ the function $\phi$ in the terminal cost is given by $\phi(u) = u_1$, and~\eqref{eqconvo} is expressed as
\begin{eqnarray*}
	\delta_{\eta} \ast \phi(u(\cdot,T)) = \phi((H(\eta,T),v(\eta,T)) & = & H(\eta,T).
\end{eqnarray*}
where $\delta_{\eta}$ is the Dirac function at $x=\eta$.

\subsubsection{Change of variable} \label{sec-cv1D}
In dimension 1, the change of variable can be defined explicitly in the form
\begin{eqnarray*}
	\begin{array} {rrcl}
		X:& (0,L) & \rightarrow & (0,L) , \\
		& y & \mapsto & \left\{\begin{array} {ll} ay^2 + by & \text{if } y \leq L/2,\\[10pt]
			(L-y)(cy+d) +L & \text{if } y \geq L/2, \end{array} \right.
	\end{array}
\end{eqnarray*}
with $\displaystyle
	a = \frac{4\eta}{L^3}(L-2\eta), \quad b = \frac{4\eta^2}{L^2}, \quad
	c = \frac{4(L-\eta)}{L^3}(2\eta-L), \quad d = \frac{4\eta}{L^2}(\eta-L)$.\\
This mapping $X$ satisfies $X(0) = 0$, $X(L) = L$, $X(L/2) = \eta$, and its gradient is continuous at $y = L/2$. The verification of the invertibility of this mapping $X$ is left to the reader. A sufficient condition for ensuring that the gradient of $X$ vanishes nowhere is
\begin{eqnarray*}
	\frac{\p X}{\p y}(L/2^+) = \frac{\p X}{\p y}(L/2^-) <
	\min\left(\frac{4\eta}{L}, \frac{4(L-\eta)}{L} \right),
\end{eqnarray*}
which is guaranteed by the choice of the coefficients $a$, $b$, $c$ and $d$ as above, because $\eta /L \in (0,1)$, $(L-\eta)/L \in (0,1)$ and so
\begin{eqnarray*}
	\frac{\p X}{\p y}(L/2^+) = \frac{\p X}{\p y}(L/2^-) = \frac{4}{L^2} \eta(L-\eta)
	< \min\left(\frac{4\eta}{L}, \frac{4(L-\eta)}{L} \right).
\end{eqnarray*}
The mapping $X$ thus defines a smooth bijection from $(0,L)$ onto $(0,L)$. The expression of its inverse can be calculated explicitly.

\subsubsection{Approximation of the Dirac function} \label{sec-IBM1D}
The terminal condition~\eqref{termeqp} of the adjoint system involves a Dirac function. This system can be expressed as
\begin{eqnarray}
\left\{ \begin{array} {rcl}
	-\dot{q} - F'(u)^{\ast} . \nabla q = 0  & & \text{in } (0,L)\times (0,T), \\
	q = 0 & & \text{on } \left( \{0\} \cup \{L\} \right)\times (0,T), \\
	q(\cdot,T) =  \displaystyle \frac{\delta_{\eta}}{\det\nabla X(L/2)}\nabla \phi(u(\eta,T)) & &
	\text{in } (0,L),
\end{array} \right. \label{sysadj1D}
\end{eqnarray}
with $u = (H,Hv)$ and $F'(u) = \left(\begin{matrix} 0 & 1 \\ -v^2+gH & 2v \end{matrix}\right)$. In practice, instead of discretizing the Dirac function in the manner of~\cite{GilesUlbrich1}, we approximate it by a Gaussian function, with the property that 95\% of the mass is contained on an interval, centered around $\eta$, of size approximately equal to the step size $\d x$ (chosen constant). More precisely, the function is the following:
\begin{eqnarray*}
	x & \mapsto & \frac{1}{\sigma \sqrt{2\pi}} \exp\left( -\frac{(x-\eta)^2}{2\sigma^2}\right),
\end{eqnarray*}
with $4\sigma = \d x$. This function is piecewise interpolated on the nodal points of the spatial discretization.

\subsubsection{Algorithm} \label{sec-algo1D}
The numerical simulations we present in this section are performed with finite difference schemes. The numerical solution of the state equation is obtained with the Lax-Wendroff scheme, which is explicit in time and of second-order in space. The solving of the adjoint equation is achieved with the first-order explicit Euler scheme for the discretization in time, and with a second-order centered discretization for the approximation in space. This discretization is the only scheme which worked for us, among all the other schemes we tried for solving the adjoint equation. The terminal condition for the adjoint state is approximated as described in section~\ref{sec-IBM1D}. The gradient steps are performed with the Barzilai-Borwein method (see~\cite{BarBor}). In the description of algorithms the norm $|||\cdot |||$ denotes the Euclidean norm for vectors.

\begin{algorithm}[htpb]
	\begin{description}
		\item[Initialization:] $L = 60$, $T = 10$, $u_0 = (H_0 = 1.5, v_0 = 0.0)$, $\omega = [0,1.2]$, $\alpha = 0.5$, $\xi = 0$, $\eta = 0.5*L$.
		\begin{description}
			\item[Gradient:] Compute $(L_{\tilde{\xi}},L_{\eta})$ as follows:\\
			$\bullet$ Compute $u=(H,Hv)$ and $q$, solutions of system~\eqref{sysgeneralSW1D} and system~\eqref{sysadj1D}, respectively.\\
			$\bullet$ Use formulas of Proposition~\ref{thopt} and Corollary~\ref{corodim12} (with $\kappa=0$) for expressing $J_{\tilde{\xi}}$ and $J_{\eta}$.\\
			Refer to section~\ref{rk-trick} for realization.
			\item[Armijo rule:] Do a line search, \\
			and get a second pair $(\tilde{\xi},\eta)$, for initializing the Barzilai-Borwein algorithm.
			\item[Barzilai-Borwein steps:] While $||| (L_{\tilde{\xi}},L_{\eta}) ||| > 1.e^{-10}$, do gradient steps.\\ Compute the gradient as above.
		\end{description}
	\end{description}
	\caption{Solving the first-order optimality conditions, with the expressions~\eqref{gradientxi}-\eqref{exp-shitty0} and~\eqref{Jeta1}.}\label{algo1D}
\end{algorithm}
\FloatBarrier

\subsubsection{Results} \label{sec-res1D}

The evolution of the height with the control computed with algorithm~\ref{algo1D} is presented in Figure~\ref{fig1D}. The time discretization is made with 2000 steps, and the space discretization with 301 degrees of freedom. The control is distributed on the small interval $\omega = [0,1.2]$.

\begin{minipage}{\linewidth}
	\vspace{-15pt}
	\hspace{-0.025\linewidth}
	\begin{minipage}{0.30\linewidth}
		\begin{figure}[H]
			\includegraphics[trim = 4cm 1.7cm 4cm 1.7cm, clip, scale=0.12]{./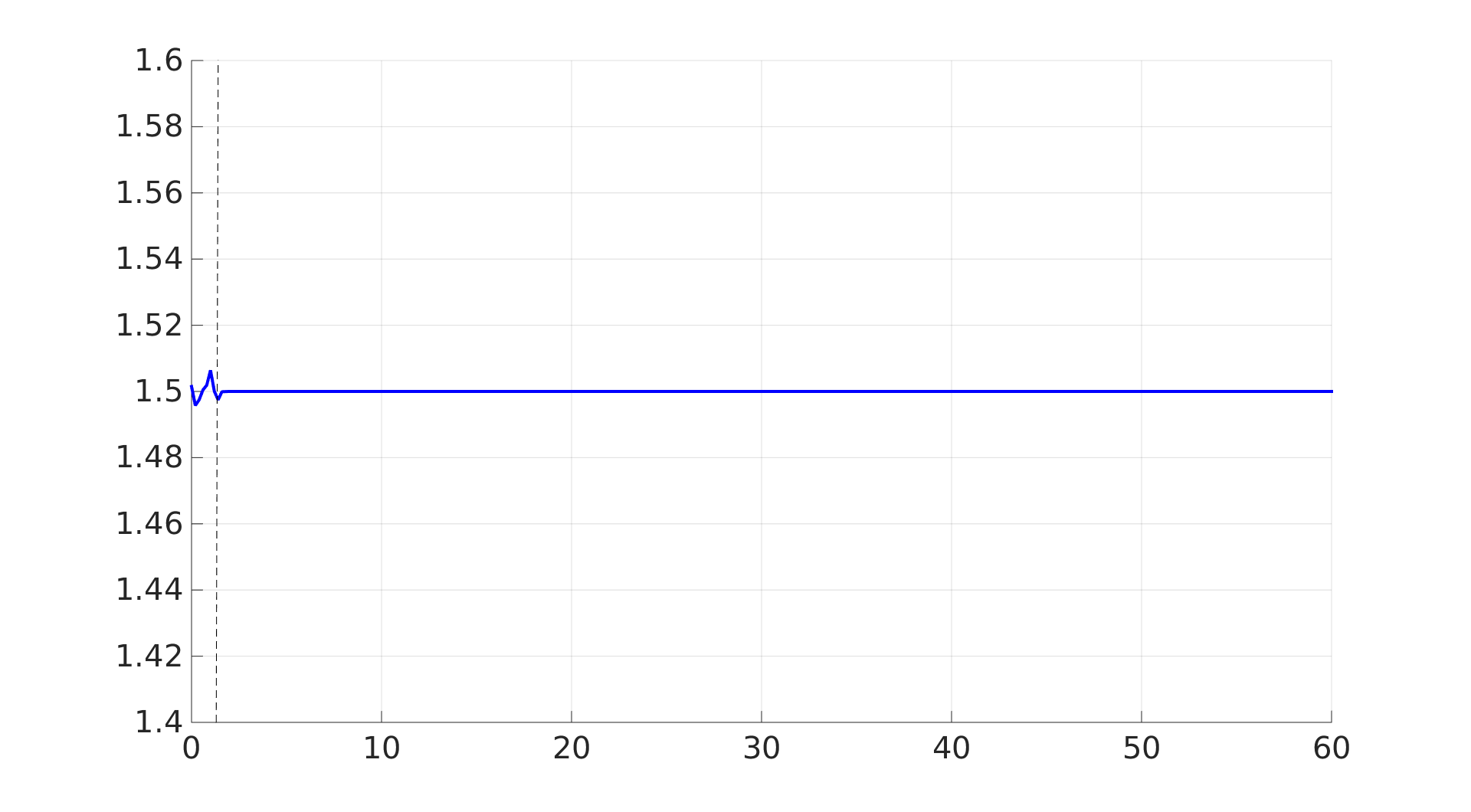}
			\begin{center}\begin{small} $ t = 0.05 $ \end{small}\end{center}
		\end{figure}
	\end{minipage}
	\hspace{0.01\linewidth}
	\begin{minipage}{0.30\linewidth}
		\begin{figure}[H]
			\includegraphics[trim = 4cm 1.7cm 4cm 1.7cm, clip, scale=0.12]{./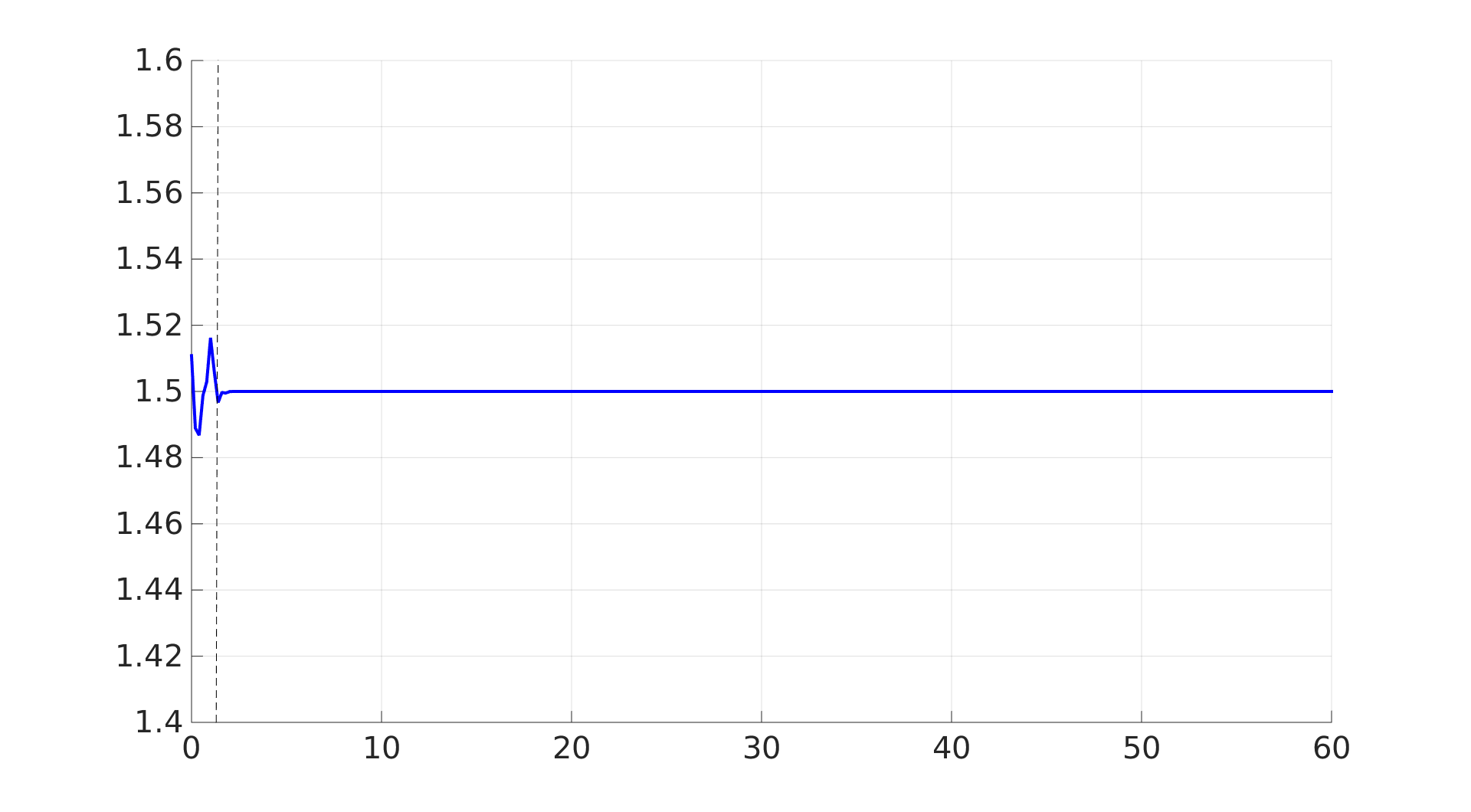}
			\begin{center}\begin{small} $ t = 0.10 $ \end{small}\end{center}
		\end{figure}
	\end{minipage}
	\hspace{0.01\linewidth}
	\begin{minipage}{0.30\linewidth}
		\begin{figure}[H]
			\includegraphics[trim = 4cm 1.7cm 4cm 1.7cm, clip, scale=0.12]{./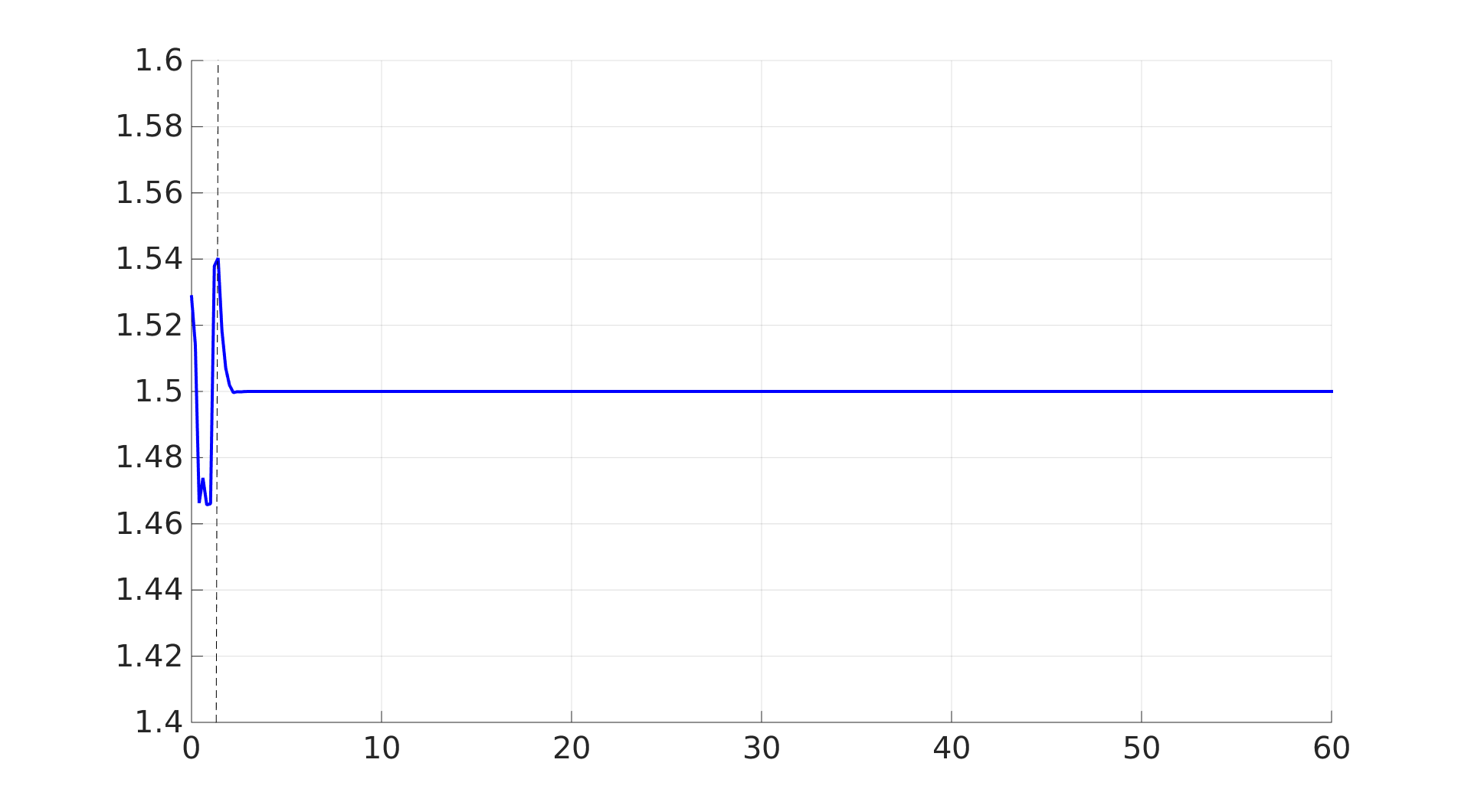}
			\begin{center}\begin{small} $ t = 0.25 $ \end{small}\end{center}
		\end{figure}
	\end{minipage}
	\\ \vspace{-15pt}
	\hspace{-0.025\linewidth}
	\begin{minipage}{0.30\linewidth}
		\begin{figure}[H]
			\includegraphics[trim = 4cm 1.7cm 4cm 1.7cm, clip, scale=0.12]{./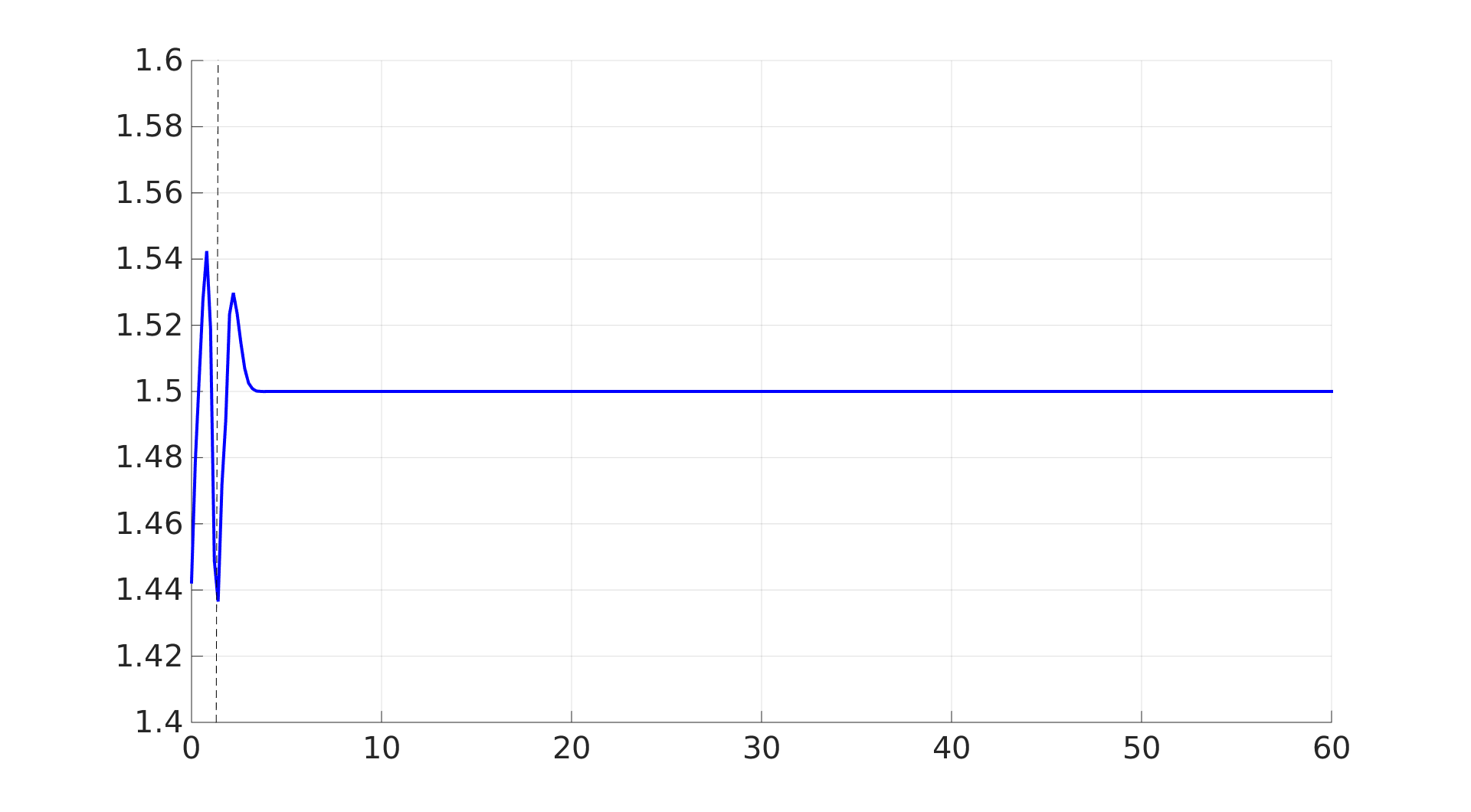}
			\begin{center}\begin{small} $ t = 0.50 $ \end{small}\end{center}
		\end{figure}
	\end{minipage}	
	\hspace{0.01\linewidth}
	\begin{minipage}{0.30\linewidth}
		\begin{figure}[H]
			\includegraphics[trim = 4cm 1.7cm 4cm 1.7cm, clip, scale=0.12]{./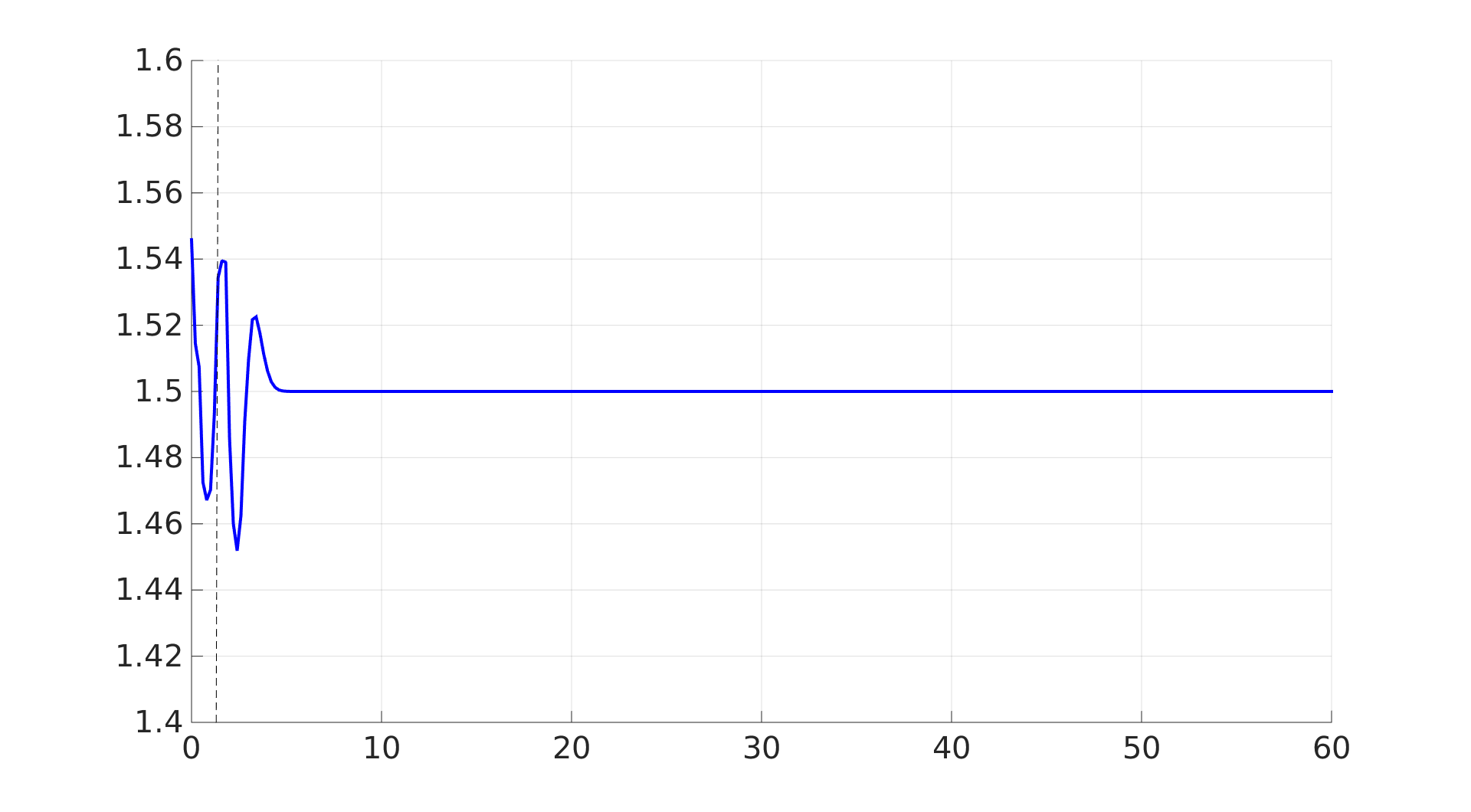}
			\begin{center}\begin{small} $ t = 0.80 $ \end{small}\end{center}
		\end{figure}
	\end{minipage}
	\hspace{0.01\linewidth}
	\begin{minipage}{0.30\linewidth}
		\begin{figure}[H]
			\includegraphics[trim = 4cm 1.7cm 4cm 1.7cm, clip, scale=0.12]{./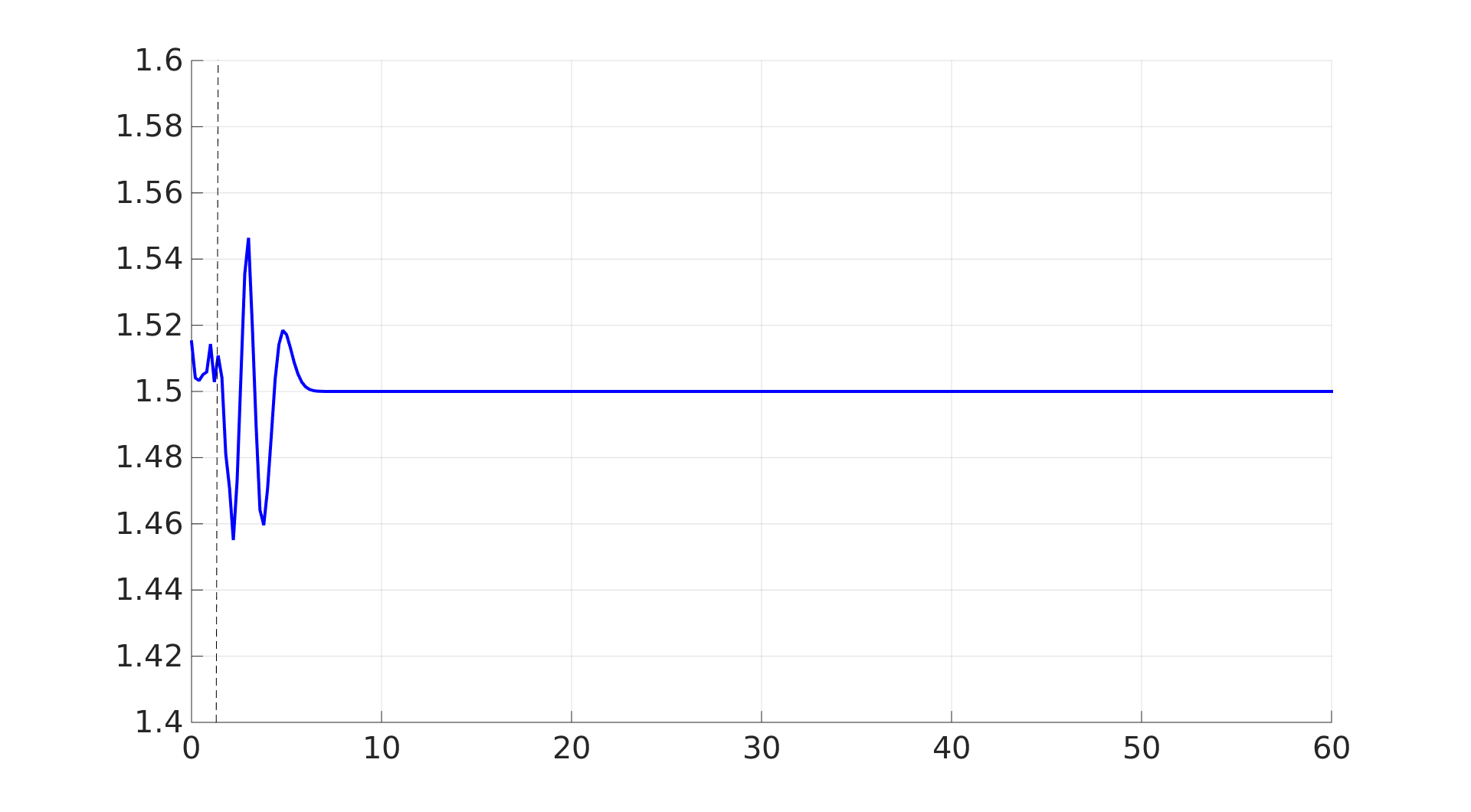}
			\begin{center}\begin{small} $ t = 1.20 $ \end{small}\end{center}
		\end{figure}
	\end{minipage}
	\\ \vspace{-15pt}
	\hspace{-0.025\linewidth}
	\begin{minipage}{0.30\linewidth}
		\begin{figure}[H]
			\includegraphics[trim = 4cm 1.7cm 4cm 1.7cm, clip, scale=0.12]{./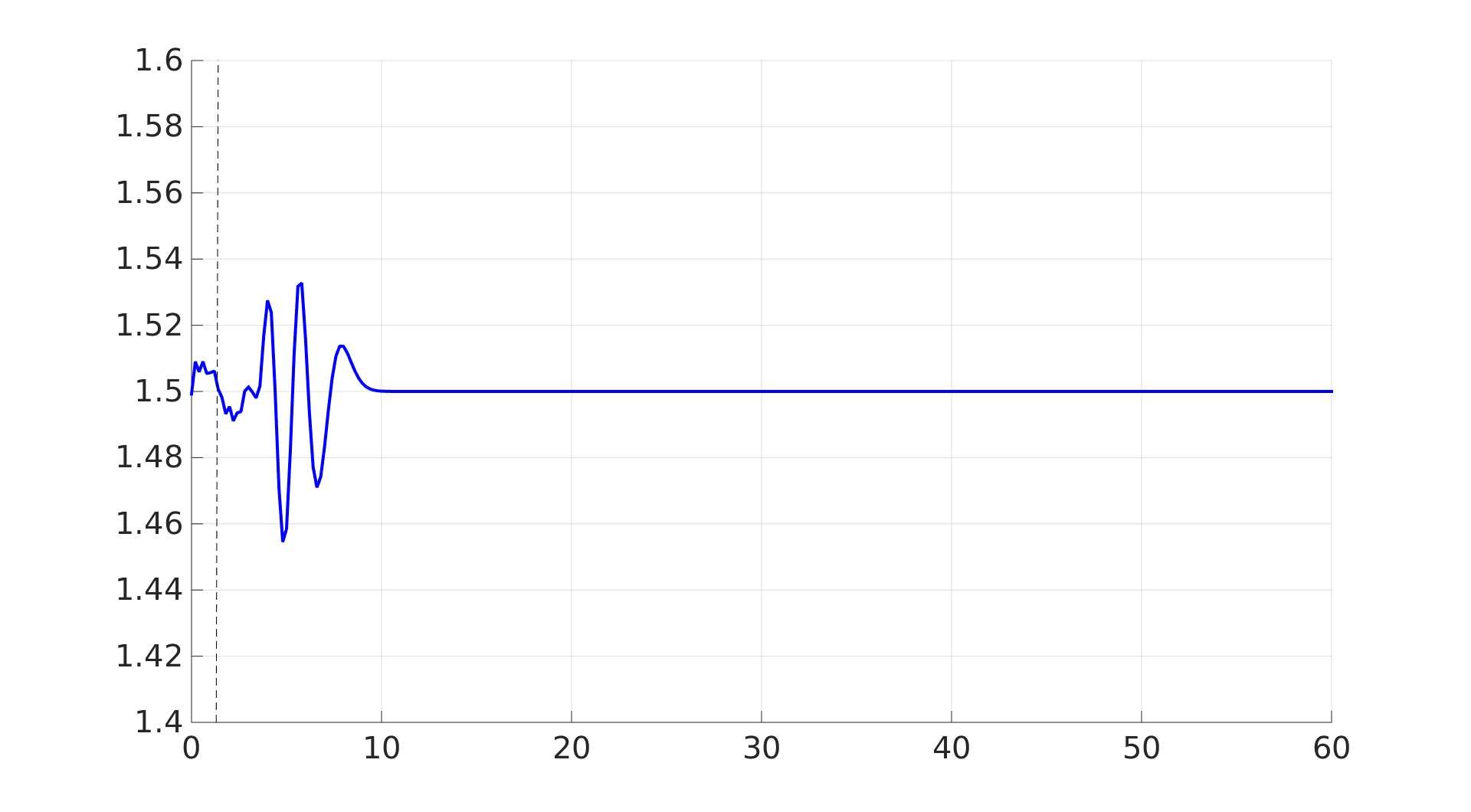}
			\begin{center}\begin{small} $ t = 2.00 $ \end{small}\end{center}
		\end{figure}
	\end{minipage}	
	\hspace{0.01\linewidth}
	\begin{minipage}{0.30\linewidth}
		\begin{figure}[H]
			\includegraphics[trim = 4cm 1.7cm 4cm 1.7cm, clip, scale=0.12]{./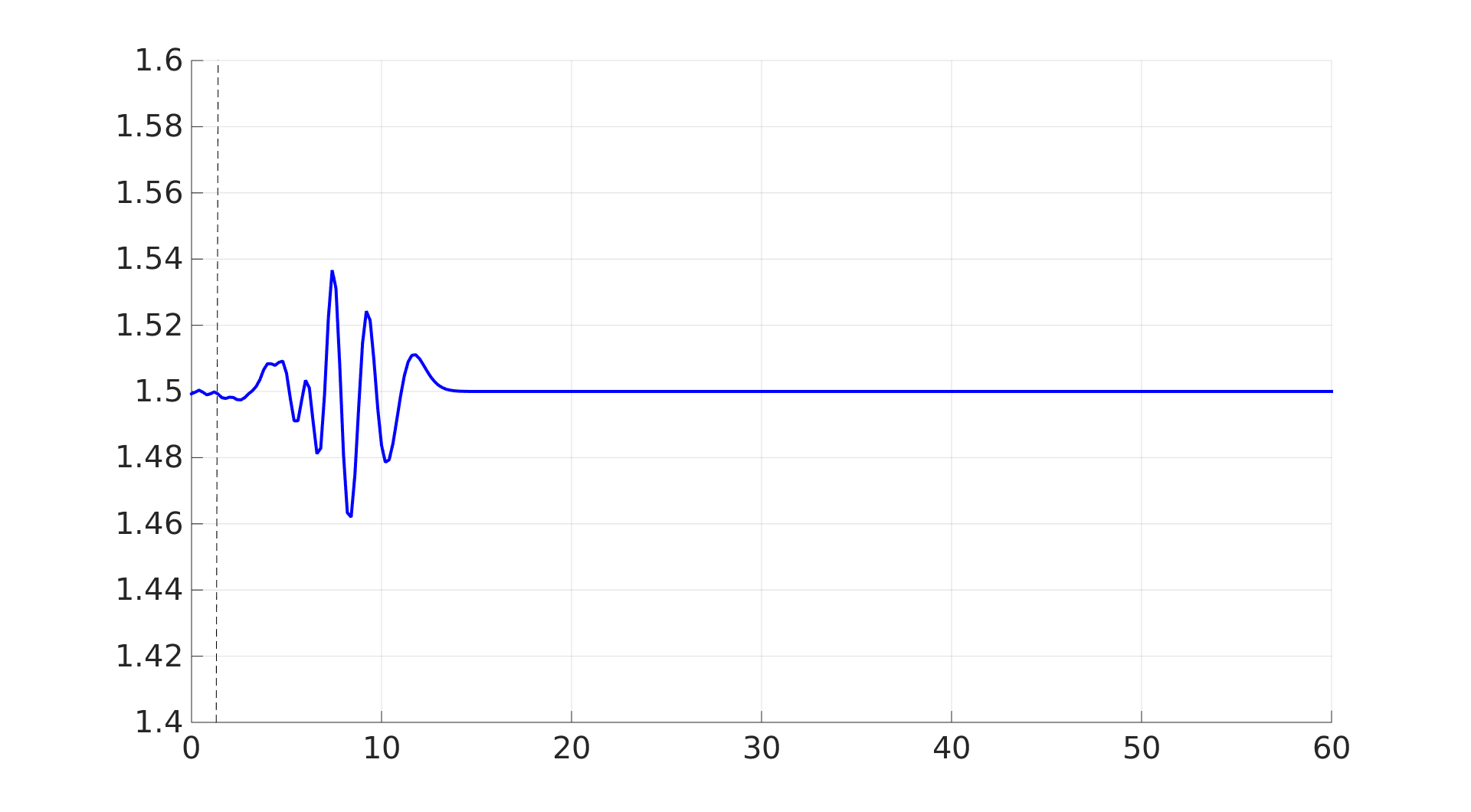}
			\begin{center}\begin{small} $ t = 3.00 $ \end{small}\end{center}
		\end{figure}
	\end{minipage}
	\hspace{0.01\linewidth}
	\begin{minipage}{0.30\linewidth}
		\begin{figure}[H]
			\includegraphics[trim = 4cm 1.7cm 4cm 1.7cm, clip, scale=0.12]{./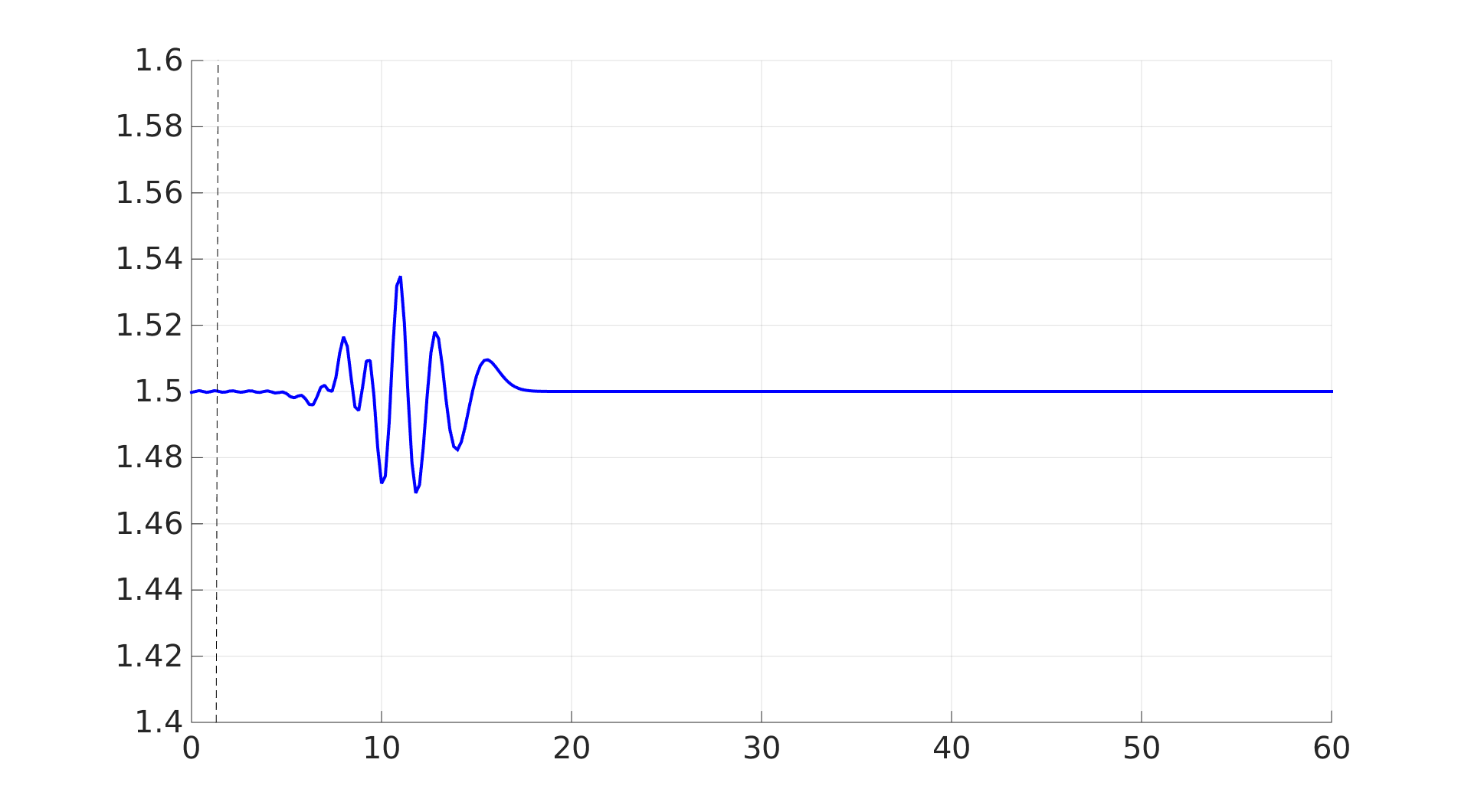}
			\begin{center}\begin{small} $ t = 4.00 $ \end{small}\end{center}
		\end{figure}
	\end{minipage}
	\\ \vspace{-15pt}
	\hspace{-0.025\linewidth}
	\begin{minipage}{0.30\linewidth}
		\begin{figure}[H]
			\includegraphics[trim = 4cm 1.7cm 4cm 1.7cm, clip, scale=0.12]{./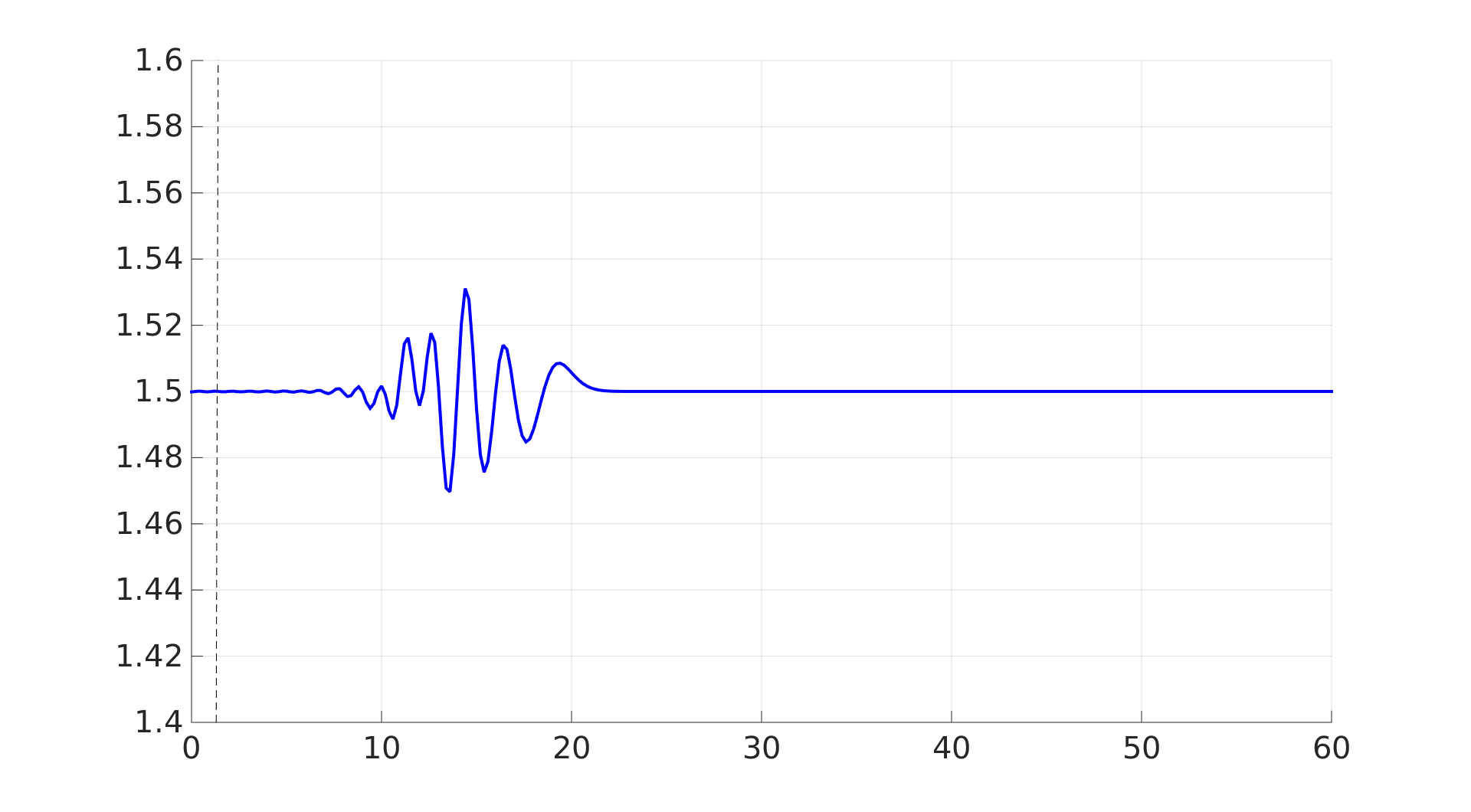}
			\begin{center}\begin{small} $ t = 5.00 $ \end{small}\end{center}
		\end{figure}
	\end{minipage}
	\hspace{0.01\linewidth}
	\begin{minipage}{0.30\linewidth}
		\begin{figure}[H]
			\includegraphics[trim = 4cm 1.7cm 4cm 1.7cm, clip, scale=0.12]{./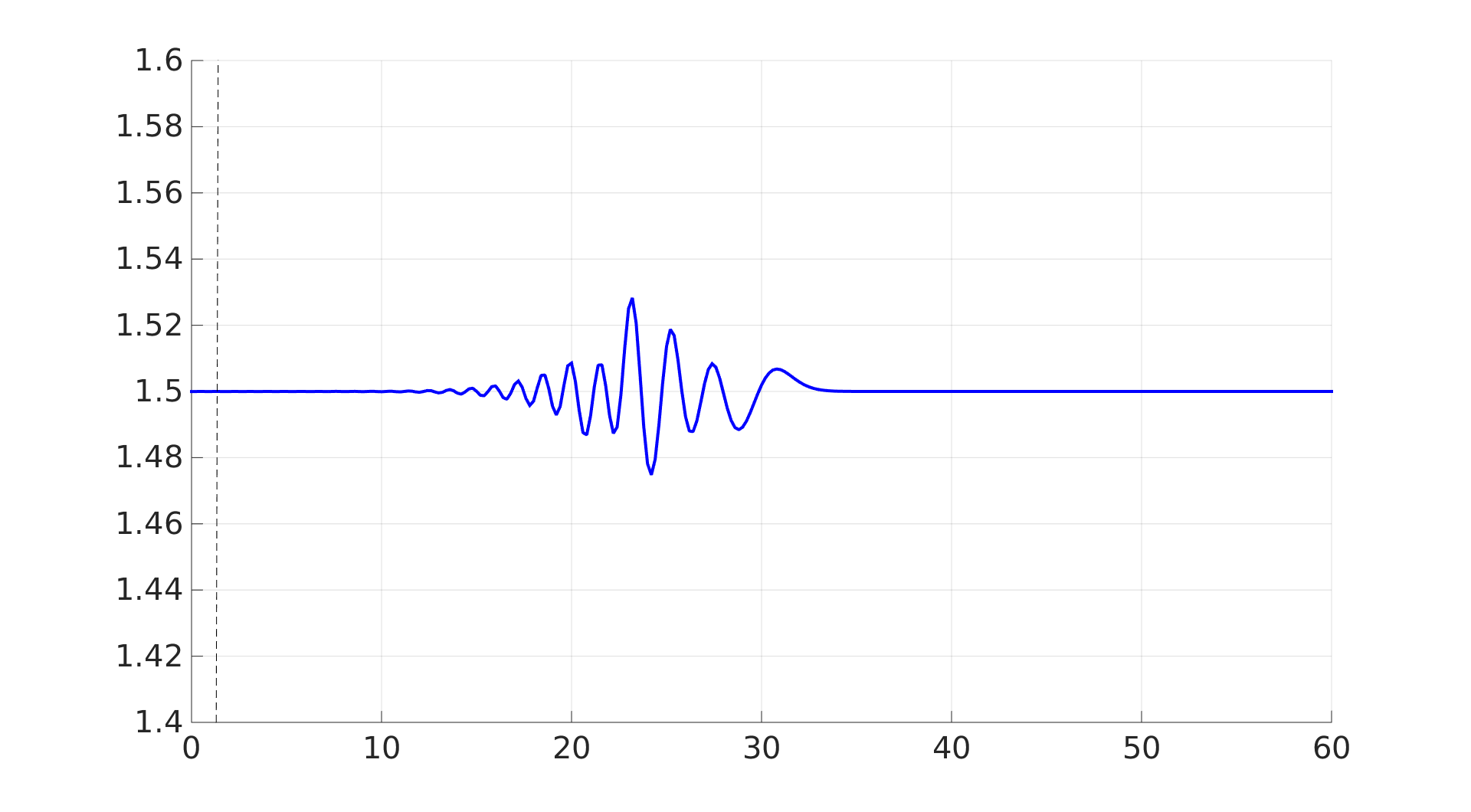}
			\begin{center}\begin{small} $ t = 8.00 $ \end{small}\end{center}
		\end{figure}
	\end{minipage}	
	\hspace{0.01\linewidth}
	\begin{minipage}{0.30\linewidth}
		\begin{figure}[H]
			\includegraphics[trim = 4cm 1.7cm 4cm 1.7cm, clip, scale=0.12]{./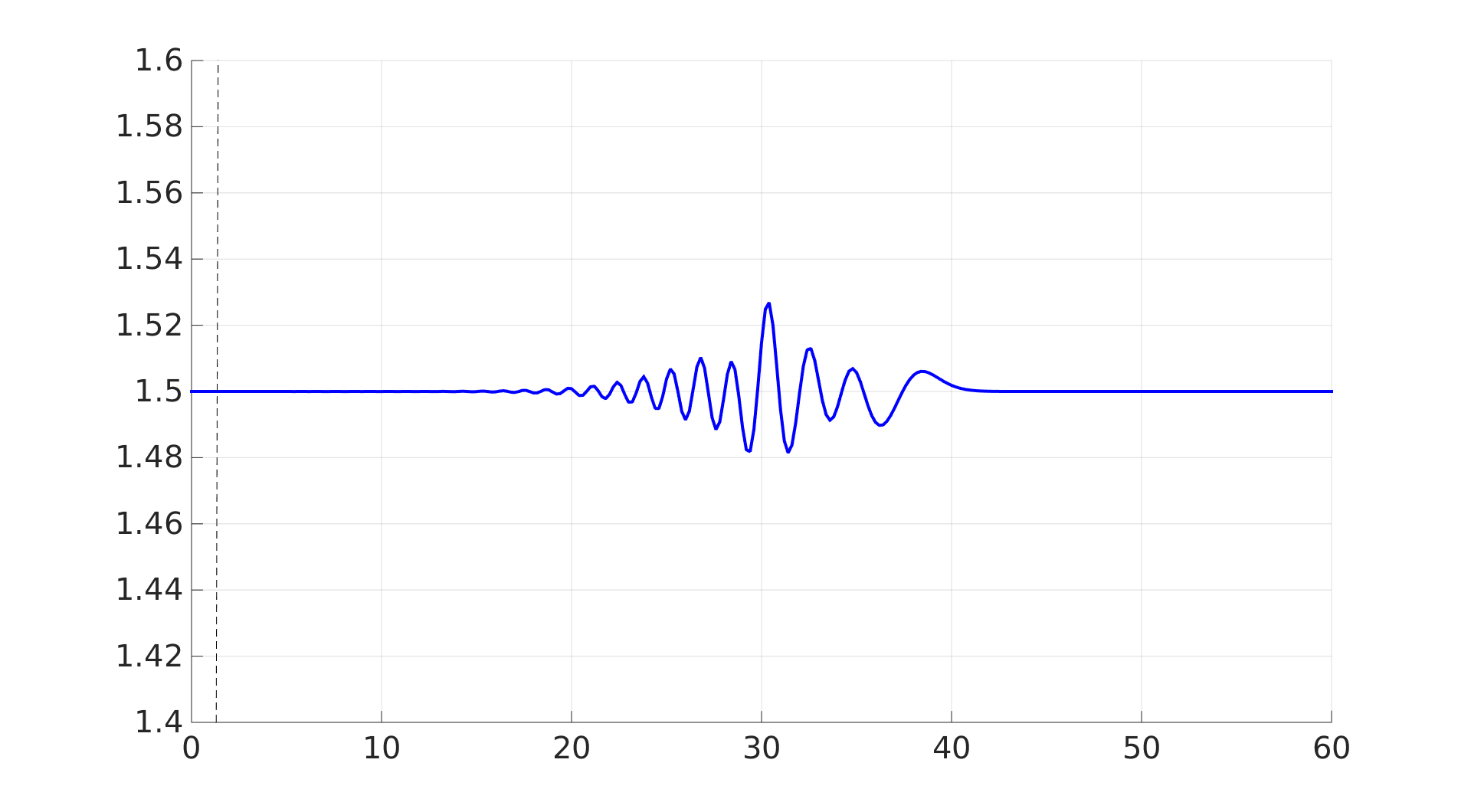}
			\begin{center}\begin{small} $ t = 10.00 $ \end{small}\end{center}
		\end{figure}
	\end{minipage}
\end{minipage}
\begin{minipage}{\linewidth}
	\begin{figure}[H]
		\caption{Time evolution of the height $H$ on $(0,L)$ for the controlled 1D Shallow-Water Equations. \textcolor{white}{}\label{fig1D}}
	\end{figure}
\end{minipage}\\
\hfill \\
\FloatBarrier

We observe that the maximum is reached for a wave whose peak is located at $\eta \approx 30.40$. Smaller waves around it are created, possibly for exchanging mass with the main one, and thus avoiding dispersion phenomena. Sparsity in time is observed for the control function. Its values are all close to the machine precision after $t = 4.00$. This could be explained by the fact that once this group of waves is created on this part on the left of domain, they are transported, and thus it is useless to create afterwards another one, because this latter would not have a higher velocity and so could not influence the first waves.

\subsection{Extension to the 2D case} \label{sec-SW2D}
We extend the problem treated in section~\ref{sec-SW1D} to the 2D case. The velocity field has now two components, and is denoted by $v = (v_1,v_2)^T$. We write the system under the conservative form as
\begin{eqnarray}
	\left\{
	\begin{array} {rcl}
		\displaystyle \frac{\p H}{\p t} + \divg (Hv) = 0 & & \text{in } \Omega \times (0,T), \\
		\displaystyle \frac{\p }{\p t}(Hv) + \divg (Hv \otimes v) + \nabla \left( \frac{g}{2}H^2 \right) = \mathds{1}_{\omega}\xi& & \text{in } \Omega \times (0,T), \\
		v_2 = 0 & & \text{on } \p \Omega \times (0,T).
	\end{array} \right. \label{eqsuper2D}
\end{eqnarray}
The initial condition is like in 1D, namely $u(\cdot,0) = (H_0, 0)$, where $H_0 >0$ is constant. Here again we maximize the height of a wave at time $T$. In this 2D context, this wave is represented by some injective curve $\Gamma$, deformed from a reference curve $\Gamma_0$. For the terminal cost of Problem~\eqref{mainpb}, for the unknown $u = (H, Hv)$ we take $\phi(u) = u_1^2 = H^2$, so that the expression~\eqref{eqconvo} gives
\begin{eqnarray*}
	\delta_{\Gamma[\eta]} \ast \phi(u(\cdot,T)) = \int_{\Gamma[\eta]} H(x,T)^2\d \Gamma[\eta](x).
\end{eqnarray*}

\subsubsection{On the change of variable} \label{sec-cv2D}
As mentioned in section~\ref{sec-trans}, we can define explicitly a deformation $X_{\mathcal{S}_0}$ on a subset $\mathcal{S}_0 \subset \Omega$ around $\Gamma_0$. As reference configuration we choose for $\Gamma_0$ is the straight line $(0,\ell)$. We characterize the deformation of $\Gamma_0$ with the curvature $\gamma$ of $\Gamma = X_{\mathcal{S}_0}(\Gamma_0)$. This parameterization of the deformation can be found in~\cite[section~7]{ARMA2008} for instance, in a different context, dealing with the modeling of swimming of a deformable body. We briefly explain here the construction of $X_{\mathcal{S}_0}$ in 2D, and invite the reader to refer to~\cite[Chapter~4]{ThesisCourt} for more details and also for the extension to the 3D case. The construction consists in parameterizing $\Gamma_0$ by its curvilinear abscissa $s$. We define a tubular neighborhood around $\Gamma_0$, given by a function $s\mapsto \varepsilon(s)$. For instance, we can choose $\varepsilon(s) = \sqrt{s(L-s)}$, like in Figure~\ref{fig-fish}.\\

\begin{minipage}{\linewidth}
	\centering
	\begin{minipage}{0.45\linewidth}
		\begin{figure}[H]
			\includegraphics[trim = 4cm 9cm 0cm 10cm, clip, scale=0.5]{./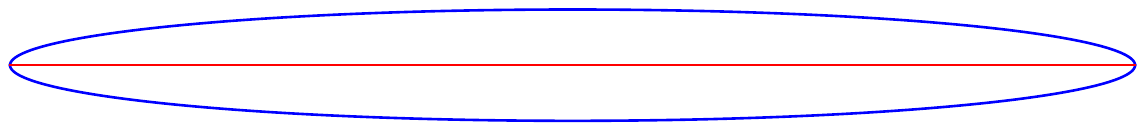}
		\end{figure}
	\end{minipage}
	\vrule
	\hspace{0.05\linewidth}
	\begin{minipage}{0.45\linewidth}
		\begin{figure}[H]
			\includegraphics[trim = 4cm 9cm 0cm 10cm, clip, scale=0.5]{./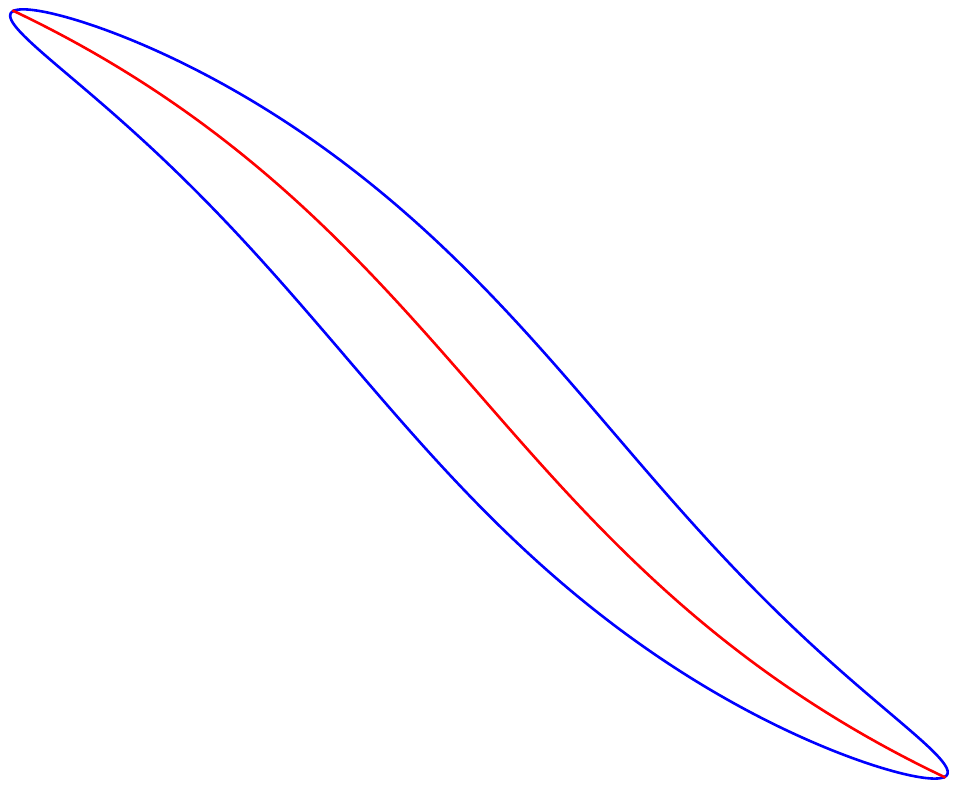}
		\end{figure}
	\end{minipage}
\end{minipage}
\begin{minipage}{\linewidth}
	\begin{figure}[H]
		\caption{Tubular neighborhood and its deformation, defined by the deformation of $\Gamma_0$ (left, in red) into $\Gamma$ (right, in red), with $\varepsilon(s) = 0.1*\sqrt{s(\ell-s)}$ and $\gamma(s) = -\frac{\pi}{2\ell^2}(\ell - 2s)$ (right).\label{fig-fish}}
	\end{figure}
\end{minipage}\\
\hfill \\
\FloatBarrier

The curvature $\gamma$ of $\Gamma$ is chosen as the parameter $\eta$, and the space $\mathscr{G}$ of parameters is chosen as a finite dimensional subspace of $\mathcal{C}^{\infty}(0,L;\R)$. It defines an angle
\begin{eqnarray*}
	\alpha(s) & = & \int_0^s \gamma(\varsigma) \d \varsigma, \quad s \in (0,\ell).
\end{eqnarray*}
The expression of the deformation at a point $y = (y_1, y_2) \in \mathcal{S}_0$ is defined by
\begin{eqnarray*}
	X_{\mathcal{S}_0}(y) & = & \int_0^{y_1} \left(
	\begin{matrix}
		\cos \alpha(s) \\
		\sin \alpha(s)
	\end{matrix}\right)\d s +
	y_2 \left(
	\begin{matrix}
		-\sin \alpha(y_1) \\
		\cos \alpha(y_1)
	\end{matrix}\right), \quad y_2 \in [-\varepsilon(y_1), \varepsilon(y_1)], \ y_1 \in [0,\ell].
\end{eqnarray*}
It is easy to calculate the gradient of $X_{\mathcal{S}_0}$ and its determinant:
\begin{eqnarray*}
\nabla X_{\mathcal{S}_0}(y) = \left( \begin{matrix}
	(1-y_2\gamma(y_1))\cos \alpha(y_1) & -\sin \alpha(y_1) \\
	(1-y_2\gamma(y_1))\sin \alpha(y_1) & \cos \alpha(y_1)
\end{matrix} \right), & &
\det \nabla X_{\mathcal{S}_0}(y) = 1-y_2\gamma(y_1).
\end{eqnarray*}
In the numerical simulations we actually consider a translated and rotated line as $\Gamma_0$. If the initial configuration is the straight line obtained as $h + R\Gamma_0$, where $h\in \R^2$ and $R\in \R^{2\times 2}$ is orthogonal, then the corresponding deformation is obtained with the following change of frame $y \mapsto h+ RX_{\mathcal{S}_0}(R^T(y-h))$. More generally, for the initial configuration, if we consider the curve obtained by some deformation $\overline{X}$ as $\overline{X}(\Gamma_0)$, then the corresponding change of variables is given by $\overline{X} \circ X_{\mathcal{S}_0} \circ \overline{X}^{-1}$.

The determinant of the gradient of these deformations is given by $\det \nabla X_{\mathcal{S}_0}(y)  =  1 - y_2\gamma(y_1)$. Since $y_2 \in [-\varepsilon(y_1), \varepsilon(y_1) ]$, a sufficient condition for the invertibility of the mapping $X_{\mathcal{S}_0}$ is then given by
\begin{eqnarray*}
\sup_{s\in (0,\ell)} |\varepsilon(s) \gamma(s) | & < & 1.
\end{eqnarray*}
Note that it is easy to verify that such a family of deformation preserves the volume, and thus the compatibility condition~\eqref{condsysdet} is satisfied. The deformation can be extended to the whole domain by solving problem~\eqref{sysdet}. Referring to discussion of section~\ref{rk-trick2}, in practice we actually do not need to solve this extension problem.\\

The geometric parameter $\eta$ determining the curve $\Gamma$ is chosen as the curvature function $\gamma$. For numerical realization we discretize it within a space of finite dimension. For instance we can look for $\gamma$ in the form
\begin{eqnarray}
\gamma(s) & = & \sum_{r=1}^4 a_r\cos\left(\frac{2\pi r}{f}s \right), \label{formulacurv}
\end{eqnarray}
for the fixed frequency $f = 1/\ell$. In this case $\eta$ becomes $(a_r)_{r\in \{1 , 2 , 3, 4\}}$. It is made of the amplitudes introduced as degrees of freedom in the decomposition above.\\

Note finally that if the reference curve $\Gamma_0$ is chosen as an horizontal straight line, then we can show that $|(\cof \nabla X_{\mathcal{S}_0})n|_{\R^2} = |1-y_2\gamma(y_1)| = 1$ on $\Gamma_0$ (because $y_2 = 0$), so that this factor disappears in the identity~\eqref{formula-Allaire}, justifying the simplification made at the end of section~\ref{sec-trans}. Besides we also have $\det \nabla X_{\mathcal{S}_0} \equiv 1$ on $\Gamma_0$, and thus $\det \nabla X_{\mathcal{S}_0}(Y_{\mathcal{S}_0}(\cdot)) \equiv 1$ on $\Gamma[\eta]$, justifying the simplification made from~\eqref{termeqp} to~\eqref{termeqpbis} for the terminal cost of the adjoint system.

\subsubsection{The terminal condition} \label{sec-term2D}
The terminal condition~\eqref{termeqpbis} of the adjoint system in 2D involves a line source along the curve $\Gamma$. The adjoint system becomes
\begin{eqnarray}
\left\{ \begin{array} {rcl}
-\dot{q} - F'(u)^{\ast} . \nabla q = 0  & & \text{in } \Omega\times (0,T), \\
q = 0 & & \text{on } \p \Omega \times (0,T), \\
q(\cdot,T) =  \displaystyle
\delta_{\Gamma} \ast \nabla \phi(u(\cdot,T))  & &
\text{in } \Omega,
\end{array} \right. \label{sysadj2D}
\end{eqnarray}
with $u = (H,Hv)$, $F(u) = \left(u_2, \frac{u_2 \otimes u_2}{u_1} + \frac{g}{2}u_1^2 \I_{\R^d}\right)$. The line source $\delta_{\Gamma}$ is approximated in the same fashion as in section~\ref{sec-IBM1D}, except that in this 2D case the geometry is more complex. We first approximate the Dirac function on the reference curve $\Gamma_0 = (0,\ell) \times \{ L/2 \}$, by considering the function
\begin{eqnarray*}
\tilde{\delta}_{\Gamma_0}:y = (y_1,y_2) & \mapsto &
\left\{ \begin{array} {ll}
\displaystyle \frac{1}{\sigma \sqrt{2\pi}} \exp\left( -\frac{(y_2-L/2)^2}{2\sigma^2}\right) &
\text{if } y_1 \in (0, \ell), \\[10pt]
0 & \text{otherwise},
\end{array} \right.
\end{eqnarray*}
with $4\sigma = \d x$, where $\d x$ is the step size (for both space variables). Here again, 95\% of the mass is contained on a strip, centered around $\Gamma_0$, whose width is approximately equal to $\d x$ (chosen constant). We then compose this approximation with the inverse $Y$ of the change of variables $X$. The expression for $X$ is known explicitly. In practice, for evaluating $Y(x)$ for a given point $x\in \Omega$ of the grid, we look for $y\in \Omega$ on the grid such that $ |x- X(y) |_{\R^2}$ is minimal. With this pair $(x,y)$, we get the approximation $\delta_{\Gamma}(x) \approx \tilde{\delta}_{\Gamma_0}(y)$. This is a first-order approximation for the computation of the inverse. For a second-order approximation - that we do not detail here - we can get inspired by the techniques of section~\ref{sec-IBM2D}, based on the consideration of barycentric coefficients.

\subsubsection{Integration method with an immersed boundary approach} \label{sec-IBM2D}

One of the difficulties of the numerical implementation lies in the fact that the set $\mathcal{S}_0$ does not fit the (Cartesian) grid of the spatial discretization. We have to localize the boundary of $\mathcal{S}_0$ (given by the function $s \mapsto \pm \varepsilon(s)$) with respect to the grid, in order to detect the degrees of freedom inside of $\mathcal{S}_0$, or close to $\mathcal{S}_0$, and define an approximated integration method on $\mathcal{S}_0$. For this purpose, we are inspired by~\cite[section~3.2.3]{Farhat2012}, where approximations are proposed, based on the use of barycentric coefficients. Note that the curve $\Gamma_0$ has its own discretization. Its degrees of freedom are $(s_k)_k$, with $s_k = k*ds$. They have to be located with respect to the grid, for which the step size is denoted by $\d x$.

A second-order method consists in performing an adapted trapeze method as follows. Given a point of $\Gamma_0$ for which the local abscissa is denoted by $s\in (0,\ell)$, we detect the four points of the grid between which the points $(s,\pm\varepsilon(s))$ are located. These points are denoted by $(M^{\pm}_i(s))_{i\in \{1,2,3,4\}}$, and their coordinates are given by
\begin{eqnarray*}
	M^{\pm}_1(s) = (m(s)*ds , n^{\pm}(s)*\d x) & & M^{\pm}_2(s) = ( (m(s)+1)*ds , n^{\pm}(s)*\d x) \\
	M^{\pm}_3(s) = ( m(s)*ds , (n^{\pm}(s)+1)*\d x) & & M^{\pm}_4(s) = ( (m(s)+1)*ds , (n^{\pm}(s)+1)*\d x),
\end{eqnarray*}
where $m(s)$ and $n^{\pm}(s)$ are positive integers given by
\begin{eqnarray*}
	m(s) = \left\lfloor \frac{s}{ds} \right\rfloor, & &
	n^{\pm}(s) = \left\lfloor \frac{\pm\varepsilon(s)}{\d x} \right\rfloor.
\end{eqnarray*}
By this means we approximate the value of a function $\varphi$ at the fictitious points $M^{\pm}_{13}(s)$ and $M^{\pm}_{24}(s)$ of respective coordinates $(m(s)*ds, \pm\varepsilon(m(s)*ds))$ and $((m(s)+1)*ds, \pm\varepsilon((m(s)+1)*ds))$, by the use of barycentric coordinates, as follows:
\begin{eqnarray*}
	\varphi(M^{\pm}_{13}(s)) & \approx &
	\frac{(n^{\pm}(s)+1)*ds \mp \varepsilon(m(s)*ds)}{ds} \varphi(M^{\pm}_1) +
	\frac{\pm\varepsilon(m(s)*ds) - n^{\pm}(s)*ds}{ds} \varphi(M^{\pm}_3) , \\
	\varphi(M^{\pm}_{24}(s)) & \approx &
	\frac{(n^{\pm}(s)+1)*ds \mp \varepsilon((m(s)+1)*ds)}{ds} \varphi(M^{\pm}_2) +
	\frac{\pm\varepsilon((m(s)+1)*ds) - n^{\pm}(s)*ds}{ds} \varphi(M^{\pm}_4) .
\end{eqnarray*}
See Figure~\ref{fig-ibm}. Note that these coordinates are subject to change of frame, given by rotation and translation.
\begin{minipage}{\linewidth}
\begin{center}
\scalebox{0.5}{
\begin{picture}(0,0)
\includegraphics{./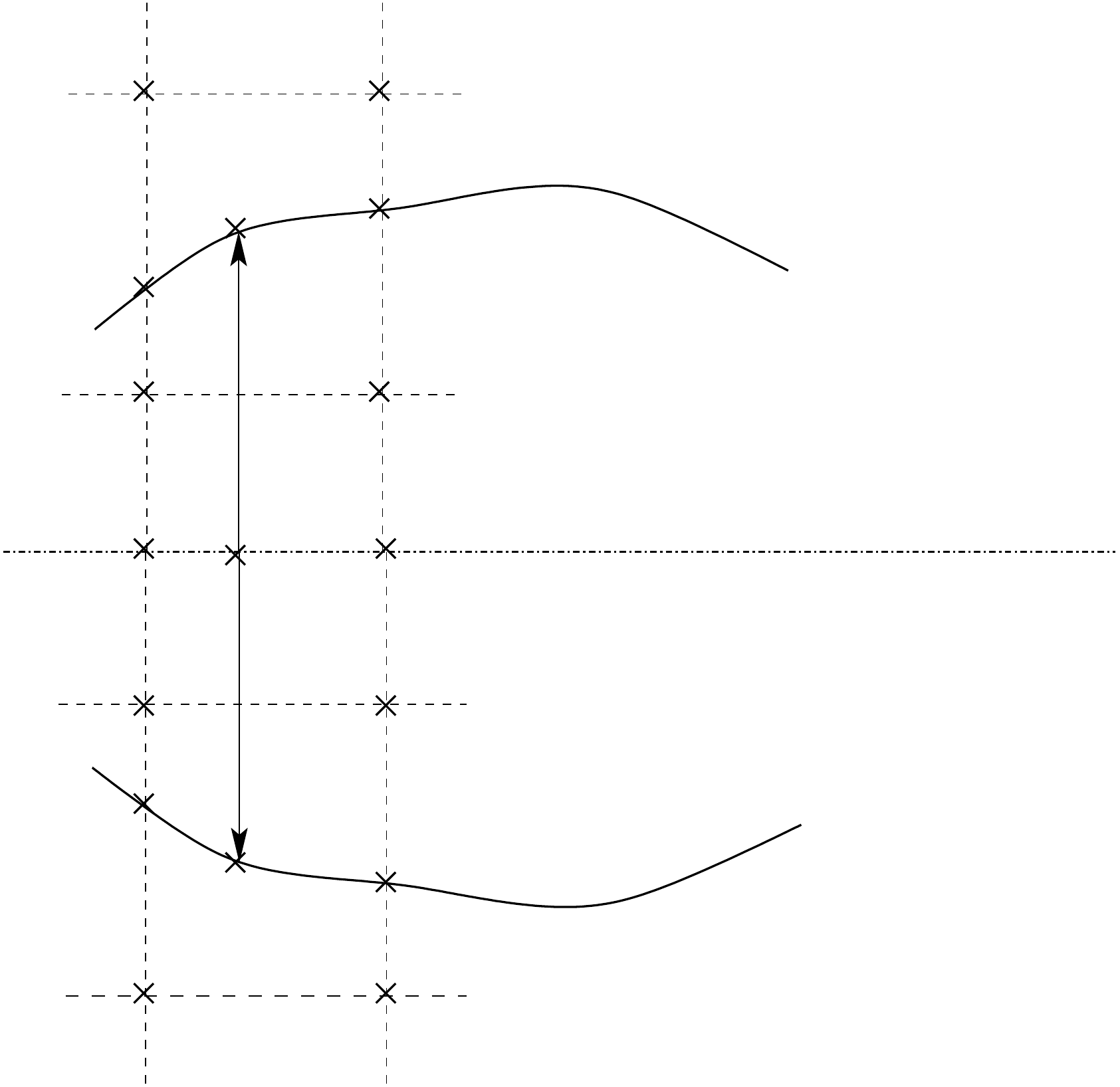}
\end{picture}
\setlength{\unitlength}{4144sp}
\begin{picture}(7694,7472)(1869,-7521)
\put(2296,-511){\makebox(0,0)[lb]{\smash{{\SetFigFont{12}{14.4}{\rmdefault}{\mddefault}{\updefault}{\color[rgb]{0,0,0}$M_3^+$}
}}}}
\put(4591,-511){\makebox(0,0)[lb]{\smash{{\SetFigFont{12}{14.4}{\rmdefault}{\mddefault}{\updefault}{\color[rgb]{0,0,0}$M_4^+$}
}}}}
\put(2296,-2581){\makebox(0,0)[lb]{\smash{{\SetFigFont{12}{14.4}{\rmdefault}{\mddefault}{\updefault}{\color[rgb]{0,0,0}$M_1^+$}
}}}}
\put(4591,-2581){\makebox(0,0)[lb]{\smash{{\SetFigFont{12}{14.4}{\rmdefault}{\mddefault}{\updefault}{\color[rgb]{0,0,0}$M_2^+$}
}}}}
\put(3601,-1996){\makebox(0,0)[lb]{\smash{{\SetFigFont{12}{14.4}{\rmdefault}{\mddefault}{\updefault}{\color[rgb]{0,0,0}$\varepsilon(s)$}
}}}}
\put(3556,-3761){\makebox(0,0)[lb]{\smash{{\SetFigFont{12}{14.4}{\rmdefault}{\mddefault}{\updefault}{\color[rgb]{0,0,0}$s$}
}}}}
\put(4591,-3661){\makebox(0,0)[lb]{\smash{{\SetFigFont{12}{14.4}{\rmdefault}{\mddefault}{\updefault}{\color[rgb]{0,0,0}$(m(s)+1)*ds$}
}}}}
\put(5900,-2661){\makebox(0,0)[lb]{\smash{{\SetFigFont{24}{28.8}{\rmdefault}{\mddefault}{\updefault}{\color[rgb]{0,0,0}$\mathcal{S}_0$}
}}}}
\put(7591,-3661){\makebox(0,0)[lb]{\smash{{\SetFigFont{24}{28.8}{\rmdefault}{\mddefault}{\updefault}{\color[rgb]{0,0,0}$\Gamma_0$}
}}}}
\put(2296,-3661){\makebox(0,0)[lb]{\smash{{\SetFigFont{12}{14.4}{\rmdefault}{\mddefault}{\updefault}{\color[rgb]{0,0,0}$m(s)*ds$}
}}}}
\put(4591,-1276){\makebox(0,0)[lb]{\smash{{\SetFigFont{12}{14.4}{\rmdefault}{\mddefault}{\updefault}{\color[rgb]{0,0,0}$M_{24}^+$}
}}}}
\put(2296,-4651){\makebox(0,0)[lb]{\smash{{\SetFigFont{12}{14.4}{\rmdefault}{\mddefault}{\updefault}{\color[rgb]{0,0,0}$M_3^-$}
}}}}
\put(4591,-4651){\makebox(0,0)[lb]{\smash{{\SetFigFont{12}{14.4}{\rmdefault}{\mddefault}{\updefault}{\color[rgb]{0,0,0}$M_4^-$}
}}}}
\put(4591,-7171){\makebox(0,0)[lb]{\smash{{\SetFigFont{12}{14.4}{\rmdefault}{\mddefault}{\updefault}{\color[rgb]{0,0,0}$M_2^-$}
}}}}
\put(2296,-7171){\makebox(0,0)[lb]{\smash{{\SetFigFont{12}{14.4}{\rmdefault}{\mddefault}{\updefault}{\color[rgb]{0,0,0}$M_1^-$}
}}}}
\put(3691,-5831){\makebox(0,0)[lb]{\smash{{\SetFigFont{12}{14.4}{\rmdefault}{\mddefault}{\updefault}{\color[rgb]{0,0,0}$-\varepsilon(s)$}
}}}}
\put(2296,-1951){\makebox(0,0)[lb]{\smash{{\SetFigFont{12}{14.4}{\rmdefault}{\mddefault}{\updefault}{\color[rgb]{0,0,0}$M_{13}^+$}
}}}}
\put(2296,-5821){\makebox(0,0)[lb]{\smash{{\SetFigFont{12}{14.4}{\rmdefault}{\mddefault}{\updefault}{\color[rgb]{0,0,0}$M_{13}^-$}
}}}}
\put(4591,-5956){\makebox(0,0)[lb]{\smash{{\SetFigFont{12}{14.4}{\rmdefault}{\mddefault}{\updefault}{\color[rgb]{0,0,0}$M_{24}^-$}
}}}}
\end{picture}
}
\end{center}
\vspace*{-0.5cm}
\begin{figure}[H]
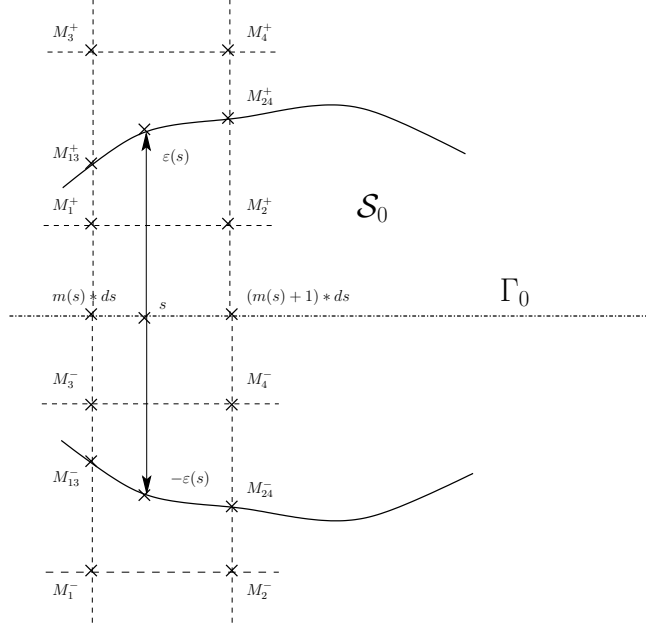

\caption{Definition of fictitious points, in order to compute an approximation of an integral on $\mathcal{S}_0$.\label{fig-ibm}}
\end{figure}
\end{minipage}
\FloatBarrier
\hfill \\
Then the following type of integrals is approximated with a trapeze formula combined with the approximations above, as follows:
\begin{eqnarray*}
\int_{\mathcal{S}_0} \varphi(\x) \d \x & \approx &
\sum_{k} ds*\left(
\frac{(m(s_k)+1)*ds - s_k}{ds}\varphi(M^{\pm}_{13}(s_k))
+ \frac{s_k -m(s_k)*ds}{ds}\varphi(M^{\pm}_{24}(s_k))\right).
\end{eqnarray*}
We refer to~\cite{Farhat2012} for more general considerations.

\subsubsection{Algorithm} \label{sec-algo2D}
The schemes chosen for solving the direct and adjoint states are of the same type as those chosen for the 1D discretization, except for the approximation of the terminal condition for the adjoint state, as mentioned in section~\ref{sec-term2D}.

\begin{algorithm}[htpb]
	\begin{description}
		\item[Initialization:] $L = 40$, $\Omega = [0,L]^2$, $T = 10$, $u_0 = (H_0 = 1.5, 0.0,0.0)$, $\omega = [0,L]\times [0,0.05*L]$, $\alpha = 0.0005$, $\xi = 0$, $\Gamma_0 = [0.3125*L, 0.6875*L]\times \{0.5*L \}$ ($\ell = 15$).
		\begin{description}
			\item[Gradient:] Compute $(L_{\tilde{\xi}},L_{\eta})$ as follows:\\
			$\bullet$ Compute $u$ and $q$, solutions of system~\eqref{eqsuper2D} and system~\eqref{sysadj2D}, respectively, with $\kappa=0$.\\
			$\bullet$ Use formulas of Proposition~\ref{thopt}, Corollary~\ref{corodim12} and refer to section~\ref{rk-trick2} (with $\kappa = 0$).
			\item[Armijo rule:] Do a line search, \\
			and get a second pair $(\tilde{\xi},\eta)$, for initializing the Barzilai-Borwein algorithm.
			\item[Barzilai-Borwein steps:] While $||| (L_{\tilde{\xi}},L_{\eta}) ||| > 1.e^{-10}$, do gradient steps.\\ Compute the gradient as above.
		\end{description}
	\end{description}
	\caption{Solving the first-order optimality conditions,  with the expressions~\eqref{gradientxi}-\eqref{exp-shitty0} and~\eqref{Jeta2}.}\label{algo2D}
\end{algorithm}
\FloatBarrier


\subsubsection{Results} \label{sec-res2D}
For the following specifications, the results are graphically given in Figure~\ref{fig2D} and Figure~\ref{figoptshape}. For the time discretization, we took 2000 steps, and 101x101 degrees of freedom for the space discretization. For the curve $\Gamma$, we chose 200 degrees of freedom. The control is distributed on the domain $[0,40.0]\times [0,2.0]$. The results are graphically represented in Figure~\ref{fig2D} and Figure~\ref{figoptshape}.

\begin{minipage}{\linewidth}
	\vspace{-15pt}
	\hspace{-0.025\linewidth}
	\begin{minipage}{0.30\linewidth}
		\begin{figure}[H]
			\includegraphics[trim = 4cm 1.7cm 4cm 1.7cm, clip, scale=0.12]{./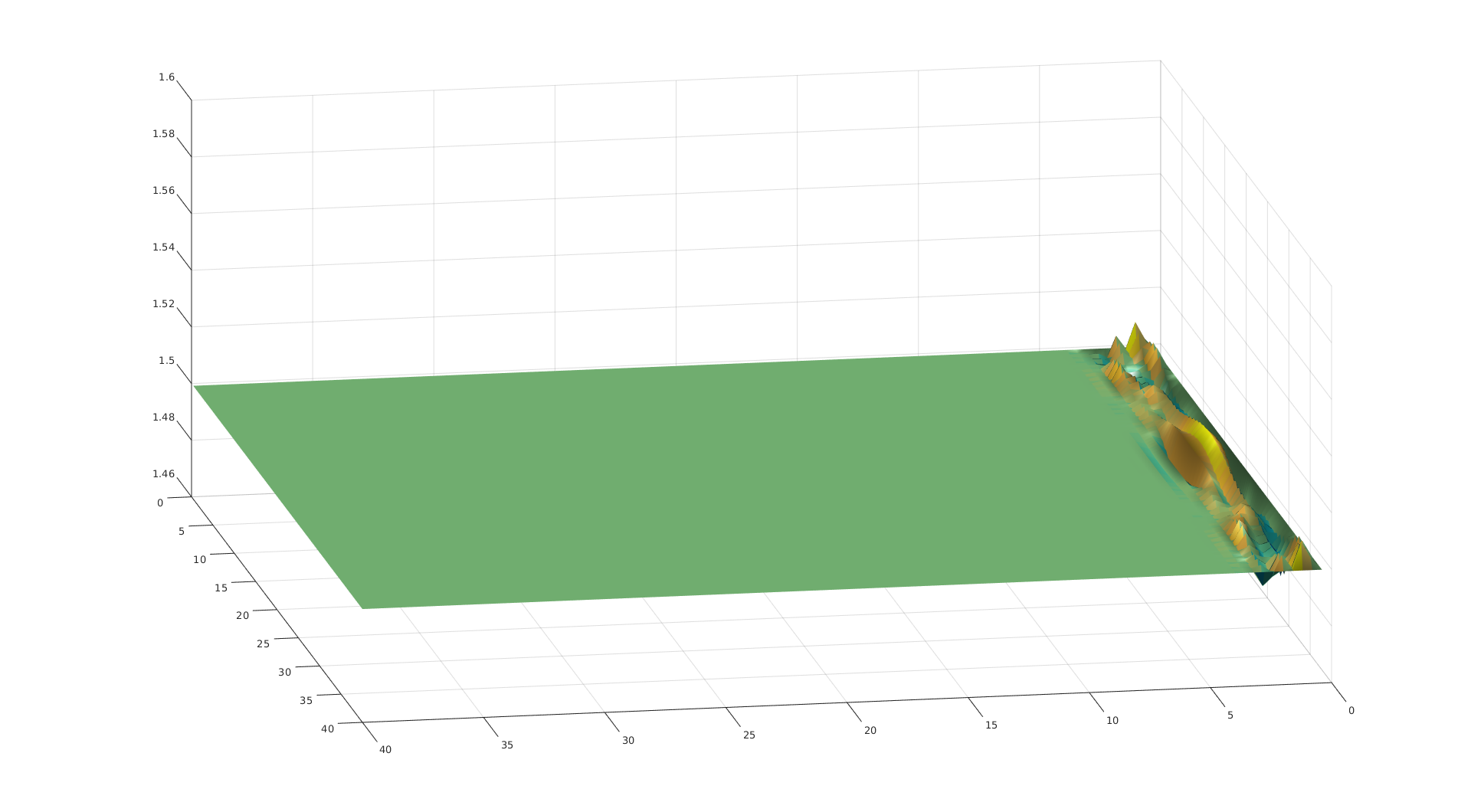}
			\begin{center}\begin{small} $ t = 0.25 $ \end{small}\end{center}
		\end{figure}
	\end{minipage}
	\hspace{0.01\linewidth}
	\begin{minipage}{0.30\linewidth}
		\begin{figure}[H]
			\includegraphics[trim = 4cm 1.7cm 4cm 1.7cm, clip, scale=0.12]{./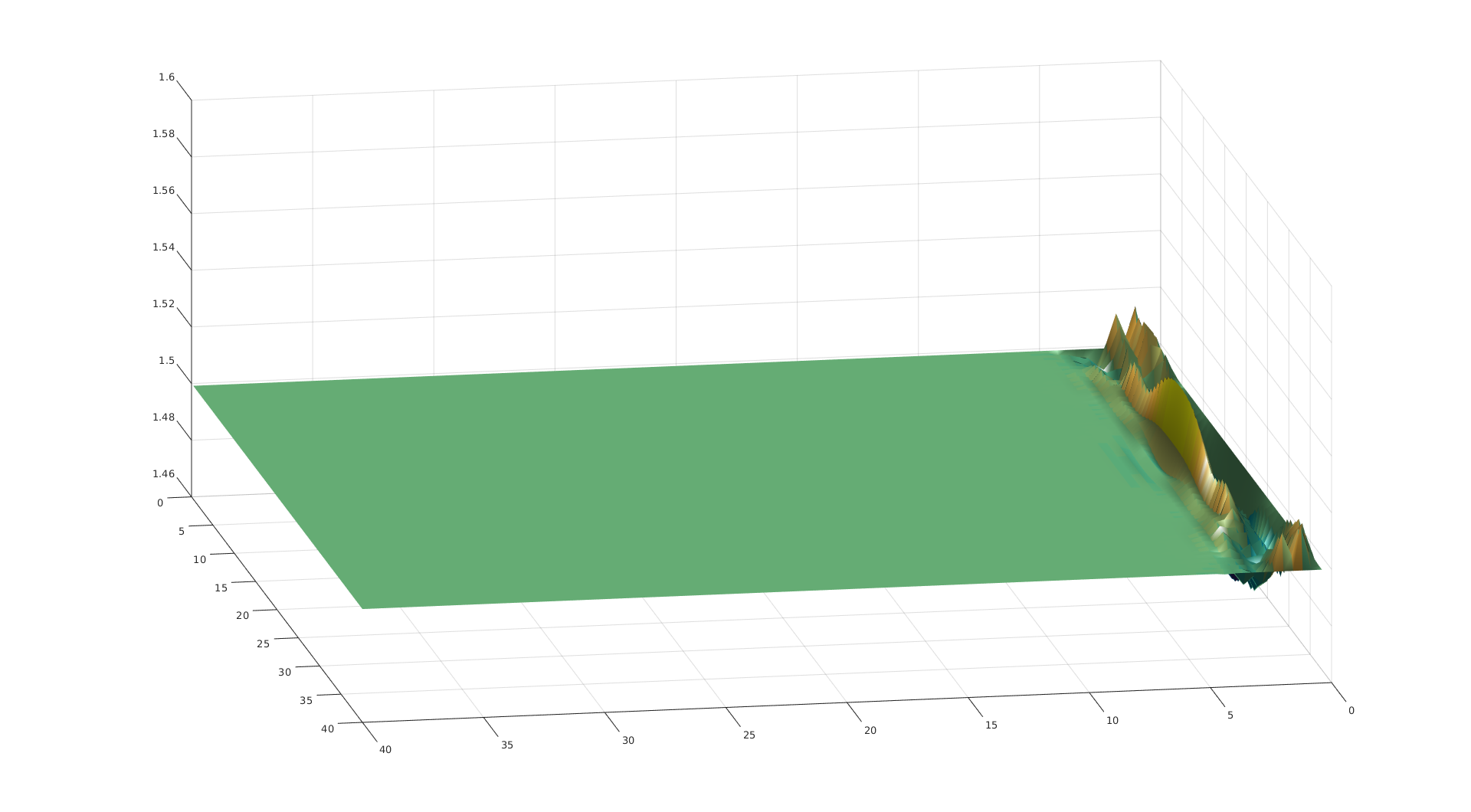}
			\begin{center}\begin{small} $ t = 0.50 $ \end{small}\end{center}
		\end{figure}
	\end{minipage}
	\hspace{0.01\linewidth}
	\begin{minipage}{0.30\linewidth}
		\begin{figure}[H]
			\includegraphics[trim = 4cm 1.7cm 4cm 1.7cm, clip, scale=0.12]{./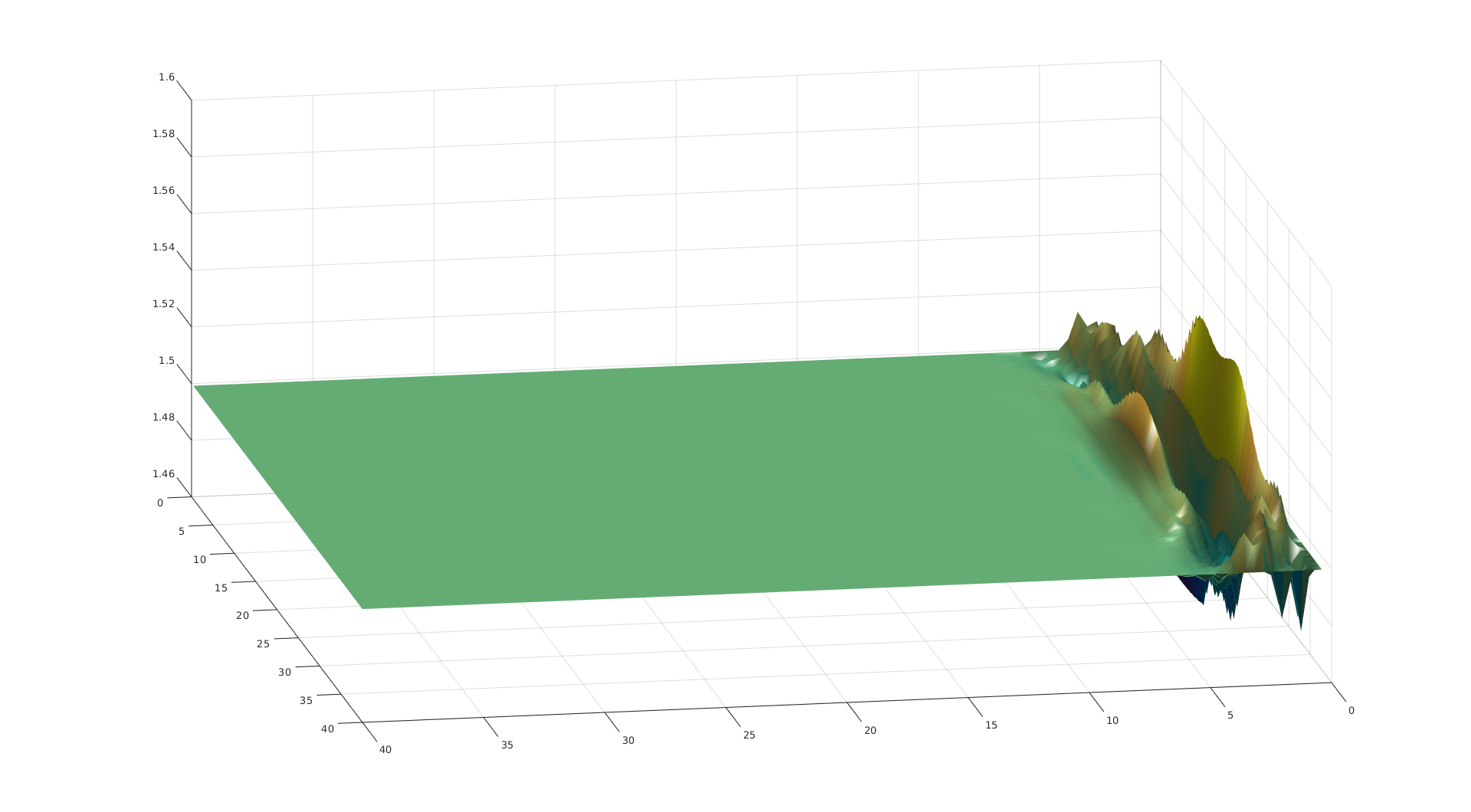}
			\begin{center}\begin{small} $ t = 1.00 $ \end{small}\end{center}
		\end{figure}
	\end{minipage}
	\\ \vspace{-15pt}
	\hspace{-0.025\linewidth}
	\begin{minipage}{0.30\linewidth}
		\begin{figure}[H]
			\includegraphics[trim = 4cm 1.7cm 4cm 1.7cm, clip, scale=0.12]{./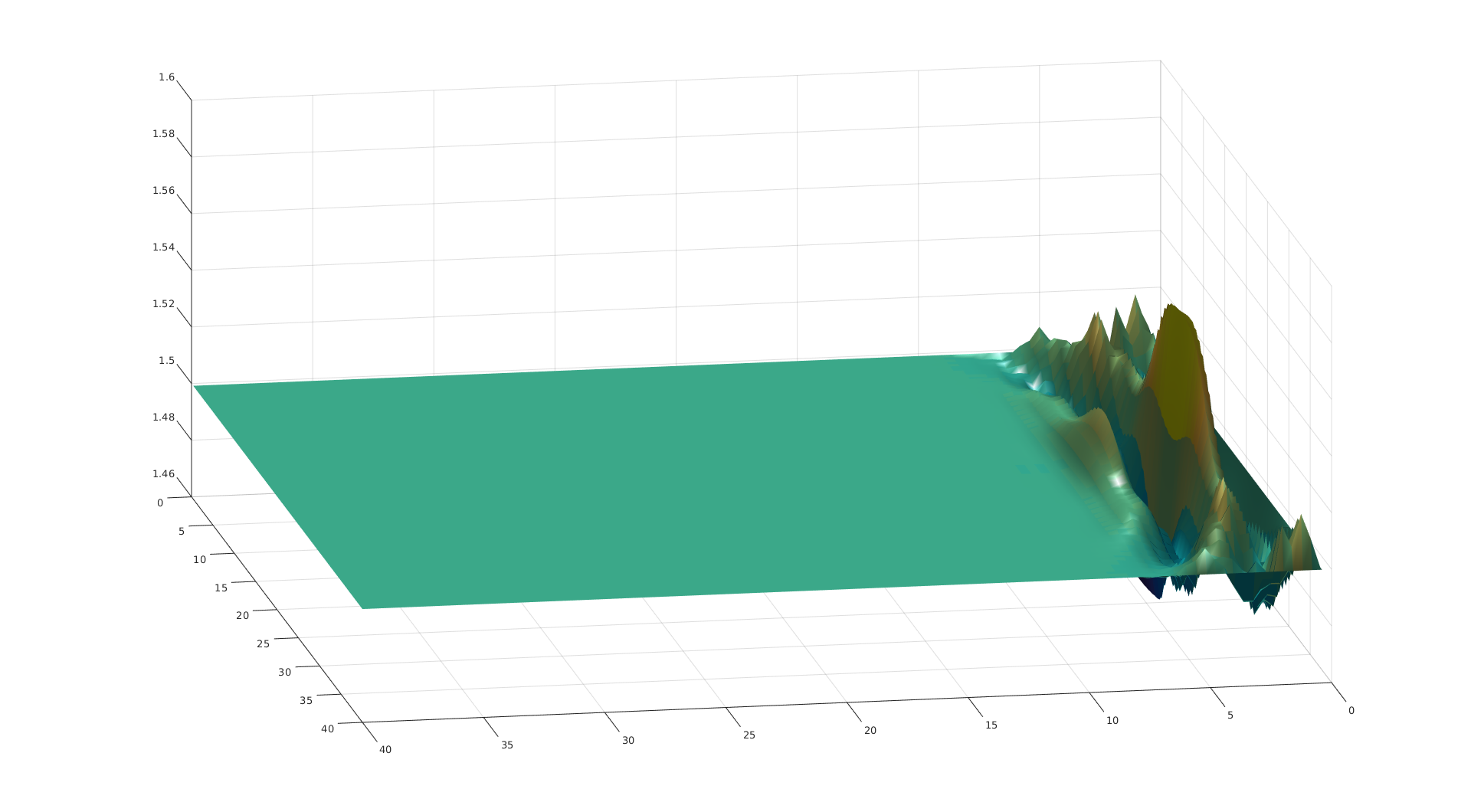}
			\begin{center}\begin{small} $ t = 1.50 $ \end{small}\end{center}
		\end{figure}
	\end{minipage}	
	\hspace{0.01\linewidth}
	\begin{minipage}{0.30\linewidth}
		\begin{figure}[H]
			\includegraphics[trim = 4cm 1.7cm 4cm 1.7cm, clip, scale=0.12]{./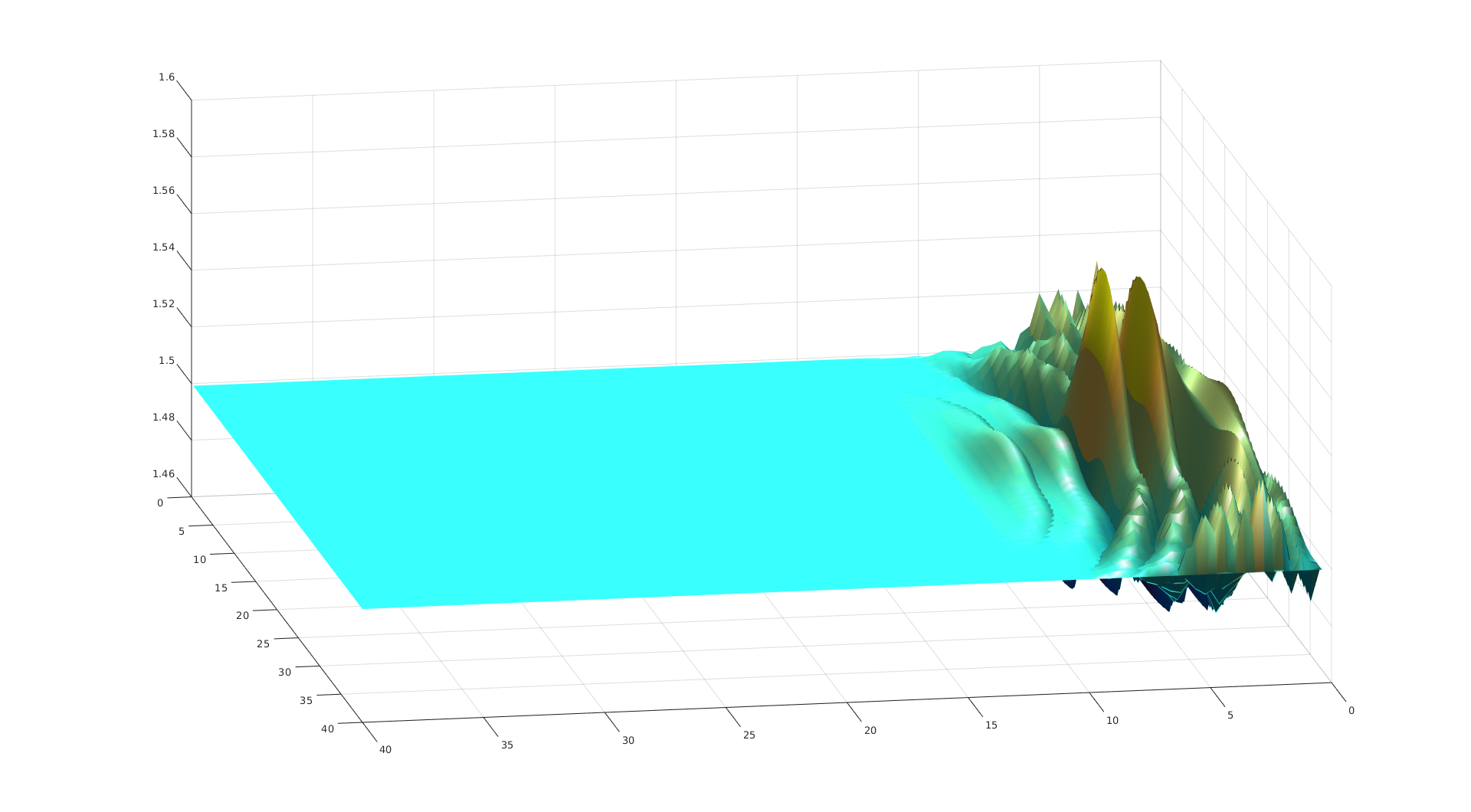}
			\begin{center}\begin{small} $ t = 2.00 $ \end{small}\end{center}
		\end{figure}
	\end{minipage}
	\hspace{0.01\linewidth}
	\begin{minipage}{0.30\linewidth}
		\begin{figure}[H]
			\includegraphics[trim = 4cm 1.7cm 4cm 1.7cm, clip, scale=0.12]{./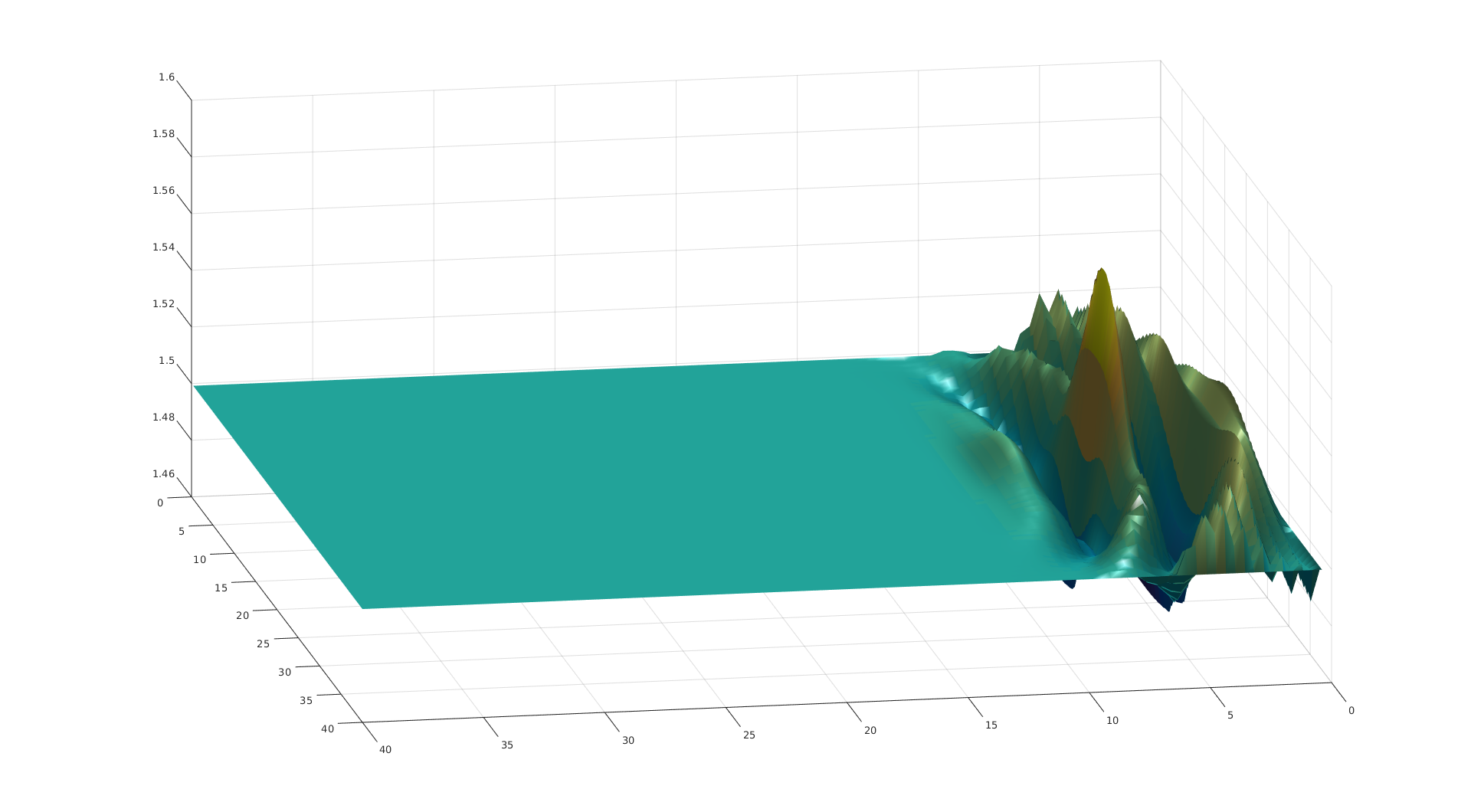}
			\begin{center}\begin{small} $ t = 2.50 $ \end{small}\end{center}
		\end{figure}
	\end{minipage}
	\\ \vspace{-15pt}
	\hspace{-0.025\linewidth}
	\begin{minipage}{0.30\linewidth}
		\begin{figure}[H]
			\includegraphics[trim = 4cm 1.7cm 4cm 1.7cm, clip, scale=0.12]{./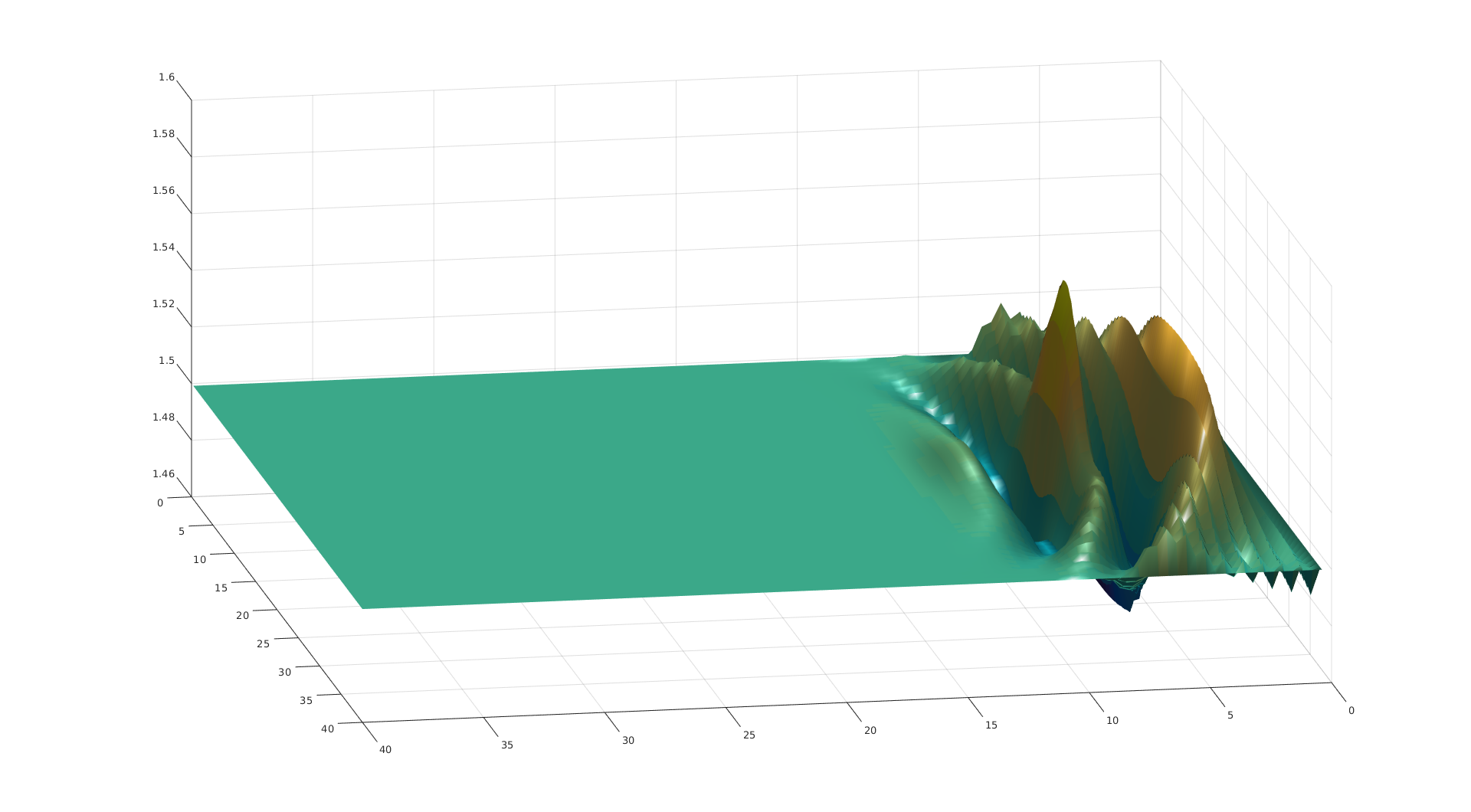}
			\begin{center}\begin{small} $ t = 3.00 $ \end{small}\end{center}
		\end{figure}
	\end{minipage}	
	\hspace{0.01\linewidth}
	\begin{minipage}{0.30\linewidth}
		\begin{figure}[H]
			\includegraphics[trim = 4cm 1.7cm 4cm 1.7cm, clip, scale=0.12]{./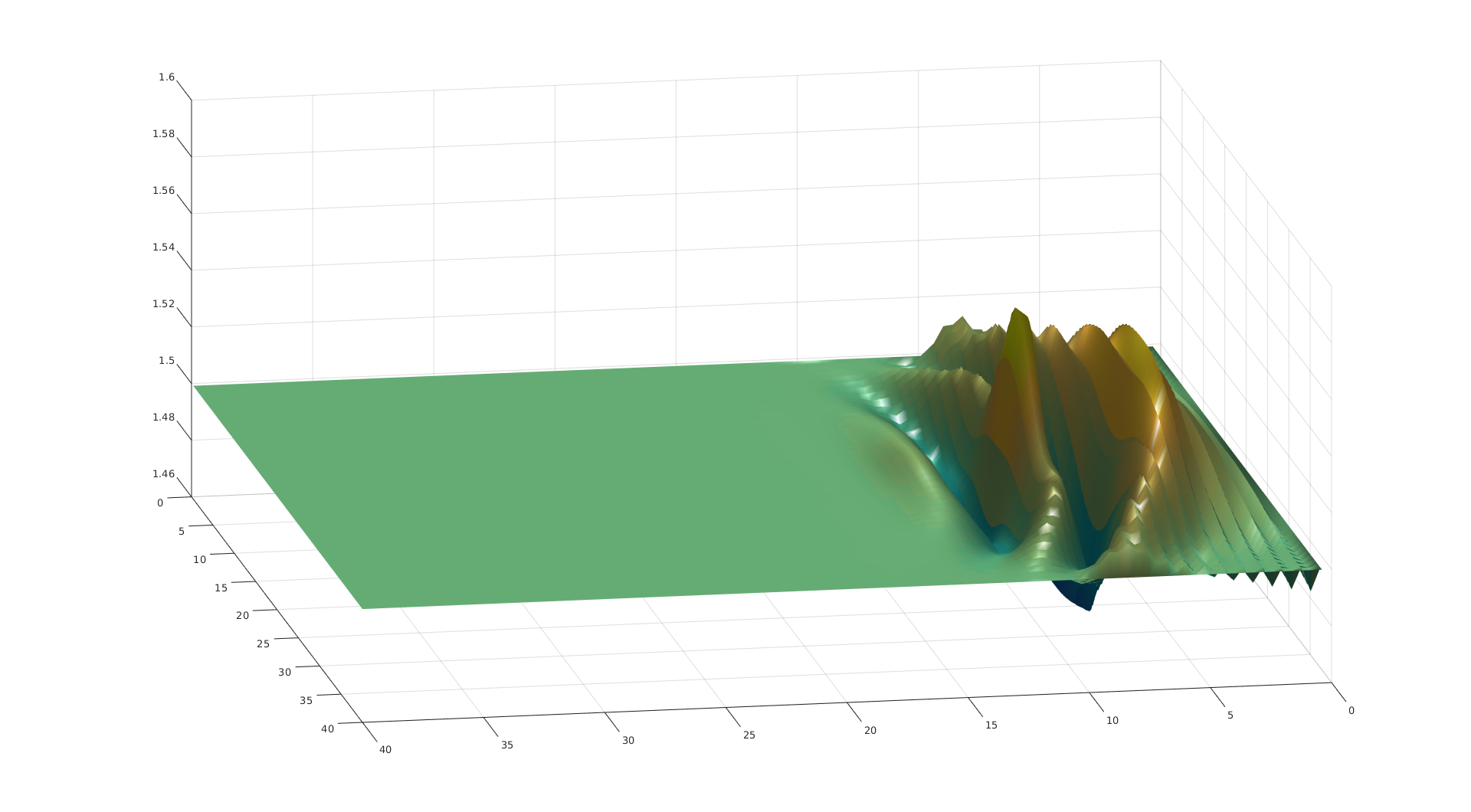}
			\begin{center}\begin{small} $ t = 3.50 $ \end{small}\end{center}
		\end{figure}
	\end{minipage}
	\hspace{0.01\linewidth}
	\begin{minipage}{0.30\linewidth}
		\begin{figure}[H]
			\includegraphics[trim = 4cm 1.7cm 4cm 1.7cm, clip, scale=0.12]{./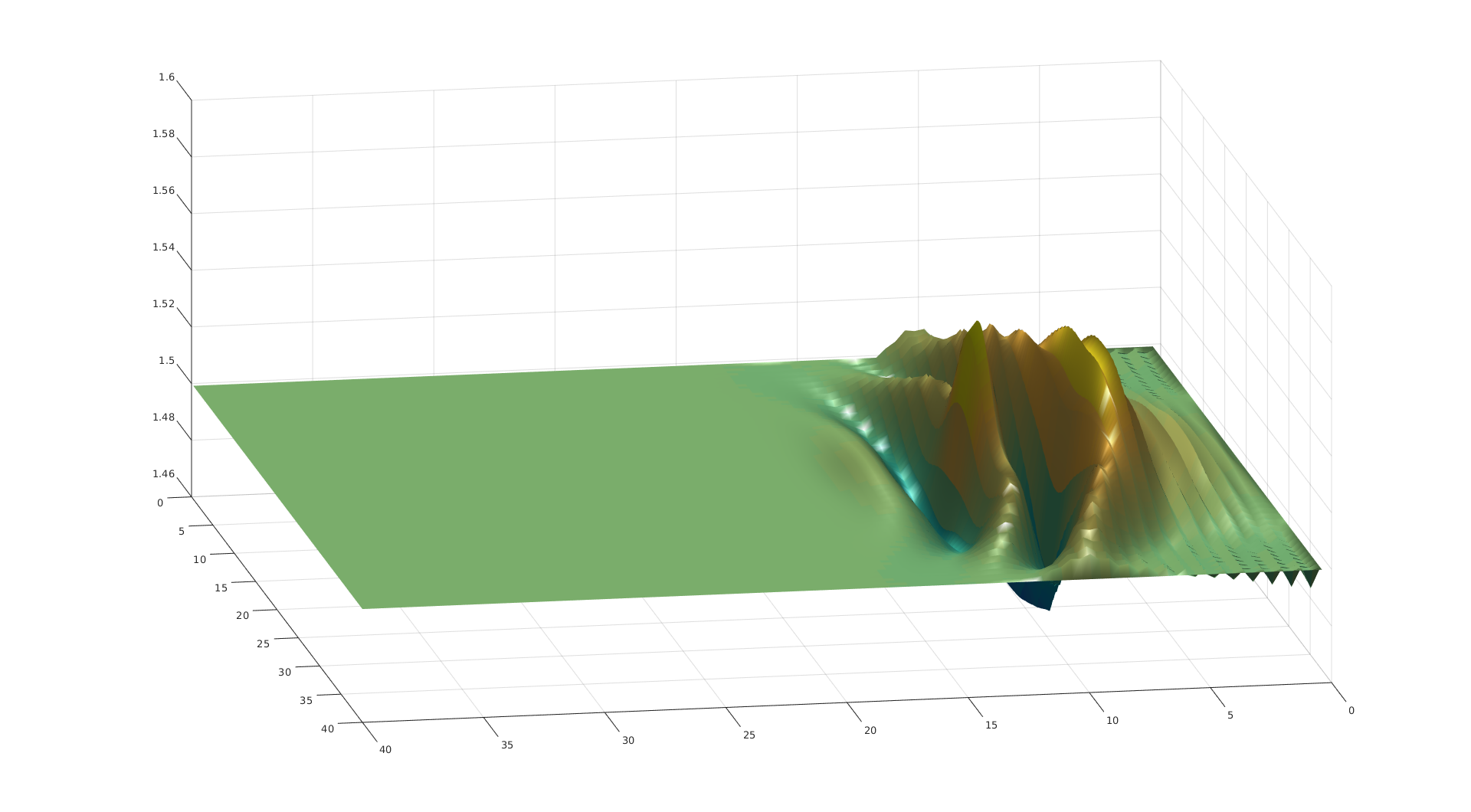}
			\begin{center}\begin{small} $ t = 4.00 $ \end{small}\end{center}
		\end{figure}
	\end{minipage}
	\\ \vspace{-15pt}
	\hspace{-0.025\linewidth}
	\begin{minipage}{0.30\linewidth}
		\begin{figure}[H]
			\includegraphics[trim = 4cm 1.7cm 4cm 1.7cm, clip, scale=0.12]{./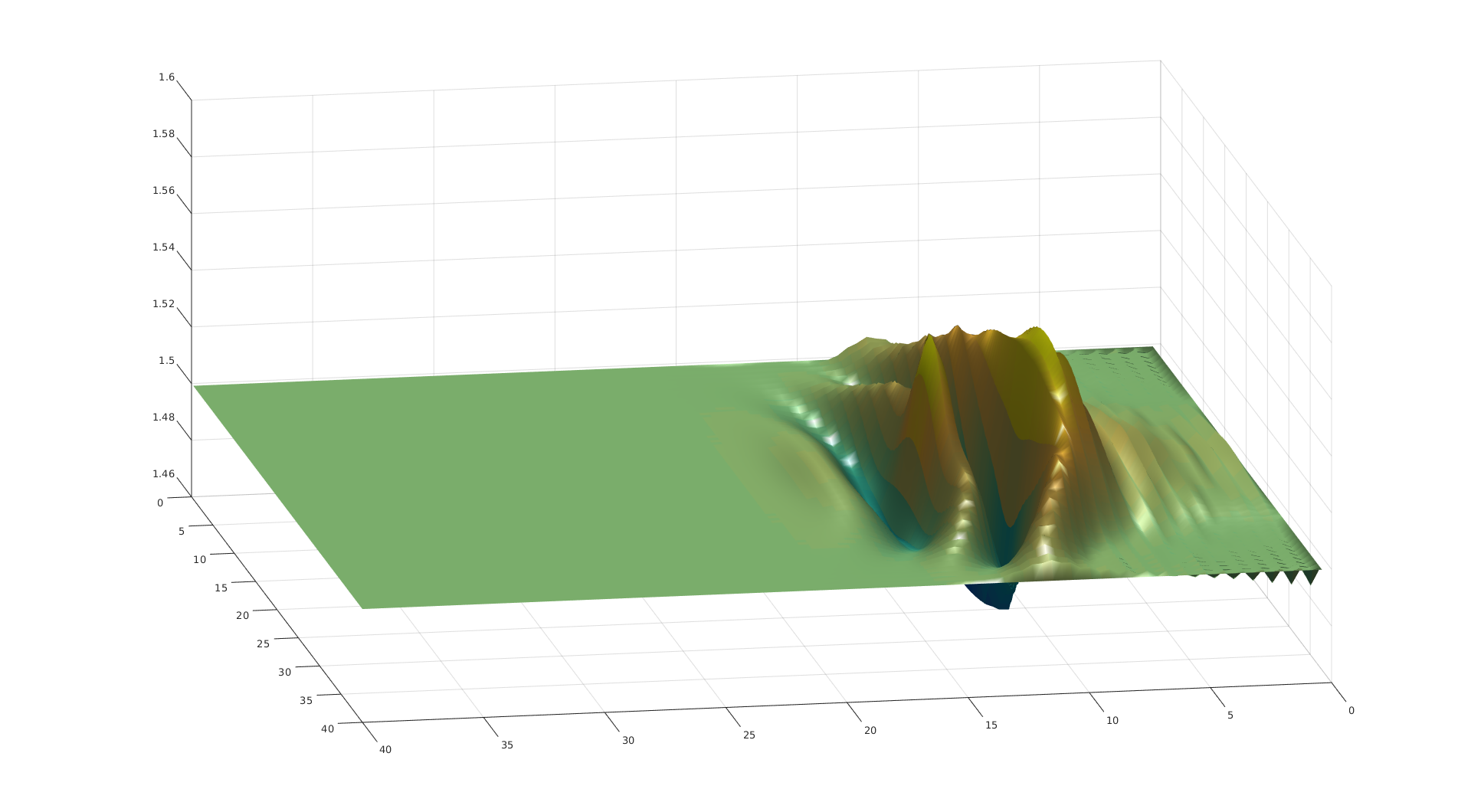}
			\begin{center}\begin{small} $ t = 4.50 $ \end{small}\end{center}
		\end{figure}
	\end{minipage}
	\hspace{0.01\linewidth}
	\begin{minipage}{0.30\linewidth}
		\begin{figure}[H]
			\includegraphics[trim = 4cm 1.7cm 4cm 1.7cm, clip, scale=0.12]{./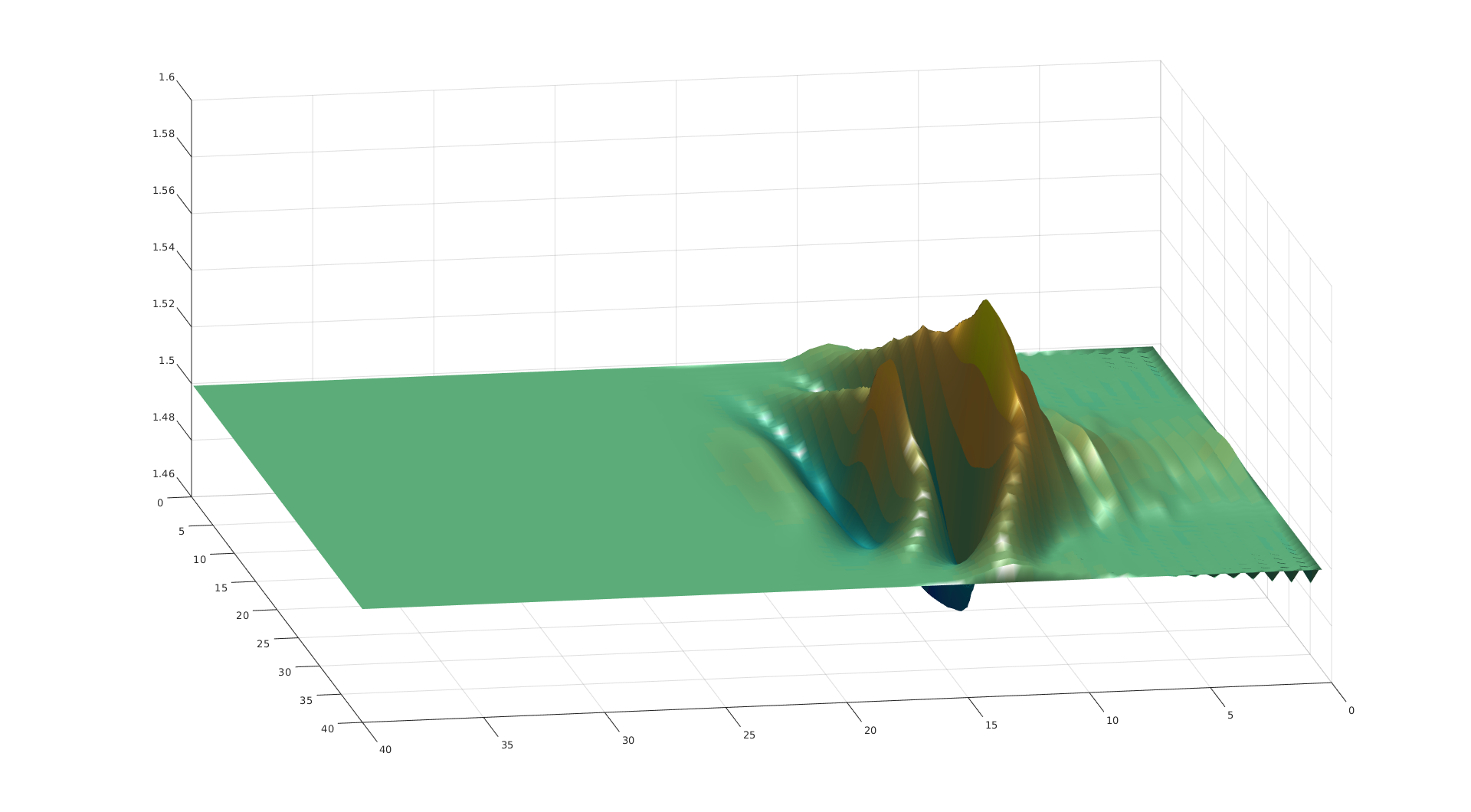}
			\begin{center}\begin{small} $ t = 5.00 $ \end{small}\end{center}
		\end{figure}
	\end{minipage}	
	\hspace{0.01\linewidth}
	\begin{minipage}{0.30\linewidth}
		\begin{figure}[H]
			\includegraphics[trim = 4cm 1.7cm 4cm 1.7cm, clip, scale=0.12]{./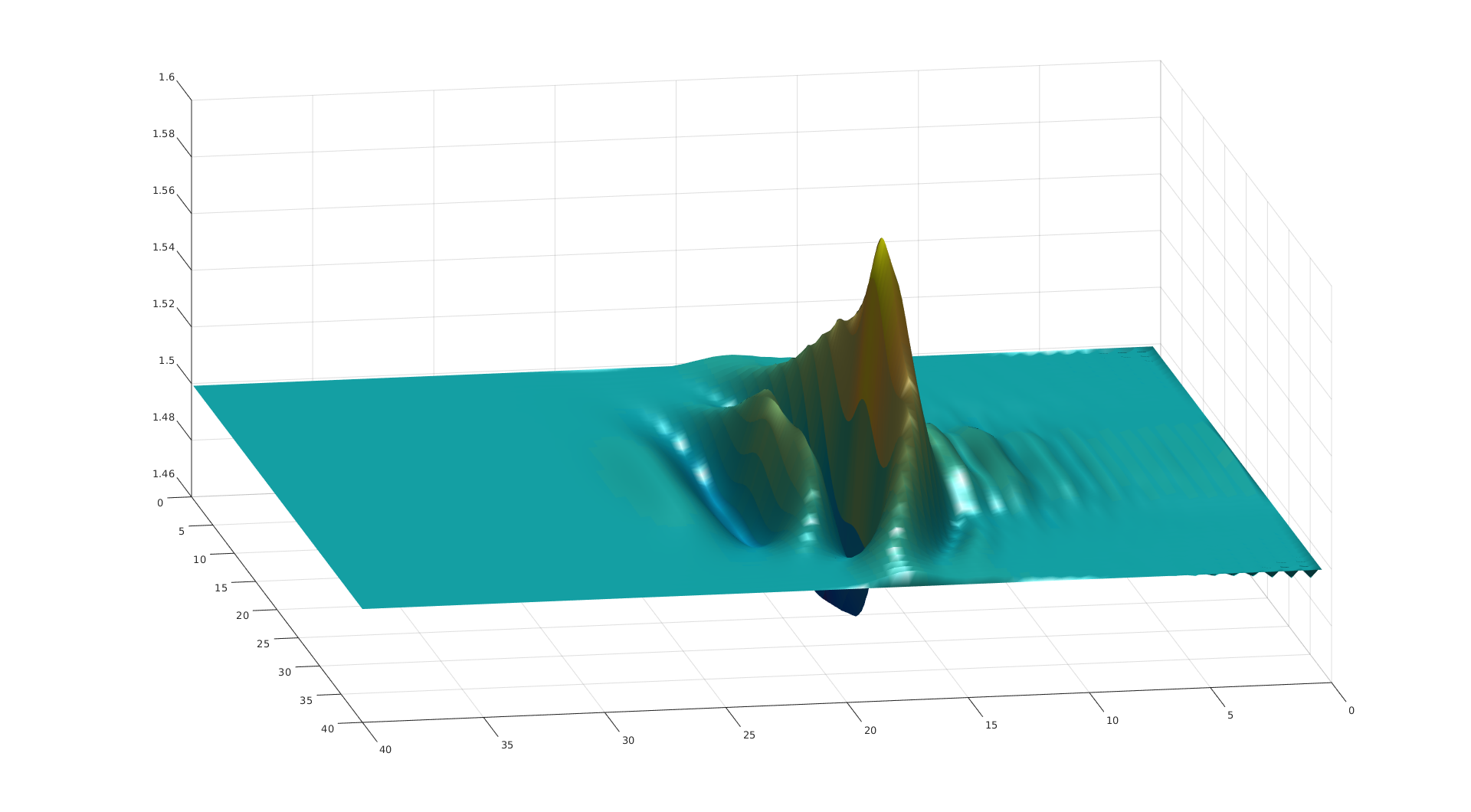}
			\begin{center}\begin{small} $ t = 6.25 $ \end{small}\end{center}
		\end{figure}
	\end{minipage}
	\\ \vspace{-20pt}
	\hspace{-0.025\linewidth}
	\begin{minipage}{0.30\linewidth}
		\begin{figure}[H]
			\includegraphics[trim = 4cm 1.7cm 4cm 1.7cm, clip, scale=0.12]{./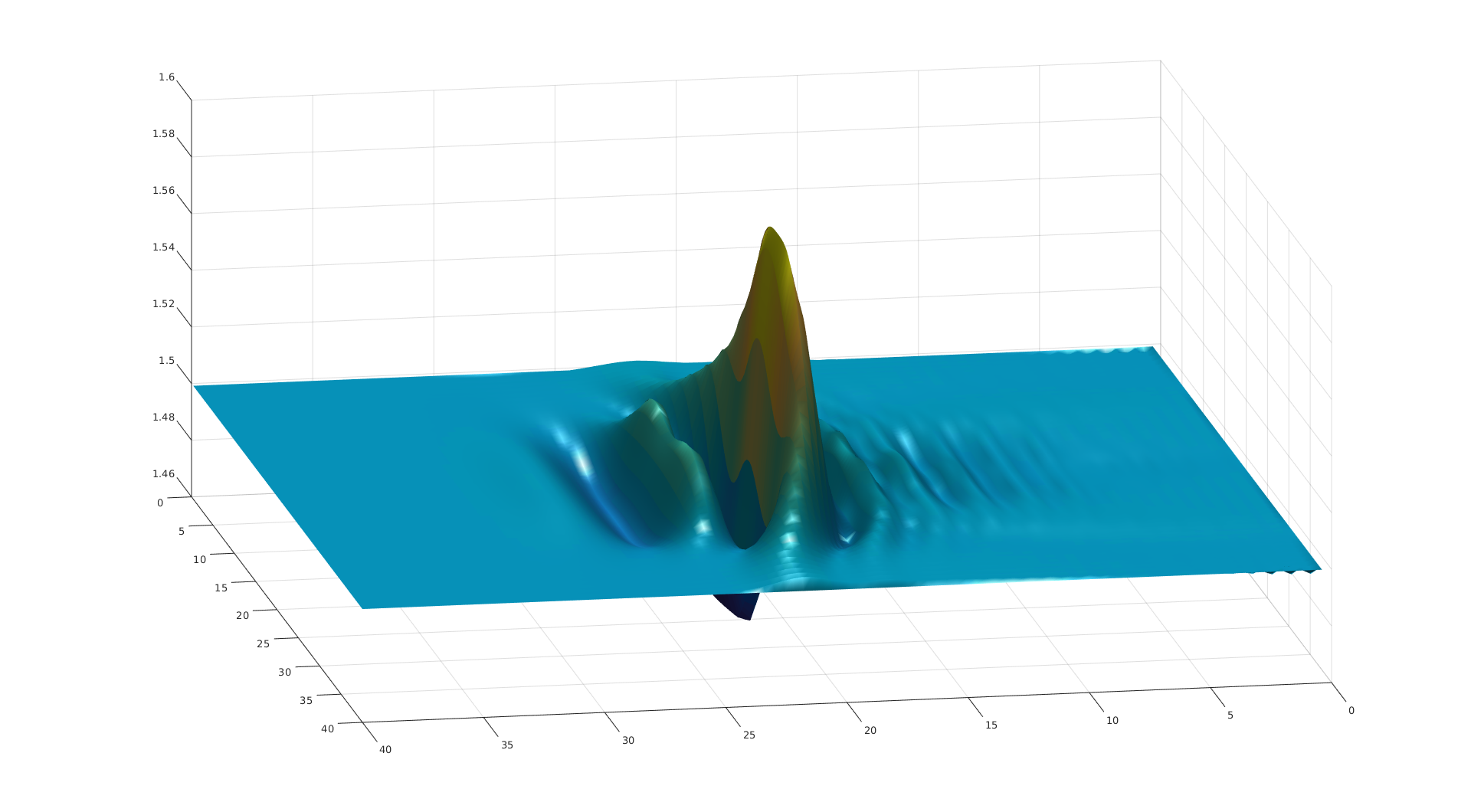}
			\begin{center}\begin{small} $ t = 7.50 $ \end{small}\end{center}
		\end{figure}
	\end{minipage}
	\hspace{0.01\linewidth}
	\begin{minipage}{0.30\linewidth}
		\begin{figure}[H]
			\includegraphics[trim = 4cm 1.7cm 4cm 1.7cm, clip, scale=0.12]{./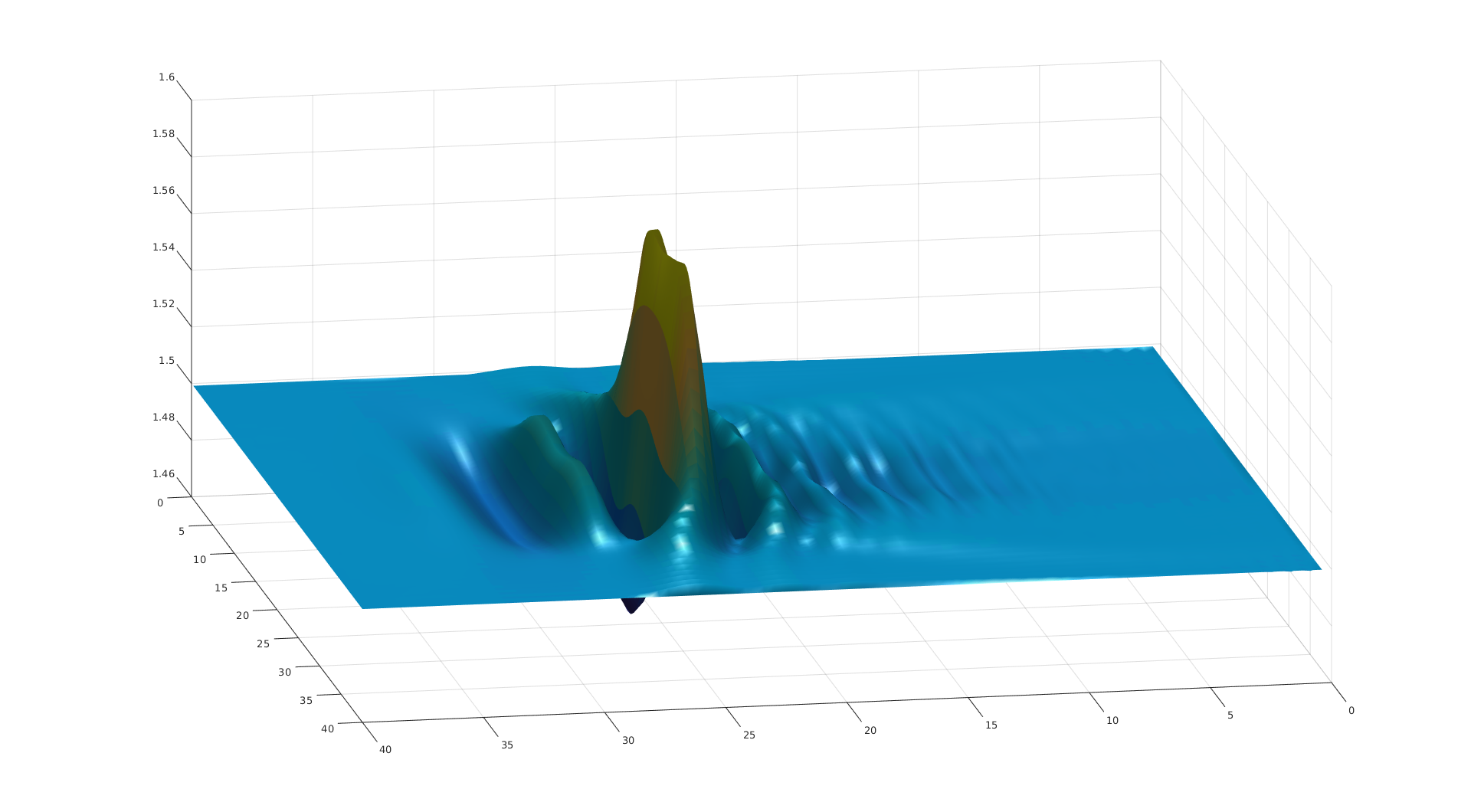}
			\begin{center}\begin{small} $ t = 8.75 $ \end{small}\end{center}
		\end{figure}
	\end{minipage}
	\hspace{0.01\linewidth}
	\begin{minipage}{0.30\linewidth}
		\begin{figure}[H]
			\includegraphics[trim = 4cm 1.7cm 4cm 1.7cm, clip, scale=0.12]{./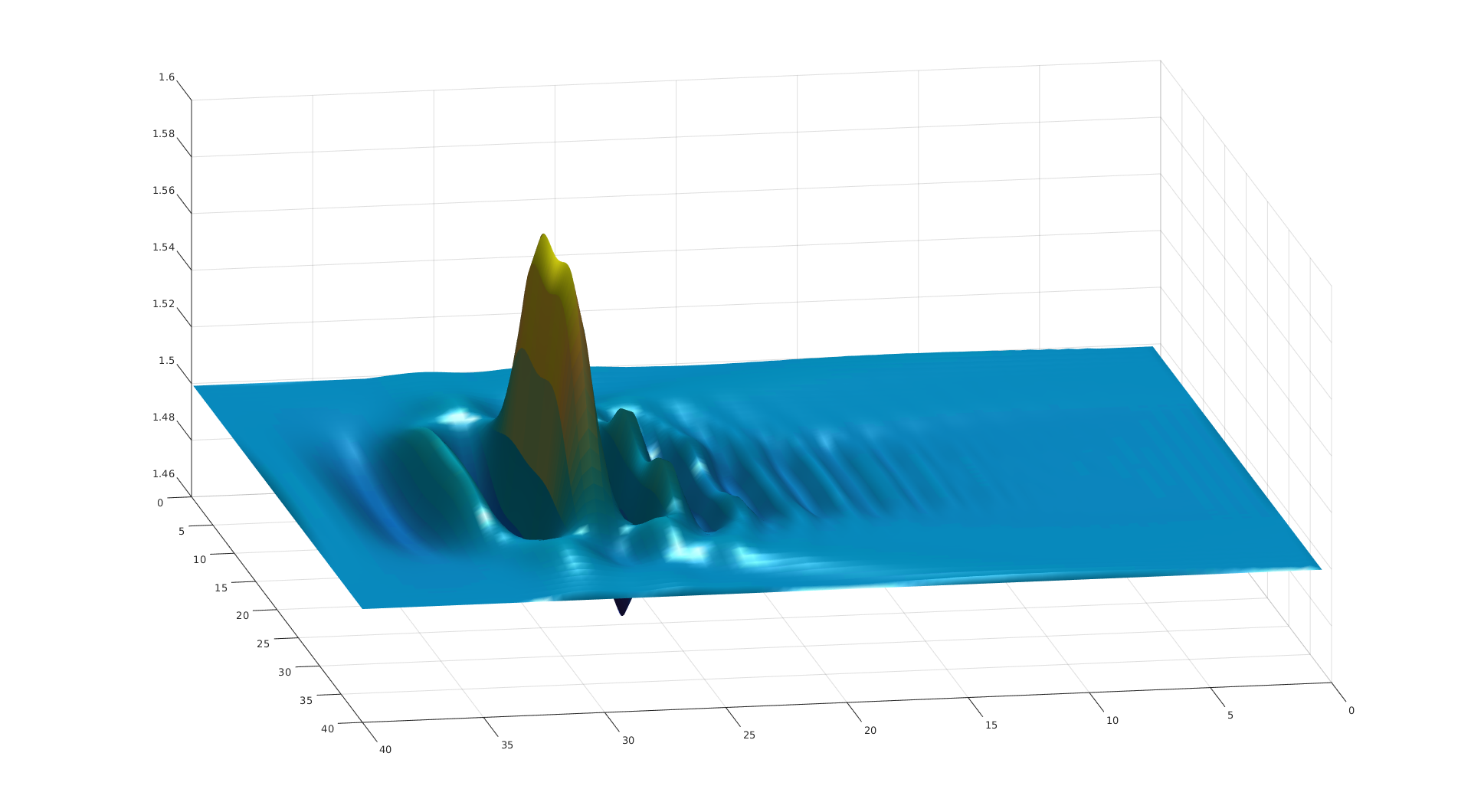}
			\begin{center}\begin{small} $ t = 10.0 $ \end{small}\end{center}
		\end{figure}
	\end{minipage}	
\end{minipage}
\begin{minipage}{\linewidth}
	\begin{figure}[H]
		\caption{Evolution through the time of the height $H$ on $(0,L)$ for the controlled 2D Shallow-Water Equations. \textcolor{white}{}\label{fig2D}}
	\end{figure}
\end{minipage}\\
\FloatBarrier
Like in the 1D case, we observe the creation of waves, behind and in front of the main one, possibly in order to avoid the dispersion of the mass of the main wave. The maximum seems to be reached for a curve which is close to a straight line (see Figure~\ref{figoptshape}), located approximately at the position $[12.5,27.5] \times \{ 0.7*L\}$, at time $T$. In Figure~\ref{figoptshape} we take a closer look at the support of the optimal wave, for different lengths: If we pay attention to the scale, we observe that the curvature of this optimal curve becomes more pronounced when its length $\ell$ is increased.

\begin{minipage}{\linewidth}
	\hspace*{-20pt}
	\begin{minipage}{0.45\linewidth}
		\begin{figure}[H]
			\includegraphics[trim = 2.0cm 0cm 2cm 0cm, clip, scale=0.18]{./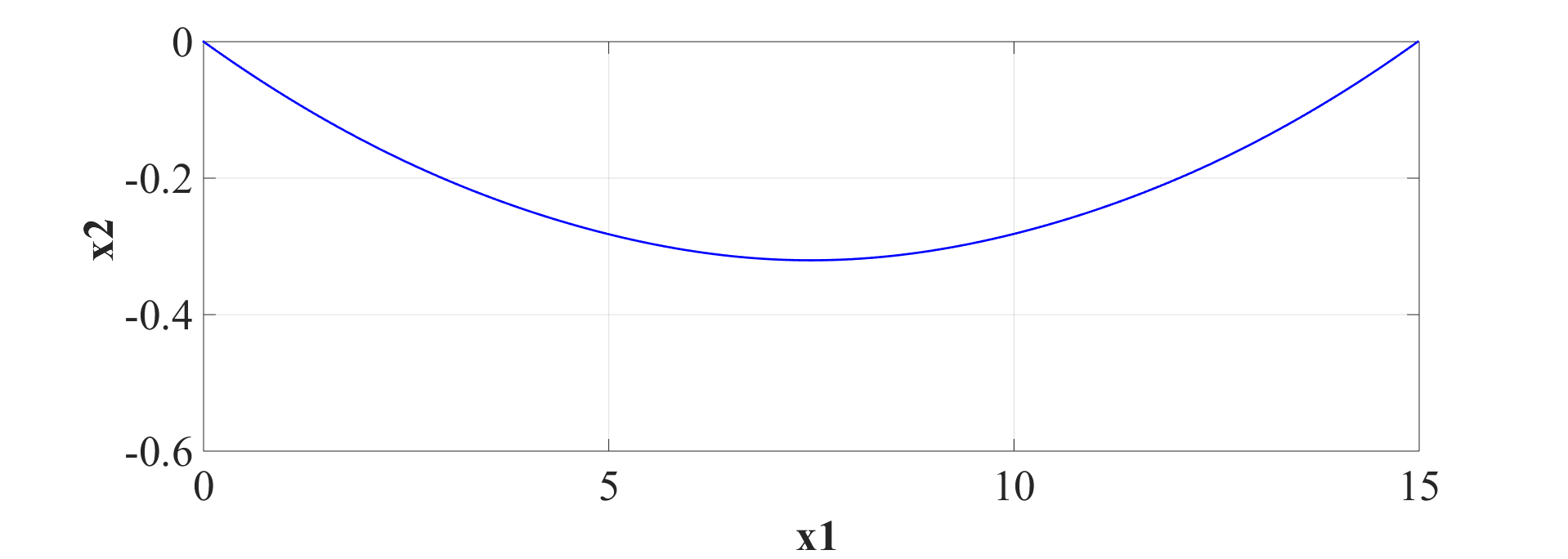}
		\end{figure}
	\end{minipage}
	\hspace{0.05\linewidth}
	\begin{minipage}{0.45\linewidth}
		\begin{figure}[H]
			\includegraphics[trim = 3.0cm 0cm 1cm 0cm, clip, scale=0.18]{./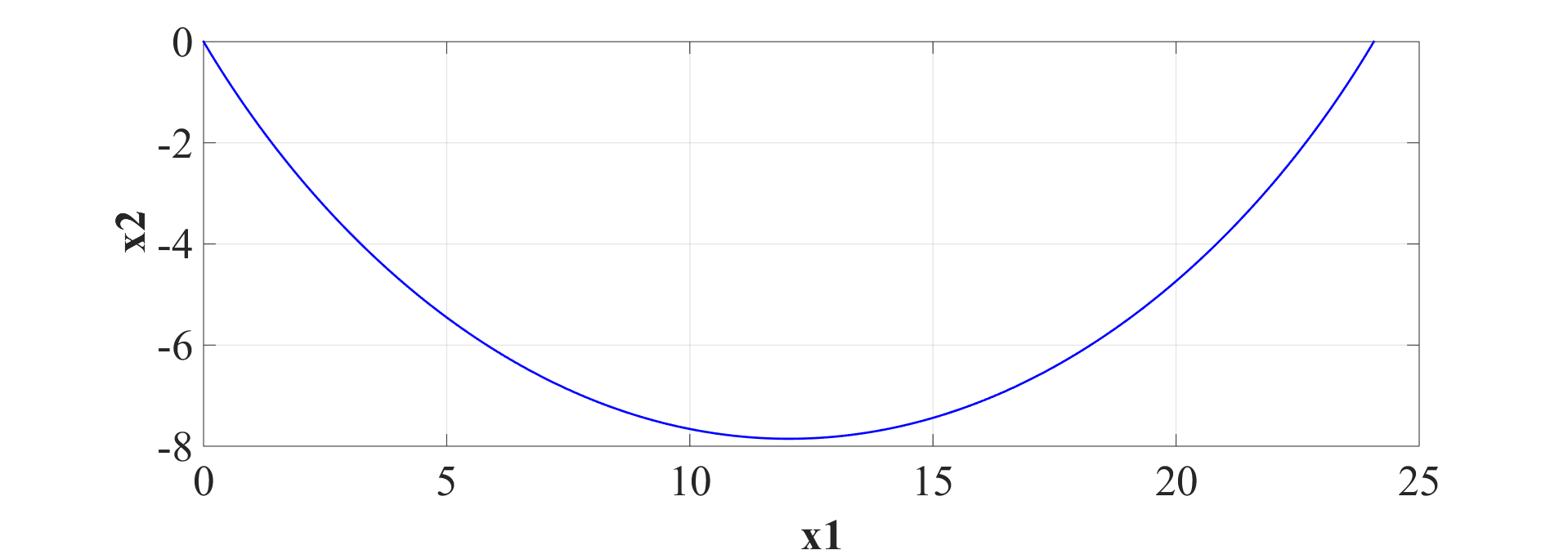}
		\end{figure}
	\end{minipage}
\end{minipage}
\begin{minipage}{\linewidth}
	\begin{figure}[H]
		\caption{Optimal shape for the curve $\Gamma$, for $\ell = 15$ (left) and $\ell = 30$ (right). They correspond to the coefficients $a_1 = 0.011$, $a_2 = -0.00012$, $a_3 = 0.00095$, $a_4 = - 0.00053$ (left) and $a_1 = 0.068$, $a_2 = -0.022$, $a_3 = 0.0029$, $a_4 = -0.00036$ (right), respectively. See formula~\eqref{formulacurv}.\label{figoptshape}}
	\end{figure}
\end{minipage}\\
\FloatBarrier

\section{Conclusion} \label{sec-conc}
In this article we have developed theoretical and numerical methods for deriving and solving first-order optimality conditions for a special class of optimal control problems, namely problems involving nonlinear systems of conservation laws and a geometric parameter to be optimized in the terminal cost. The theoretical findings enabled us to develop numerical techniques in order to solve these optimality conditions. The update of the geometric parameter through iterations of a Barzilai-Borwein algorithm is performed with an immersed boundary approach. The numerical experiments provided for the shallow-water system in 1D as in 2D reveal the complexity of the problem as well as the relevance of the expressions given for the optimality conditions. A further development could be the consideration in the cost function of the full trajectory of a set through the time. The consideration of shocks in such a control problem is also challenging.

\appendix

\section{Examples of systems of conservation laws} \label{appendix}

\subsection{The $\L^p$-maximal regularity for systems of conservation law with viscosity}
\label{appendix-Lp}

In this section we provide proofs of existence of solutions for the different systems considered in the paper, in particular the proofs of Proposition~\ref{prop01} and Proposition~\ref{prop02}.

Given $\kappa >0$, we consider the following system:
\begin{eqnarray}
\left\{ \begin{array} {rcl}
\dot{u} - \kappa \Delta u + \divg (F(u))  =  B\xi & & \text{in } \Omega \times (0,T), \\
u = 0 & & \text{on } \p \Omega \times (0,T), \\
u(\cdot,0)  = u_0 & & \text{in } \Omega.
\end{array} \right.
\label{sysLp}
\end{eqnarray}
A solution of system~\eqref{sysLp} will be considered in the space
\begin{eqnarray*}
	\mathscr{U}  = \L^p(0,T;\mathbf{W}^{2,p}(\Omega) \cap \mathbf{W}_0^{1,p}(\Omega))
	\cap \W^{1,p}(0,T;\mathbf{L}^p(\Omega)),
\end{eqnarray*}
namely the so called {\it $\L^p$-maximal regularity} functional framework. Concerning this notion, we refer to~\cite{AMS2003} and~\cite{Arendt2007}.
Denoting $p' = p/(p-1)$, we recall that the function space above is continuously embedded into $\mathcal{C}([0,T];\mathbf{W}^{2/{p'},p}(\Omega) \cap \mathbf{W}_0^{1/{p'},p}(\Omega))$ (see~\cite{Adams}\footnote{The trace space is actually the Besov space $\mathbf{B}^{2/{p'}}_{p,p}(\Omega) \cap \mathring{\mathbf{B}}_{p,p}^{1/{p'}}(\Omega))$, which coincides with $\mathbf{W}^{2/{p'},p}(\Omega) \cap \mathbf{W}_0^{1/{p'},p}(\Omega)$, see~\cite{Triebel}.}). From now we will assume that $u_0$ lies in the trace space
\begin{eqnarray*}
	\mathscr{U}_0 & = & \mathbf{W}^{2/{p'},p}(\Omega) \cap \mathbf{W}_0^{1/{p'},p}(\Omega).
\end{eqnarray*}
Throughout this section we assume that $p\in (d, \infty)$, and $p \in [2, \infty)$ for $d=1$, and that $F$ is of class $\mathcal{C}^2$ over $\R^k$. Then $2/{p'} \in (1,2)$, and thus we have the continuous embeddings $\mathbf{W}^{2/{p'},p}(\Omega) \hookrightarrow \mathbf{W}^{1,p}(\Omega) \hookrightarrow C(\overline{\Omega})$. By~\cite[Lemma~A.2]{BB1974}, the space $\mathbf{W}^{1,p}(\Omega)$ is invariant under $F$. Then we have
\begin{eqnarray*}
	u \in \mathbf{W}^{2/{p'},p}(\Omega)  \Rightarrow
	u \in \mathbf{W}^{1,p}(\Omega)  \Rightarrow
	F(u) \in \mathbf{W}^{1,p}(\Omega)  \Rightarrow
	\divg(F(u)) \in \L^p(\Omega).
\end{eqnarray*}
The corresponding $\L^{\infty}$ regularity in time for $F(u)$ will serve in order to get Lipschitz estimates for the nonlinear term $\divg(F(u))$ in section~\ref{sec-wellposed}.

\subsubsection{$\L^p$-maximal regularity for system~\eqref{sysLp}} \label{sec-wellposed}

We endow the space $\mathscr{U}$ with the norm given by
\begin{eqnarray*}
	\|v \|_{\mathscr{U}} & := &
	\| v \|_{\W^{1,p}(0,T;\mathbf{L}^{p}(\Omega))} +
	\| v \|_{\L^p(0,T;\mathbf{W}^{2,p}(\Omega))} +
	\| v \|_{\L^{\infty}(0,T;\mathbf{W}^{2/p',p}(\Omega))}.
\end{eqnarray*}
It is well-known that the Laplace operator with homogeneous Dirichlet boundary conditions admits $L^p$-maximal regularity property. That means that for $u_0 \in \mathbf{W}^{2/p',p}(\Omega)\cap \mathbf{W}_0^{1/{p'},p}(\Omega)$ and $f\in \L^p(0,T;\mathbf{L}^p(\Omega))$ the following system admits a unique solution $u \in \mathscr{U}$:
\begin{eqnarray} \label{sysLaplace}
\left\{ \begin{array}{rcl}
\dot{u} - \kappa \Delta u = f & & \text{in } \Omega \times (0,T), \\
u = 0 & & \text{on } \p \Omega \times (0,T), \\
u(\cdot,0) = u_0 & & \text{in } \Omega.
\end{array} \right.
\end{eqnarray}
Moreover, there exists a constant $C^{(\kappa)}_T$, non-decreasing with respect to $T$, such that for every $u_0 \in \mathbf{W}^{2/p',p}(\Omega)\cap \mathbf{W}_0^{1/{p'},p}(\Omega)$ and $f\in \L^p(0,T;\mathbf{L}^p(\Omega))$ the corresponding solution of system~\eqref{sysLaplace} satisfies
\begin{eqnarray} \label{estMR}
\| u\|_{\mathscr{U}} & \leq & C^{(\kappa)}_T\left( \|u_0\|_{\mathbf{W}^{2/p',p}(\Omega)}
+\|f\|_{\L^p(0,T;\mathbf{L}^p(\Omega))} \right).
\end{eqnarray}

A solution of~\eqref{sysLp} can be seen as a fixed point of the mapping
\begin{eqnarray*} \label{mappingN}
	\begin{array} {rrcl}
		\mathcal{N}: & \mathscr{U} & \rightarrow & \mathscr{U} \\
		& v & \mapsto & u,
	\end{array}
\end{eqnarray*}
where $u$ is defined as the solution of the linear system~\eqref{sysLaplace}, with the following function as data:
\begin{eqnarray*}
	f(v) & = & -\divg(F(v)).
\end{eqnarray*}
Lemma~\ref{lemma-estlip} below shows that $\mathcal{N}$ is well-defined. For $R>0$, we define
\begin{eqnarray*}
	\mathcal{B}^{(\kappa)}_T(R) & := &
	\left\{ v \in \mathscr{U}| \ \|v\|_{\mathscr{U}} \leq 2C^{(\kappa)}_T R \right\}.
\end{eqnarray*}
Note that for each $v \in \mathcal{B}^{(\kappa)}_T(R)$ and each $t$, the function $v(\cdot,t)$ is contained in the compact set $K_T(R)$, where
\begin{eqnarray*}
K_T(R) & = & [-2\overline{C}C^{(\kappa)}_T R ; 2\overline{C}C^{(\kappa)}_T R ]^k,
\end{eqnarray*}
and where $\overline{C}$ denotes the embedding constant of $\mathbf{W}^{2/{p'},p}(\Omega) \hookrightarrow C(\overline{\Omega})$. Let us show that for $R$ large enough, and $T$ small enough, the mapping $\mathcal{N}$ is a contraction on $\mathcal{B}^{(\kappa)}_T(R)$.

\begin{lemma} \label{lemma-estlip}
	Assume that $F \in \mathcal{C}^1(\R^k)$. Let be $R>0$ and $T >0$. Then there exists a constant $C>0$, which does not depend on $R$ or $T$, such that for $v \in \mathcal{B}^{(\kappa)}_T(R)$ we have
	\begin{eqnarray} \label{estlip1}
	\| \divg(F(v)) \|_{\L^p(0,T; \mathbf{L}^p(\Omega))} & \leq &
	C \left(1+2C^{(\kappa)}_TR\right)T^{1/p}\|F\|_{\mathcal{C}^1(K_T(R))}.
	\end{eqnarray}
	Moreover, if $F \in \mathcal{C}^2(\R^k)$, then there exists a constant $C>0$, which does not depend on $T$ or $R$, such that for all $v_1, v_2 \in \mathcal{B}^{(\kappa)}_T(R)$ we have
	\begin{eqnarray}
	\| \divg(F(v_1))-\divg(F(v_2)) \|_{\L^p(0,T; \mathbf{L}^p(\Omega))} & \leq &
	C \left(1+4C^{(\kappa)}_TR\right)T^{1/p}\|F\|_{\mathcal{C}^2(K_T(R))}
	\|v_1-v_2\|_{\L^{\infty}(0,T;\mathbf{W}^{2/p',p}(\Omega))}. \nonumber \\ \label{estlip2}
	\end{eqnarray}
\end{lemma}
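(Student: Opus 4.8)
The plan is to prove both estimates by reducing, at each fixed time $t$, the spatial quantity $\|\divg(F(v(\cdot,t)))\|_{\mathbf{L}^p(\Omega)}$ to a bound that is uniform in $t$, and then to integrate over $(0,T)$ to produce the factor $T^{1/p}$. The starting observation is that for $v \in \mathcal{B}^{(\kappa)}_T(R)$ one has, for every $t$, $\|v(\cdot,t)\|_{\mathbf{W}^{2/p',p}(\Omega)} \le \|v\|_{\mathscr{U}} \le 2C^{(\kappa)}_T R$, and hence, via the embedding $\mathbf{W}^{2/p',p}(\Omega) \hookrightarrow C(\overline{\Omega})$ with constant $\overline{C}$, the range of $v(\cdot,t)$ is contained in the compact set $K_T(R)$. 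This is exactly what makes $\|F\|_{\mathcal{C}^1(K_T(R))}$ (resp. $\|F\|_{\mathcal{C}^2(K_T(R))}$) the relevant finite quantity and lets us invoke global $\mathcal{C}^1$/$\mathcal{C}^2$ bounds on $F$ restricted to $K_T(R)$.

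For \eqref{estlip1}, I would use the chain rule $\divg(F(v)) = F'(v):\nabla v$ --- legitimate since $F$ acts as a Nemytskii operator on $\mathbf{W}^{1,p}(\Omega)$ by \cite[Lemma~A.2]{BB1974} --- together with the pointwise bound $\|\divg(F(v(\cdot,t)))\|_{\mathbf{L}^p(\Omega)} \le C\|F(v(\cdot,t))\|_{\mathbf{W}^{1,p}(\Omega)} \le C\|F\|_{\mathcal{C}^1(K_T(R))}\big(1+\|v(\cdot,t)\|_{\mathbf{W}^{1,p}(\Omega)}\big)$, the summand $1$ coming from the zeroth-order part of the substitution estimate. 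Bounding $\|v(\cdot,t)\|_{\mathbf{W}^{1,p}(\Omega)} \le C\|v(\cdot,t)\|_{\mathbf{W}^{2/p',p}(\Omega)} \le 2C^{(\kappa)}_T R$ makes the right-hand side independent of $t$; raising to the power $p$, integrating on $(0,T)$, and taking the $p$-th root then yields precisely the factor $(1+2C^{(\kappa)}_T R)T^{1/p}$.

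For the Lipschitz estimate \eqref{estlip2} I would split $\divg(F(v_1)) - \divg(F(v_2)) = (F'(v_1)-F'(v_2)):\nabla v_1 + F'(v_2):\nabla(v_1-v_2)$. The first term is estimated by $\|F''\|_{L^\infty(K_T(R))}\,\|v_1(\cdot,t)-v_2(\cdot,t)\|_{C(\overline\Omega)}\,\|\nabla v_1(\cdot,t)\|_{\mathbf{L}^p(\Omega)}$, where one places the difference in $L^\infty$ (hence controls it by $\|v_1-v_2\|_{\mathbf{W}^{2/p',p}(\Omega)}$, the very norm that appears on the right of \eqref{estlip2}) while the large gradient factor is kept in $\mathbf{L}^p$ and bounded by $2C^{(\kappa)}_T R$; the second term is estimated by $\|F'\|_{L^\infty(K_T(R))}\|\nabla(v_1-v_2)\|_{\mathbf{L}^p(\Omega)} \le C\|F\|_{\mathcal{C}^2}\|v_1-v_2\|_{\mathbf{W}^{2/p',p}(\Omega)}$. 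Both contributions are dominated by $\|F\|_{\mathcal{C}^2(K_T(R))}$ times $\|v_1-v_2\|_{\mathbf{W}^{2/p',p}(\Omega)}$, and collecting the radius-dependent prefactors generated by $v_1$ and $v_2$ (each bounded by $2C^{(\kappa)}_T R$) gives the constant $(1+4C^{(\kappa)}_T R)$. Passing to the $\L^\infty(0,T;\cdot)$ norm in $t$ for the difference and integrating the remaining $t$-dependence produces the $T^{1/p}$ factor as before.

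The routine parts are the chain rule and the Sobolev embeddings; the one point that requires care is the bilinear splitting in the Lipschitz estimate, namely the choice to measure $v_1-v_2$ in the strong $L^\infty$/$\mathbf{W}^{2/p',p}$ norm while the accompanying gradient of $v_1$ is only in $\mathbf{L}^p$. This H\"older-type pairing is precisely what forces the $\mathcal{C}^2$ regularity of $F$ (needed to Lipschitz-bound $F'$ on $K_T(R)$) and dictates which norm of $v_1-v_2$ survives on the right-hand side; everything else is a matter of tracking the constants, none of which need be sharp.
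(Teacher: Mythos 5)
Your proof is correct, and for the first estimate it coincides with the paper's: both use the substitution-operator bound $\| \divg(F(v)) \|_{\mathbf{L}^p(\Omega)} \leq \| F(v)\|_{\mathbf{W}^{1,p}(\Omega)} \leq C\|F\|_{\mathcal{C}^1(K_T(R))}\left(1+\|v\|_{\mathbf{W}^{1,p}(\Omega)}\right)$ from~\cite[Lemma~A.2]{BB1974}, the embedding $\mathbf{W}^{2/p',p}(\Omega)\hookrightarrow \mathbf{W}^{1,p}(\Omega)$, and integration in time of a bound uniform in $t$ to produce $T^{1/p}$. The only divergence is in the Lipschitz estimate: the paper simply invokes \cite[Lemma~A.3]{BB1974}, which gives $\|F(v_1)-F(v_2)\|_{\mathbf{W}^{1,p}(\Omega)} \leq C\|F\|_{\mathcal{C}^2(K_T(R))}\|v_1-v_2\|_{\mathbf{W}^{1,p}(\Omega)}\left(\|v_1\|_{\mathbf{W}^{1,p}(\Omega)}+\|v_2\|_{\mathbf{W}^{1,p}(\Omega)}+1\right)$ as a black box, whereas you re-derive exactly that inequality by the splitting $(F'(v_1)-F'(v_2)):\nabla v_1 + F'(v_2):\nabla(v_1-v_2)$ together with the mean value theorem for $F'$. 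Your route is self-contained and makes transparent why $\mathcal{C}^2$ regularity of $F$ is needed and which norm of $v_1-v_2$ survives; the paper's is shorter but leans on the cited reference. One small point you should make explicit: the mean-value bound $|F'(v_1)-F'(v_2)|\leq \|F''\|_{L^\infty(K_T(R))}|v_1-v_2|$ uses that the segment joining $v_1(x,t)$ and $v_2(x,t)$ stays in $K_T(R)$, which holds here because $K_T(R)$ is a hypercube, hence convex; this is not a gap, but it is the hypothesis that makes your pointwise argument legitimate.
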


\begin{proof}
	The proofs of both estimates rely on results in the Appendix of~\cite{BB1974}. From~\cite[Lemma~A.2]{BB1974}, if $p>d$ then the following estimate holds:
	\begin{eqnarray*}
		\| \divg(F(v)) \|_{\mathbf{L}^p(\Omega)}  \leq
		\| F(v) \|_{\mathbf{W}^{1,p}(\Omega)}
		\leq
		C\|F\|_{\mathcal{C}^1(K_T(R))}
		\left( \|v\|_{\mathbf{W}^{1,p}(\Omega)}+1 \right).
	\end{eqnarray*}
	By integrating in time and using $\mathbf{W}^{2/p',p}(\Omega) \hookrightarrow \mathbf{W}^{1,p}(\Omega)$, we deduce
	\begin{eqnarray*}
		\| \divg(F(v)) \|_{\L^p(0,T; \mathbf{L}^p(\Omega))} & \leq &
		C \|F\|_{\mathcal{C}^1(K_T(R))}T^{1/p}
		\left( \|v\|_{\L^{\infty}(0,T;\mathbf{W}^{2/p',p}(\Omega))}+1 \right),
	\end{eqnarray*}
	which yields~\eqref{estlip1}. The second estimate is deduced from~\cite[Lemma~A.3]{BB1974}, which gives
	\begin{eqnarray*}
		\| \divg(F(v_1))-\divg(F(v_2)) \|_{\mathbf{L}^p(\Omega)} & \leq &
		\| F(v_1) - F(v_2) \|_{\mathbf{W}^{1,p}(\Omega)} \\
		& \leq &
		C\|F\|_{\mathcal{C}^2(K_T(R))}\|v_1-v_2\|_{\mathbf{W}^{1,p}(\Omega)}
		\left( \|v_1\|_{\mathbf{W}^{1,p}(\Omega)} + \|v_2\|_{\mathbf{W}^{1,p}(\Omega)}+1 \right) \\
		& \leq &
		C\|F\|_{\mathcal{C}^2(K_T(R))}\|v_1-v_2\|_{\mathbf{W}^{2/p',p}(\Omega)}
		\left( \|v_1\|_{\mathbf{W}^{2/p',p}(\Omega)} + \|v_2\|_{\mathbf{W}^{2/p',p}(\Omega)}+1 \right).
	\end{eqnarray*}
	By integrating in time we obtain~\eqref{estlip2} as previously.
\end{proof}

\begin{proposition} \label{propcont} \label{prop-bof}
	There exists $T_0(\kappa)>0$ such that, for all $T<T_0(\kappa)$, system~\eqref{sysLp} admits a unique solution $u \in \mathscr{U}$.
\end{proposition}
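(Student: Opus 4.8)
The plan is to realize $u$ as the unique fixed point of the map $\mathcal{N}$ via the Banach contraction principle on the closed ball $\mathcal{B}^{(\kappa)}_T(R)$, using the smallness factor $T^{1/p}$ supplied by Lemma~\ref{lemma-estlip}. First I would set $R := \|u_0\|_{\mathscr{U}_0}+1$ and restrict from the outset to $T \leq 1$. Since $C^{(\kappa)}_T$ is non-decreasing in $T$, the compact set $K_T(R)$ is contained in the fixed compact set $K_1(R)$ for all such $T$, whence $\|F\|_{\mathcal{C}^2(K_T(R))} \leq \|F\|_{\mathcal{C}^2(K_1(R))} =: M_R$ is bounded uniformly in $T \in (0,1]$. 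This freezing of the range of admissible $v$ is what makes the subsequent estimates genuinely uniform as $T \to 0$.

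Next I would verify that $\mathcal{N}$ maps $\mathcal{B}^{(\kappa)}_T(R)$ into itself. For $v \in \mathcal{B}^{(\kappa)}_T(R)$, applying the maximal-regularity estimate \eqref{estMR} to \eqref{sysLaplace} with right-hand side $f = -\divg(F(v))$ gives $\|\mathcal{N}(v)\|_{\mathscr{U}} \leq C^{(\kappa)}_T\big(R + \|\divg(F(v))\|_{\mathscr{F}}\big)$, while \eqref{estlip1} bounds the last term by $C\big(1+2C^{(\kappa)}_1 R\big)T^{1/p}M_R$. The latter tends to $0$ as $T \to 0$, hence is $\leq R$ once $T$ is small, so $\|\mathcal{N}(v)\|_{\mathscr{U}} \leq 2C^{(\kappa)}_T R$, i.e. $\mathcal{N}(v) \in \mathcal{B}^{(\kappa)}_T(R)$. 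For the contraction property, given $v_1,v_2 \in \mathcal{B}^{(\kappa)}_T(R)$, the difference $\mathcal{N}(v_1)-\mathcal{N}(v_2)$ solves \eqref{sysLaplace} with zero initial datum and right-hand side $-(\divg(F(v_1))-\divg(F(v_2)))$; combining \eqref{estMR} with \eqref{estlip2} yields $\|\mathcal{N}(v_1)-\mathcal{N}(v_2)\|_{\mathscr{U}} \leq C^{(\kappa)}_T\,C\big(1+4C^{(\kappa)}_1 R\big)T^{1/p}M_R\,\|v_1-v_2\|_{\L^{\infty}(0,T;\WW^{2/p',p}(\Omega))}$. Since the $\L^{\infty}$-in-time norm on the right is dominated by $\|v_1-v_2\|_{\mathscr{U}}$ (it is one of the three summands defining that norm), the prefactor can be driven strictly below $1$ by decreasing $T$ further. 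Taking $T_0(\kappa)$ to be the smaller of the two thresholds thus produced (and $\leq 1$), the Banach fixed-point theorem provides, for every $T < T_0(\kappa)$, a unique fixed point $u \in \mathcal{B}^{(\kappa)}_T(R)$, which is a solution of \eqref{sysLp}.

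The contraction argument yields uniqueness only inside the ball, so to obtain uniqueness in all of $\mathscr{U}$ I would take two solutions sharing the datum $u_0$, use the embedding $\mathscr{U} \hookrightarrow \mathcal{C}([0,T];\mathscr{U}_0)$ to ensure both remain in a common ball on a short initial subinterval, apply the contraction there to force equality, and then propagate the equality stepwise across $[0,T]$.

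I expect the main delicate point to be precisely the uniformity of the constants as $T \to 0$, rather than any single calculation. Because $K_T(R)$, and therefore $\|F\|_{\mathcal{C}^1(K_T(R))}$ and $\|F\|_{\mathcal{C}^2(K_T(R))}$, depend on $T$ through $C^{(\kappa)}_T$, one cannot extract the smallness naively: it is essential to confine $T$ to a bounded interval first (freezing the range of $v$ and hence the $\mathcal{C}^1$/$\mathcal{C}^2$ norms of $F$), after which the explicit $T^{1/p}$ factor of Lemma~\ref{lemma-estlip} does provide the smallness that simultaneously closes the self-mapping and the contraction estimates.
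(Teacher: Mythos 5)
Your proposal is correct and follows essentially the same route as the paper's proof: a Banach fixed-point argument for $\mathcal{N}$ on the closed ball $\mathcal{B}^{(\kappa)}_T(R)$, using~\eqref{estMR} together with~\eqref{estlip1} for invariance of the ball and~\eqref{estMR} together with~\eqref{estlip2} for the contraction, with the smallness coming from the factor $T^{1/p}$. Your two refinements --- restricting to $T\leq 1$ so that the norms $\|F\|_{\mathcal{C}^1(K_T(R))}$ and $\|F\|_{\mathcal{C}^2(K_T(R))}$ are uniformly controlled as $T\to 0$, and upgrading uniqueness from the ball $\mathcal{B}^{(\kappa)}_T(R)$ to all of $\mathscr{U}$ by a short-interval continuation argument --- address points the paper leaves implicit, and are welcome additions rather than departures from its method.
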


\begin{proof}
	Consider any $R \geq \|u_0\|_{\mathbf{W}^{2/p',p}(\Omega)}$ and $v \in \mathcal{B}^{(\kappa)}_T(R)$. Estimate~\eqref{estlip1} shows that by choosing $T>0$ small enough we can guarantee that $\| \divg(F(v)) \|_{\L^p(0,T; \mathbf{L}^p(\Omega))} \leq \|u_0\|_{\mathbf{W}^{2/p',p}(\Omega)}$. Thus, from~\eqref{estMR}, $\mathcal{N}(v)$ lies in $\mathcal{B}^{(\kappa)}_T(R)$, which guarantees that the set $\mathcal{B}^{(\kappa)}_T(R)$ is stable under $\mathcal{N}$. Now, for $v_1$, $v_2$ in $\mathcal{B}^{(\kappa)}_T(R)$, the difference $\mathcal{N}(v_2) - \mathcal{N}(v_1)$ satisfies the linear system~\eqref{sysLaplace}, with null initial condition, and as right-and-side the term which is estimated in~\eqref{estlip2}. Then estimate~\eqref{estMR} combined with~\eqref{estlip2} shows that
	\begin{eqnarray*}
		\|\mathcal{N}(v_2) - \mathcal{N}(v_1) \|_{\mathscr{U}} & \leq &
		C C_T^{(\kappa)} T^{1/p} \left(1+4C^{(\kappa)}_T R\right)
		\|v_2-v_1 \|_{\mathscr{U}},
	\end{eqnarray*}
	and thus by choosing $T>0$ small enough we make $\mathcal{N}$ a contraction in $\mathcal{B}^{(\kappa)}_T(R)$. It is clear that the set $\mathcal{B}^{(\kappa)}_T(R)$ is closed in $\mathscr{U}$. Thus from the Banach fixed-point theorem, there exists a unique solution $u$ for system~\eqref{sysLp}.
\end{proof}


As a consequence, since the change of variable $X[\eta]$ lies in $\mathcal{C}^2(\overline{\Omega})$, it is easy to see that a function $u$ is solution of system~\eqref{sysLp} if and only if $\tilde{u}:(y,t) \mapsto u(X(y),t)$ is solution of system~\eqref{mainsysreg}. Then, from the previous proposition, system~\eqref{mainsysreg} admits a unique solution $\tilde{u}$, whose regularity in $\mathscr{U}$ follows from~\cite[Lemma~A.2]{BB1974}.

\hfill \\
Note that Proposition~\ref{prop-bof} provides only a local-in-time result, and that the maximal time of existence can {\it a priori} tend to zero as $\kappa$ tends to zero. Deriving energy estimates which could enable us to prove that this maximal time existence can be positive and independent of $\kappa$ is not the purpose of this article and is a delicate issue, especially in this general framework and in this context of strong solutions.

\subsubsection{On the linearized system} \label{sec-linLP}

Given $u \in \mathscr{U}$, $f\in \L^p(0,T;\LL^p(\Omega))$ and $v_0 \in \WW^{2/p',p}(\Omega)$, we study the following linearized system, whose unknown is denoted by $v$:

\begin{eqnarray}
\left\{ \begin{array} {rcl}
\dot{v} -\kappa \Delta v + \divg(F'(u).v) = f & & \text{in } \Omega\times (0,T), \\
v = 0 & & \text{on } \p \Omega \times (0,T), \\
v(\cdot,0) = v_0 & & \text{in } \Omega.
\end{array} \right.
\label{syslin}
\end{eqnarray}
System~\eqref{syslin} can be rewritten in operator form as
\begin{eqnarray}
\dot{v} + Av + \beta(t)v = f \quad \text{almost everywhere on } (0,T), & &
v_{|t=0} = 0, \label{syslin-operator}
\end{eqnarray}
where $A$ denotes the Laplace operator with domain $D(A) =\WW^{2,p}(\Omega) \cap \WW^{1,p}_0(\Omega)$, and $\beta(t)v = \divg(F'(u(\cdot,t)).v)$. Sticking to the framework of~\cite[Proposition~1.3]{Arendt2007} gives a sufficient condition on $t\mapsto \beta(t)$ for having the $\L^p$-maximal regularity for system~\eqref{syslin}. We recall this statement in our context and denote $L_Av = \dot{v} +Av$:

\begin{proposition} \label{prop1.3}
	Assume that $\beta:(0,T) \rightarrow \mathcal{L}(D(A), \LL^p(\Omega))$ is strongly measurable. Assume that there exists $C \geq 0$ such that
	\begin{eqnarray} \label{ineq-Arendt}
	\| \beta(t)v \|_{\L^p(\Omega)} & \leq & \frac{1}{2M} \|v\|_{D(A)} + C \|v\|_{\LL^p(\Omega)}
	\end{eqnarray}
	for all $t\in(0,T)$ and $v\in D(A)$, where the constant $M$ must satisfy for all $\lambda \geq 0$
	\begin{eqnarray*}
		\left\| (\lambda + L_A)^{-1} \right\|_{\mathcal{L}(\L^p(0,T;\mathbf{L}^p(\Omega)); \mathscr{U})} \leq M,
		& &
		\left\| (1+\lambda)(\lambda\Id + L_A)^{-1} \right\|_{\mathcal{L}(\L^p(0,T;\mathbf{L}^p(\Omega)); \mathscr{U})} \leq M.
	\end{eqnarray*}
	Then for all $f\in \L^p(0,T;\LL^p(\Omega))$, $v_0 \in \WW^{2/p',p}(\Omega)$ there exists a unique $v\in \mathscr{U}$ satisfying~\eqref{syslin-operator}.
\end{proposition}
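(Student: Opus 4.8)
The plan is to obtain \eqref{syslin-operator} by treating $\beta$ as a \emph{relatively bounded} perturbation of the operator $L_A$, which already enjoys $\L^p$-maximal regularity thanks to the maximal regularity of the Laplacian recalled above. I first treat homogeneous initial data, working on the closed subspace $\mathscr{U}_{00} = \{ v \in \mathscr{U} \mid v(\cdot,0) = 0 \}$, on which $L_A : \mathscr{U}_{00} \to \L^p(0,T;\LL^p(\Omega))$ is an isomorphism with the resolvent bounds assumed in the statement. I introduce the multiplication operator $\mathcal{B}$ defined by $(\mathcal{B}v)(t) = \beta(t)\,v(t)$; the strong measurability of $t \mapsto \beta(t)$ together with the measurability of $t \mapsto v(t)$ ensures that $\mathcal{B}v$ is measurable, and \eqref{ineq-Arendt}, integrated in time, shows that $\mathcal{B}$ maps $\mathscr{U}$ boundedly into $\L^p(0,T;\LL^p(\Omega))$. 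In this language \eqref{syslin-operator} reads $(L_A + \mathcal{B})v = f$.

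The heart of the argument is to show that $\lambda + L_A + \mathcal{B}$ is invertible for $\lambda$ large, and then to remove the shift. Writing $w = (\lambda + L_A)^{-1}h$ for $h \in \L^p(0,T;\LL^p(\Omega))$ and inserting \eqref{ineq-Arendt}, I estimate $\| \mathcal{B} w \|_{\L^p(0,T;\LL^p(\Omega))} \le \tfrac{1}{2M}\| w\|_{\L^p(0,T;D(A))} + C \|w\|_{\L^p(0,T;\LL^p(\Omega))}$. The first resolvent estimate controls the domain norm uniformly, $\|w\|_{\L^p(0,T;D(A))} \le M\|h\|$, so that the leading term contributes exactly $\tfrac12\|h\|$; the second resolvent estimate, read in the base norm $\L^p(0,T;\LL^p(\Omega))$, furnishes the decay $\|w\|_{\L^p(0,T;\LL^p(\Omega))} \le \tfrac{M}{1+\lambda}\|h\|$ of the lower-order term. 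Hence $\|\mathcal{B}(\lambda+L_A)^{-1}\|_{\mathcal{L}(\L^p(0,T;\LL^p(\Omega)))} \le \tfrac12 + \tfrac{CM}{1+\lambda}$, which is strictly less than $1$ as soon as $\lambda > 2CM$. For such $\lambda$ the operator $\Id + \mathcal{B}(\lambda + L_A)^{-1}$ is invertible by a Neumann series, and the factorization $\lambda + L_A + \mathcal{B} = \big(\Id + \mathcal{B}(\lambda+L_A)^{-1}\big)(\lambda + L_A)$ shows that $\lambda + L_A + \mathcal{B} : \mathscr{U}_{00} \to \L^p(0,T;\LL^p(\Omega))$ is an isomorphism.

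To pass from the shifted problem to $\lambda = 0$, I use the exponential conjugation $v(t) = e^{\lambda t}\tilde v(t)$: on the finite interval $(0,T)$ this is a bounded, boundedly invertible multiplier on every $\L^p$-space, it preserves $\mathscr{U}_{00}$, and it turns $(\lambda + L_A + \mathcal{B})\tilde v = e^{-\lambda t}f$ into $(L_A + \mathcal{B})v = f$; thus existence and uniqueness transfer to \eqref{syslin-operator} with $v_0 = 0$. Finally, for general $v_0 \in \WW^{2/p',p}(\Omega)$ I lift it by solving $\dot z + Az = 0$, $z(\cdot,0) = v_0$, which admits a solution $z \in \mathscr{U}$ by the maximal regularity of $A$ alone; since $z \in \L^p(0,T;D(A))$, the relative bound \eqref{ineq-Arendt} gives $\mathcal{B}z \in \L^p(0,T;\LL^p(\Omega))$, and I set $v = z + r$, where $r \in \mathscr{U}_{00}$ solves $(L_A + \mathcal{B})r = f - \mathcal{B}z$. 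Uniqueness follows from the injectivity of $L_A + \mathcal{B}$ on $\mathscr{U}_{00}$.

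The main obstacle is the calibration of the relative bound against the two resolvent estimates: the constant $\tfrac{1}{2M}$ in \eqref{ineq-Arendt} is tuned precisely so that the non-decaying (domain-norm) contribution to $\|\mathcal{B}(\lambda+L_A)^{-1}\|$ equals $\tfrac12 < 1$ uniformly in $\lambda$, while the genuinely lower-order term is made small by taking $\lambda$ large. One must be attentive that the second resolvent estimate has to be used through its $\mathcal{O}\big(1/(1+\lambda)\big)$ decay measured in the base norm $\L^p(0,T;\LL^p(\Omega))$, which is exactly the parabolic resolvent behaviour underlying the $\L^p$-maximal regularity of $A$; this is the point at which I invoke \cite[Proposition~1.3]{Arendt2007}.
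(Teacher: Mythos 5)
The paper itself never proves this proposition: it is explicitly \emph{recalled} from the literature (``Sticking to the framework of~\cite[Proposition~1.3]{Arendt2007}\dots We recall this statement in our context''), so there is no internal proof to compare against, and your proposal must stand on its own. It does: your argument is correct, and it is in substance the standard perturbation proof behind the cited result. The four steps all check out: (i) the multiplication operator $\mathcal{B}$, $(\mathcal{B}v)(t)=\beta(t)v(t)$, is a relatively bounded perturbation of $L_A$ on the zero-trace maximal-regularity space $\mathscr{U}_{00}$, on which $L_A$ is an isomorphism (this is implicit in the hypotheses, since invertibility of $\lambda+L_A$ is presupposed by the resolvent bounds, and is also what estimate~\eqref{estMR} of the paper provides); (ii) the first resolvent bound makes the domain-norm contribution of~\eqref{ineq-Arendt} exactly $\tfrac12$ uniformly in $\lambda$, while the second bound makes the lower-order contribution $CM/(1+\lambda)$ small, so $\Id + \mathcal{B}(\lambda+L_A)^{-1}$ is invertible by Neumann series for $\lambda > 2CM$ and the factorization $\lambda+L_A+\mathcal{B} = \bigl(\Id+\mathcal{B}(\lambda+L_A)^{-1}\bigr)(\lambda+L_A)$ gives an isomorphism; (iii) the conjugation $v = e^{\lambda t}\tilde v$ is legitimate on the bounded interval $(0,T)$, commutes with the spatial operator $\beta(t)$, preserves $\mathscr{U}_{00}$, and removes the shift; (iv) nonzero initial data are handled by subtracting the free solution $z$ of $\dot z + Az = 0$, $z(\cdot,0)=v_0$, which lies in $\mathscr{U}$ by maximal regularity of the Laplacian alone, so that $\mathcal{B}z$ is an admissible right-hand side for the zero-data problem. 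Two small remarks. First, your closing sentence ``this is the point at which I invoke \cite[Proposition~1.3]{Arendt2007}'' is unnecessary and slightly misleading: both resolvent bounds are \emph{hypotheses} of the statement you are proving, so nothing needs to be imported from the reference—your proof is self-contained, which is precisely its value here. Second, be attentive to norm bookkeeping: \eqref{ineq-Arendt} is phrased with $\|v\|_{D(A)}$ while the resolvent bounds are phrased in $\mathcal{L}(\L^p(0,T;\LL^p(\Omega));\mathscr{U})$, and the clean calibration $\tfrac{1}{2M}\cdot M=\tfrac12$ requires $\|w\|_{\L^p(0,T;D(A))} \le \|w\|_{\mathscr{U}}$; this holds only under a consistent reading of the two norms (graph norm on $D(A)$ dominated by the $\mathscr{U}$-norm, possibly after absorbing a fixed equivalence constant into $M$), which is the reading intended in the paper's informal restatement and in~\cite{Arendt2007}. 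With that caveat made explicit, your argument is a complete proof of the quoted proposition—something the paper delegates entirely to the reference.
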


The wellposedness of system~\eqref{syslin}, in the context of the notion of $L^p$-maximal regularity, is given by the following result.

\begin{proposition} \label{propsyslinwell}
	Assume that $u \in \mathscr{U}$, $f\in \L^p(0,T;\LL^p(\Omega))$ and $v_0 \in \WW^{2/p',p}(\Omega)$. Then system~\eqref{syslin} admits a unique solution $v\in \mathscr{U}$. Moreover, there exists a constant $C(u)>0$ independent of $(f,v_0)$, such that
	\begin{eqnarray*}
	\| v \|_{\mathscr{U}} & \leq & C(u) \left(
	\| f\|_{\L^p(0,T;\LL^p(\Omega))} + \| v_0 \|_{\WW^{2/p',p}(\Omega)}
	\right).
	\end{eqnarray*}
\end{proposition}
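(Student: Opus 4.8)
The plan is to cast system~\eqref{syslin} in the operator form~\eqref{syslin-operator} and to apply Proposition~\ref{prop1.3}, with $A$ the Dirichlet Laplacian on $D(A) = \WW^{2,p}(\Omega)\cap\WW^{1,p}_0(\Omega)$ and the time-dependent perturbation $\beta(t)v = \divg(F'(u(\cdot,t)).v)$. First I would expand $\beta(t)v$ by the product rule into a first-order part, in which $\nabla v$ is contracted against the bounded coefficient $F'(u(\cdot,t))$, and a zeroth-order part, in which $v$ is multiplied by a coefficient of the form $F''(u(\cdot,t))\nabla u(\cdot,t)$. The point of this splitting is that $\beta$ is a genuinely lower-order (at most first-order) perturbation of the second-order operator $A$, so that its relative $A$-bound can be made arbitrarily small.

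Next I would record the uniform-in-time bounds on the coefficients. Since $u\in\mathscr{U}\hookrightarrow C([0,T];\mathscr{U}_0)$ and, for $p>d$, $\mathscr{U}_0\hookrightarrow \WW^{1,p}(\Omega)\hookrightarrow C(\overline{\Omega})$, the quantities $\sup_t\|u(\cdot,t)\|_{C(\overline{\Omega})}$ and $\sup_t\|\nabla u(\cdot,t)\|_{\LL^p(\Omega)}$ are finite; as $F\in\mathcal{C}^2$, this makes $\sup_t\|F'(u(\cdot,t))\|_{\LL^\infty}$ and $\sup_t\|F''(u(\cdot,t))\nabla u(\cdot,t)\|_{\LL^p}$ finite as well, with constants depending only on $u$ and $F$. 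Continuity of $t\mapsto u(\cdot,t)$ together with continuity of $F'$ then yields strong measurability of $t\mapsto\beta(t)\in\mathcal{L}(D(A),\LL^p(\Omega))$, as required.

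The heart of the argument is the verification of~\eqref{ineq-Arendt}. I would estimate
\[
\|\beta(t)v\|_{\LL^p(\Omega)} \;\le\; \|F'(u(\cdot,t))\|_{\LL^\infty}\,\|\nabla v\|_{\LL^p(\Omega)} \;+\; \|F''(u(\cdot,t))\nabla u(\cdot,t)\|_{\LL^p(\Omega)}\,\|v\|_{\LL^\infty(\Omega)},
\]
and then invoke the interpolation inequalities $\|\nabla v\|_{\LL^p}\le \epsilon\|v\|_{D(A)}+C_\epsilon\|v\|_{\LL^p}$ and $\|v\|_{\LL^\infty}\le \epsilon\|v\|_{D(A)}+C_\epsilon\|v\|_{\LL^p}$, both valid for $p>d$ (so that in particular $\WW^{2,p}(\Omega)\hookrightarrow\LL^\infty(\Omega)$). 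Choosing $\epsilon$ small enough, depending on $u$, $F$ and the maximal-regularity constant $M$, keeps the coefficient of $\|v\|_{D(A)}$ at or below $\tfrac{1}{2M}$, with the remainder absorbed into a constant $C$ multiplying $\|v\|_{\LL^p}$. This is exactly hypothesis~\eqref{ineq-Arendt}, so Proposition~\ref{prop1.3} gives existence and uniqueness of $v\in\mathscr{U}$.

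Finally, the a priori estimate would follow from the fact that the solution operator $(f,v_0)\mapsto v$ is a bounded linear bijection from $\L^p(0,T;\LL^p(\Omega))\times\WW^{2/p',p}(\Omega)$ onto $\mathscr{U}$; by the bounded inverse theorem its norm is finite, and unwinding the perturbation argument shows that this norm, denoted $C(u)$, depends on $u$ only through $M$ and through the interpolation constant fixed above. The main obstacle is the relative-boundedness step: one must genuinely use that $\beta$ carries at most one derivative, so that interpolation yields an arbitrarily small relative $A$-bound, and simultaneously keep all coefficient bounds uniform in $t$ --- it is this uniformity, guaranteed by the embedding $\mathscr{U}\hookrightarrow C([0,T];\mathscr{U}_0)$, that lets the single inequality~\eqref{ineq-Arendt} hold for every $t\in(0,T)$ with one constant.
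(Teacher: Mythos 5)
Your proof is correct and has the same skeleton as the paper's: both cast~\eqref{syslin} in the operator form~\eqref{syslin-operator} and reduce the whole statement to verifying hypothesis~\eqref{ineq-Arendt} of Proposition~\ref{prop1.3}, i.e.\ that $\beta(t)v = \divg(F'(u(\cdot,t)).v)$ is relatively $A$-bounded with arbitrarily small relative bound, uniformly in $t$. Where you genuinely differ is in how that inequality is checked. The paper never expands the divergence: it estimates $\| \beta(t)v \|_{\LL^p(\Omega)} \leq C\| F'(u(\cdot,t)).v\|_{\WW^{1,p}(\Omega)}$, invokes the Banach algebra property of $\WW^{1,p}(\Omega)$ so that this is controlled by $\| F'(u)\|_{\L^{\infty}(0,T;\WW^{1,p}(\Omega))} \|v\|_{\WW^{2/p',p}(\Omega)}$ (the first factor being finite by the Nemytskii lemma of~\cite{BB1974}), and then uses the fractional interpolation $\|v\|_{\WW^{2/p',p}(\Omega)} \leq \|v\|^{1/p}_{\LL^p(\Omega)}\|v\|^{1/{p'}}_{\WW^{2,p}(\Omega)}$ together with Young's inequality to make the $\WW^{2,p}$-coefficient as small as desired. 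You instead split $\beta(t)v$ by the product rule into a first-order term with $\LL^{\infty}$ coefficient $F'(u)$ and a zeroth-order term with $\LL^p$ coefficient $F''(u)\nabla u$, bound each by H\"older, and conclude with Ehrling/Gagliardo--Nirenberg inequalities for $\|\nabla v\|_{\LL^p}$ and $\|v\|_{\LL^{\infty}}$ against $\epsilon\|v\|_{\WW^{2,p}} + C_{\epsilon}\|v\|_{\LL^p}$. Both routes are valid and rest on the same embeddings ($\mathscr{U}\hookrightarrow C([0,T];\mathscr{U}_0)$, $p>d$) and on $F\in\mathcal{C}^2$; yours is somewhat more elementary (no Banach algebra lemma, no fractional Sobolev interpolation) at the cost of tracking two terms and explicitly invoking $F''$, while the paper's is more compact and reuses the~\cite{BB1974} machinery already needed for the nonlinear estimates. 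Your derivation of the a priori bound $C(u)$ from the bounded inverse theorem applied to the linear solution map is also legitimate; the paper leaves this point implicit in the maximal regularity framework.
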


\begin{proof}
	From Proposition~\ref{prop1.3}, it is sufficient to verify that the inequality~\eqref{ineq-Arendt} holds for $\beta(t)v = \divg(F'(u(\cdot,t)).v)$. Recall that $\WW^{1,p}(\Omega)$ is a Banach algebra (this follows from~\cite[Lemma~A.1]{BB1974}).
	For almost all $t\in (0,T)$, we estimate
	\begin{eqnarray*}
		\| \beta(t)v \|_{\L^p(\Omega)}  \leq  C\| F'(u(\cdot,t)).v\|_{\WW^{1,p}(\Omega)}
		& \leq & C\| F'(u(\cdot,t))\|_{\WW^{1,p}(\Omega)} \|v\|_{\WW^{1,p}(\Omega)} \\
		& \leq & C\| F'(u)\|_{\L^{\infty}(0,T;\WW^{1,p}(\Omega))} \|v\|_{\WW^{2/p',p}(\Omega)}.
	\end{eqnarray*}
	Note that $\| F'(u)\|_{\L^{\infty}(0,T;\WW^{1,p}(\Omega))}$ is finite from~\cite[Lemma~A.2]{BB1974}, since $F'$ is of class $\mathcal{C}^1$. By interpolation we have $
	\|v\|_{\WW^{2/p',p}(\Omega)}  \leq  \|v\|^{1/p}_{\LL^p(\Omega)}\|v\|^{1/{p'}}_{\WW^{2,p}(\Omega)}$ (see~\cite[Theorem~6.4.5, p.~152]{Bergh} for instance),
	and thus we deduce by Young's inequality
	\begin{eqnarray*}
		\| \beta(t)v \|_{\L^p(\Omega)} & \leq & C\| F'(u)\|_{\L^{\infty}(0,T;\WW^{1,p}(\Omega))}
		\left(\frac{\alpha^p}{p}\|v\|_{\LL^p(\Omega)} + \frac{1}{p'\alpha^{p'}}\|v\|_{\WW^{2,p}(\Omega)} \right),
	\end{eqnarray*}
	for all $\alpha >0$. By choosing $\alpha$ large enough we conclude the proof.
\end{proof}

\subsubsection{On the adjoint system} \label{App2-adj}

Given $u \in \mathscr{U}$ and $q_T \in \mathbf{H}^{-1}(\Omega)$, we study in this subsection the wellposedness of the following backward linear system for the unknown $q$:

\begin{eqnarray}
\left\{\begin{array} {rcl}
-\dot{q} -\kappa \Delta q + F'(u)^{\ast}.\nabla q = 0 & & \text{in } \Omega \times (0,T), \\
q = 0 & & \text{on } \p \Omega \times (0,T), \\
q(\cdot, T) = q_T & & \text{in } \Omega.
\end{array} \right. \label{sysadjLPmax}
\end{eqnarray}
We define a {\it very weak} solution of system~\eqref{sysadjLPmax} by the method of transposition.

\begin{definition} \label{deftrans}
Assume that $u \in \mathscr{U}$ and $q_T \in \mathbf{H}^{-1}(\Omega)$. A function $q \in \L^{p'}(0,T;\mathbf{L}^{p'}(\Omega))$ is a solution of system~\eqref{sysadjLPmax}, in the sense of transposition, if
\begin{eqnarray}
\int_0^T \int_\Omega q\cdot f \, \d x \d t & = &
\langle q_T ; \varphi(T) \rangle_{\mathbf{H}^{-1}(\Omega) ; \mathbf{H}^1_0(\Omega)}
\label{varform}
\end{eqnarray}
for all $f \in \L^p(0,T;\mathbf{L}^p(\Omega))$, where $\varphi$ is the solution of system
\begin{eqnarray}
\left\{ \begin{array} {rcl}
\dot{\varphi} -\kappa \Delta \varphi + \divg(F'(u).\varphi) = f & & \text{in } \Omega\times (0,T), \\
\varphi = 0 & & \text{on } \p \Omega \times (0,T), \\
\varphi(\cdot,0) = 0 & & \text{in } \Omega.
\end{array} \right.
\label{syslindefadj}
\end{eqnarray}

\end{definition}

Note that due to Proposition~\ref{propsyslinwell}, the function $\varphi(\cdot,T)$ which appears in~\eqref{varform} is well-defined in $\mathbf{H}_0^1(\Omega)$, 
because of the continuous embedding $\mathbf{W}^{2/{p'},p}(\Omega) \hookrightarrow \mathbf{H}^1(\Omega)$, since $2/{p'} \geq 1$.

\begin{proposition}
Assume that $u \in \mathscr{U}$. For all $q_T \in \mathbf{H}^{-1}(\Omega)$, system~\eqref{sysadjLPmax} admits a unique solution $q$ in the sense of Definition~\ref{deftrans}. Moreover, there exists a constant $C(u)>0$ independent of $q_T$ such that
\begin{eqnarray*}
\| q\|_{\L^{p'}(0,T;\mathbf{L}^{p'}(\Omega))} & \leq & C(u) \|q_T\|_{\mathbf{H}^{-1}(\Omega)}.
\end{eqnarray*}

\end{proposition}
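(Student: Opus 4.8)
The plan is to construct $q$ by the standard transposition (duality) argument, relying on the well-posedness of the linearized forward problem~\eqref{syslindefadj} established in Proposition~\ref{propsyslinwell}. First I would introduce the solution map $S: f \mapsto \varphi$ which to each $f \in \L^p(0,T;\mathbf{L}^p(\Omega))$ associates the unique solution $\varphi \in \mathscr{U}$ of~\eqref{syslindefadj} with zero initial datum. By Proposition~\ref{propsyslinwell} this map is well-defined, linear, and satisfies $\|\varphi\|_{\mathscr{U}} \leq C(u)\|f\|_{\L^p(0,T;\mathbf{L}^p(\Omega))}$. Composing $S$ with the trace-at-$T$ operator and using the embeddings $\mathscr{U} \hookrightarrow C([0,T];\mathscr{U}_0)$ and $\mathscr{U}_0 = \mathbf{W}^{2/p',p}(\Omega) \cap \mathbf{W}_0^{1/p',p}(\Omega) \hookrightarrow \mathbf{H}^1_0(\Omega)$ (valid since $2/p' \geq 1$), I obtain that $f \mapsto \varphi(\cdot,T)$ is linear and continuous from $\L^p(0,T;\mathbf{L}^p(\Omega))$ into $\mathbf{H}^1_0(\Omega)$, with a bound of the same type.

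Next I would define the linear functional $\Lambda$ on $\L^p(0,T;\mathbf{L}^p(\Omega))$ by $\Lambda(f) := \langle q_T ; \varphi(\cdot,T) \rangle_{\mathbf{H}^{-1}(\Omega); \mathbf{H}^1_0(\Omega)}$. From the previous step and the Cauchy--Schwarz bound for the duality pairing,
\begin{eqnarray*}
|\Lambda(f)| \leq \|q_T\|_{\mathbf{H}^{-1}(\Omega)}\,\|\varphi(\cdot,T)\|_{\mathbf{H}^1_0(\Omega)} \leq C(u)\,\|q_T\|_{\mathbf{H}^{-1}(\Omega)}\,\|f\|_{\L^p(0,T;\mathbf{L}^p(\Omega))},
\end{eqnarray*}
so that $\Lambda$ is bounded with $\|\Lambda\| \leq C(u)\|q_T\|_{\mathbf{H}^{-1}(\Omega)}$. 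Since $1 < p < \infty$ and $\mathbf{L}^p(\Omega)$ is reflexive, the dual of $\L^p(0,T;\mathbf{L}^p(\Omega))$ is isometrically identified with $\L^{p'}(0,T;\mathbf{L}^{p'}(\Omega))$; hence there is a unique $q \in \L^{p'}(0,T;\mathbf{L}^{p'}(\Omega))$ representing $\Lambda$, namely $\Lambda(f) = \int_0^T \int_\Omega q \cdot f \, \d x \, \d t$ for all $f$, with $\|q\|_{\L^{p'}(0,T;\mathbf{L}^{p'}(\Omega))} = \|\Lambda\| \leq C(u)\|q_T\|_{\mathbf{H}^{-1}(\Omega)}$. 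This $q$ satisfies~\eqref{varform} by construction, which gives existence together with the announced estimate.

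For uniqueness, if $q_1$ and $q_2$ are two transposition solutions, then $\int_0^T \int_\Omega (q_1 - q_2) \cdot f \, \d x \, \d t = 0$ for every $f \in \L^p(0,T;\mathbf{L}^p(\Omega))$, which forces $q_1 = q_2$ in $\L^{p'}(0,T;\mathbf{L}^{p'}(\Omega))$ via the same duality identification. The only point requiring genuine care is the first step: one must check that $\varphi(\cdot,T)$ indeed lies in $\mathbf{H}^1_0(\Omega)$, so that the pairing with $q_T \in \mathbf{H}^{-1}(\Omega)$ is meaningful and continuous in $f$. This is exactly where the maximal-regularity estimate of Proposition~\ref{propsyslinwell} together with the trace embedding $\mathscr{U} \hookrightarrow C([0,T];\mathscr{U}_0) \hookrightarrow C([0,T];\mathbf{H}^1_0(\Omega))$ are essential; once this continuity is secured, the remainder is a routine application of the duality of Bochner $\L^p$-spaces.
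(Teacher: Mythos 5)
Your proposal is correct and follows essentially the same transposition argument as the paper: the paper defines $\Lambda(u): f \mapsto \varphi(T)$, notes it is bounded from $\L^p(0,T;\mathbf{L}^p(\Omega))$ into $\mathbf{H}^1_0(\Omega)$ by Proposition~\ref{propsyslinwell}, and sets $q = \Lambda(u)^{\ast}(q_T)$, which is precisely your construction phrased via the adjoint operator rather than via the Riesz-type representation of the bounded functional $f \mapsto \langle q_T ; \varphi(\cdot,T)\rangle$ in the dual Bochner space. The embedding argument you flag as the delicate point ($\mathscr{U} \hookrightarrow C([0,T];\mathscr{U}_0) \hookrightarrow C([0,T];\mathbf{H}^1_0(\Omega))$ since $2/p' \geq 1$) is exactly the one the paper records immediately after Definition~\ref{deftrans}, and your uniqueness step coincides with the paper's.
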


\begin{proof}
	Denote by $\Lambda(u)$ the mapping defined by
	\begin{eqnarray*}
	\Lambda(u) : f & \mapsto & \varphi(T),
	\end{eqnarray*}
	where $\varphi$ is the solution of system~\eqref{syslindefadj}. From Proposition~\ref{propsyslinwell}, $\Lambda(u)$ is a bounded operator from $\L^p(0,T;\mathbf{L}^p(\Omega))$ into $\mathbf{H}^1_0(\Omega)$. Thus $\Lambda(u)^{\ast}$ is a bounded operator from $\mathbf{H}^{-1}(\Omega)$ into $\L^{p'}(0,T;\mathbf{L}^{p'}(\Omega))$. Setting $q = \Lambda(u)^{\ast}(q_T)$, we can verify that $q \in \L^{p'}(0,T;\mathbf{L}^{p'}(\Omega))$ satisfies~\eqref{varform}. To prove the uniqueness, we assume that $q_T = 0$. In that case, if $q$ is a solution of~\eqref{sysadjLPmax} in the sense of Definition~\ref{deftrans}, then from~\eqref{varform}, we deduce that $q =0$ in $\L^p(0,T;\mathbf{L}^p(\Omega))' = \L^{p'}(0,T;\mathbf{L}^{p'}(\Omega))$.
\end{proof}

\subsection{Verification of assumptions for the viscous Shallow-Water equations} \label{Appendix-SW}

In section~\ref{appendix-Lp} we showed that the $\L^p$-maximal regularity framework enables us to get wellposedness for a class of viscous partial differential equations. We are now interested in the viscous shallow-water equations, whose - non viscous - case is considered for the numerical illustrations in section~\ref{sec-SW} (more specifically, see section~\ref{sec-SW-thpoints} for a discussion). This system writes as
\begin{eqnarray}
\left\{
\begin{array} {rcl}
	\displaystyle \frac{\p u}{\p t} -\kappa \Delta u + \divg (F(u)) = 0
	& & \text{in } \Omega \times (0,T), \\
	u_2 = 0 & & \text{on } \p \Omega \times (0,T), \\
	u(\cdot,0) = u_0 & & \text{in } \Omega,
\end{array} \right. \label{syssuperSW}
\end{eqnarray}
with
\begin{eqnarray*}
	F(u) & = & \left(u_2, \frac{u_2 \otimes u_2}{u_1} + \frac{g}{2}u_1^2 \I_{\R^2}\right).
\end{eqnarray*}
In system~\eqref{syssuperSW} the unknown $u= (u_1,u_2) = (H,Hv)$ is made with the height $H$ and the velocity $v$. Given a constant initial condition $H_0$, as initial data we consider $u_0 = (H_0, 0)$. We prefer to handle $H_{\#} := H-H_0$, and rather consider the non-conservative form dealing with $u = (H_{\#},v)$ as unknown. The equivalent non-conservative form of system~\eqref{syssuperSW} writes as
\begin{eqnarray}
\left\{
\begin{array} {rcl}
\displaystyle \frac{\p H_{\#}}{\p t} -\kappa \Delta H_{\#}
+ \divg ((H_{\#}+H_0)v) = 0
& & \text{in } \Omega \times (0,T), \\[10pt]
\displaystyle \frac{\p v}{\p t} -\kappa\Delta v
+ (v \cdot \nabla ) v + g\nabla H_{\#} = f
& & \text{in } \Omega \times (0,T), \\
v = 0 & & \text{on } \p \Omega \times (0,T), \\
(H_{\#},v)(\cdot,0) = u_0 & & \text{in } \Omega,
\end{array} \right. \label{eqSWApp}
\end{eqnarray}
with the condition
\begin{eqnarray*}
	\int_{\Omega} H_{\#}(x,t) \d x  =  0, & & t\in (0,T).
\end{eqnarray*}
We show in this subsection that for the non-conservative form the results of Proposition~\ref{prop01} and Proposition~\ref{prop02} hold, still in the framework of the $\L^p$-maximal regularity. The space considered here for the solution $(H_{\#},v)$ will be
\begin{eqnarray}
\mathscr{U} & = &
\L^p(0,T;\mathbf{W}^{2,p}(\Omega) \cap \mathbf{W}_{\#}^{1,p}(\Omega))
\cap \W^{1,p}(0,T;\mathbf{L}^p(\Omega)), \label{newWT}
\end{eqnarray}
with
\begin{eqnarray*}
\mathbf{W}_{\#}^{1,p}(\Omega) & := &
\left\{
(H_{\#},v) \in \W^{1,p}(\Omega) \times \left[\W_0^{1,p}(\Omega)\right]^2
\mid \ \int_{\Omega} H_{\#} \d \Omega  =  0
\right\}.
\end{eqnarray*}
The techniques of section~\ref{appendix-Lp} can be used to obtain for~\eqref{eqSWApp} existence results that are analogous to those obtained for~\eqref{syslin}. The steps of section~\ref{appendix-Lp} are the same, except that below we adapt Lemma~\ref{lemma-estlip}. We keep the same notation as in section~\ref{appendix-Lp}.

\subsubsection{$\L^p$-maximal regularity for the non-conservative form}

In this section, Lemma~\ref{lemma-estlip} is replaced by the following one.

\begin{lemma} \label{lemma-estlip-SW}
	Let be $R>0$ and $T >0$. Then there exists a constant $C>0$, which does not depend on $R$ or $T$, such that for $v \in \mathcal{B}^{(\kappa)}_T(R)$ we have
\begin{subequations}
	\begin{eqnarray}
	\| \divg((H_{\#}+H_0)v) \|_{\L^p(0,T; \mathbf{L}^p(\Omega))} & \leq &
	C \left(1+2C^{(\kappa)}_TR\right)T^{1/p}, \label{estlip1-SW1} \\
	\| (v \cdot \nabla)v \|_{\L^p(0,T; \mathbf{L}^p(\Omega))} & \leq &
	\left( 2C^{(\kappa)}_TR \right)^2T^{1/p}. \label{estlip1-SW2}
	\end{eqnarray}
\end{subequations}
	Moreover, there exists a constant $C>0$, which does not depend on $T$ or $R$, such that for all $({H_{\#}}_1,v_1), ({H_{\#}}_2,v_2) \in \mathcal{B}^{(\kappa)}_T(R)$ we have
\begin{subequations}
	\begin{eqnarray}
	& & \| \divg(({H_{\#}}_1+H_0)v_1) - \divg(({H_{\#}}_2+H_0)v_2) \|_{\L^p(0,T; \mathbf{L}^p(\Omega))}  \leq  \nonumber \\
	& & C \left(1+4C^{(\kappa)}_TR\right)T^{1/p}
	\left( \|v_1-v_2\|_{\L^{\infty}(0,T;\mathbf{W}^{2/p',p}(\Omega))}+ \|{H_{\#}}_1-{H_{\#}}_2\|_{\L^{\infty}(0,T;\mathbf{W}^{2/p',p}(\Omega))}\right), \label{estlip2-SW1} \\
& & 	\| (v_1 \cdot \nabla)v_1 - (v_2 \cdot \nabla)v_2 \|_{\L^p(0,T; \mathbf{L}^p(\Omega))}
	 \leq
	4C^{(\kappa)}_TR T^{1/p}
	\|v_1-v_2\|_{\L^{\infty}(0,T;\mathbf{W}^{2/p',p}(\Omega))}. \label{estlip2-SW2}
	\end{eqnarray}
\end{subequations}
\end{lemma}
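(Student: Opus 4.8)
The plan is to follow the argument of Lemma~\ref{lemma-estlip} almost verbatim, with the single simplification that the nonlinearities appearing in the non-conservative form~\eqref{eqSWApp} are explicit bilinear and quadratic expressions. This means that instead of invoking the Nemytskii-operator estimates of \cite{BB1974} for a general $F \in \mathcal{C}^2$, I can estimate each term directly through the Banach algebra property of $\mathbf{W}^{1,p}(\Omega)$ (valid since $p > d$, see \cite[Lemma~A.1]{BB1974}) together with the embedding chain $\mathbf{W}^{2/p',p}(\Omega) \hookrightarrow \mathbf{W}^{1,p}(\Omega) \hookrightarrow C(\overline{\Omega})$. Throughout, the only structural fact used about the ball is that $(H_{\#},v) \in \mathcal{B}^{(\kappa)}_T(R)$ forces $\|(H_{\#},v)(\cdot,t)\|_{\mathbf{W}^{2/p',p}(\Omega)} \leq 2C^{(\kappa)}_T R$ for almost every $t$, and hence, after applying the embedding constants, the same bound up to a fixed factor on the $\mathbf{W}^{1,p}(\Omega)$ and $C(\overline{\Omega})$ norms. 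Each estimate is obtained pointwise in $t$ and then integrated over $(0,T)$, the supremum-in-time control producing the factor $T^{1/p}$.

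For the boundedness estimates I would first treat the continuity nonlinearity by writing, at fixed $t$,
\[
\|\divg((H_{\#}+H_0)v)\|_{\mathbf{L}^p(\Omega)} \leq C\|(H_{\#}+H_0)v\|_{\mathbf{W}^{1,p}(\Omega)} \leq C\,\|H_{\#}+H_0\|_{\mathbf{W}^{1,p}(\Omega)}\,\|v\|_{\mathbf{W}^{1,p}(\Omega)},
\]
then bounding $\|v\|_{\mathbf{W}^{1,p}(\Omega)} \leq C\,2C^{(\kappa)}_T R$ and $\|H_{\#}+H_0\|_{\mathbf{W}^{1,p}(\Omega)} \leq \|H_{\#}\|_{\mathbf{W}^{1,p}(\Omega)} + |H_0|\,|\Omega|^{1/p} \leq C(1 + 2C^{(\kappa)}_T R)$; integrating in time yields~\eqref{estlip1-SW1}. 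For the convective term I would estimate componentwise, $[(v\cdot\nabla)v]_i = \sum_j v_j \partial_j v_i$, using $\|v_j\partial_j v_i\|_{\mathbf{L}^p(\Omega)} \leq \|v_j\|_{C(\overline{\Omega})}\|\partial_j v_i\|_{\mathbf{L}^p(\Omega)}$, so that $\|(v\cdot\nabla)v\|_{\mathbf{L}^p(\Omega)} \leq C\|v\|_{C(\overline{\Omega})}\|v\|_{\mathbf{W}^{1,p}(\Omega)} \leq (2C^{(\kappa)}_T R)^2$ after the embeddings; integration in time gives~\eqref{estlip1-SW2}.

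The two Lipschitz estimates follow from bilinearity and the same ingredients. For the convective term I would use the splitting
\[
(v_1\cdot\nabla)v_1 - (v_2\cdot\nabla)v_2 = ((v_1-v_2)\cdot\nabla)v_1 + (v_2\cdot\nabla)(v_1-v_2),
\]
and bound each summand in $\mathbf{L}^p(\Omega)$ by $C\,2C^{(\kappa)}_T R\,\|v_1-v_2\|_{\mathbf{W}^{2/p',p}(\Omega)}$, which after integration gives~\eqref{estlip2-SW2}. For the continuity term the analogous splitting $(H_{\#,1}+H_0)v_1 - (H_{\#,2}+H_0)v_2 = (H_{\#,1}-H_{\#,2})v_1 + (H_{\#,2}+H_0)(v_1-v_2)$, followed by $\divg$ and the Banach algebra estimate, produces the factor $(1+4C^{(\kappa)}_T R)$ and the sum of the two difference norms appearing in~\eqref{estlip2-SW1}.

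The only genuinely delicate point, everything else being a transcription of Lemma~\ref{lemma-estlip}, is the bookkeeping of the constant base height $H_0$. Since $H_0$ does not decay, the product $(H_{\#}+H_0)v$ splits into a genuinely quadratic part $\divg(H_{\#}v)$ and a linear-in-$v$ part $H_0\,\divg v$, and it is precisely the latter that must be arranged so as to generate the additive, $R$-independent term ``$1$'' in the bounds rather than an extra factor of $R$. I would also record that the submultiplicativity of the $\mathbf{W}^{1,p}$-norm and the embedding $\mathbf{W}^{1,p}(\Omega)\hookrightarrow C(\overline{\Omega})$ remain available on the zero-mean subspace $\mathbf{W}^{1,p}_{\#}(\Omega)$, which is immediate since it is a closed subspace of $\mathbf{W}^{1,p}(\Omega)$.
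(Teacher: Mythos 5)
Your proposal is correct in substance, and for the convective terms it coincides with the paper's proof: the paper establishes \eqref{estlip1-SW2} by exactly your H\"older-plus-embedding estimate $\|(v\cdot\nabla)v\|_{\mathbf{L}^p(\Omega)}\le \|v\|_{\mathbf{L}^{\infty}(\Omega)}\|\nabla v\|_{\mathbf{L}^p(\Omega)}$, and \eqref{estlip2-SW2} by the identical splitting $((v_1-v_2)\cdot\nabla)v_1+(v_2\cdot\nabla)(v_1-v_2)$. The genuine difference is in the continuity-equation estimates \eqref{estlip1-SW1} and \eqref{estlip2-SW1}: the paper dispatches these in one sentence by remarking that $(H_{\#},v)\mapsto (H_{\#}+H_0)v$ is a $\mathcal{C}^2$ flux, so that Lemma~\ref{lemma-estlip} (i.e.\ the Nemytskii-operator estimates of~\cite{BB1974}) applies as is, whereas you re-derive them by hand from the Banach-algebra property of $\mathbf{W}^{1,p}(\Omega)$ and the bilinear splitting $({H_{\#}}_1-{H_{\#}}_2)v_1+({H_{\#}}_2+H_0)(v_1-v_2)$. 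Your route is more elementary and self-contained, and for the Lipschitz bound it is in fact sharper: it produces the factor $(1+4C^{(\kappa)}_TR)$ directly, while a literal application of Lemma~\ref{lemma-estlip} carries the extra factor $\|F\|_{\mathcal{C}^2(K_T(R))}$, which for this quadratic flux grows with $R$. What the paper's route buys is brevity and uniformity (no case-by-case computation); what yours buys is explicit, $F$-independent constants.

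One bookkeeping caveat, which you share with the paper rather than introduce. Your product estimate gives $\|\divg((H_{\#}+H_0)v)\|_{\mathbf{L}^p(\Omega)}\le C(1+2C^{(\kappa)}_TR)\|v\|_{\mathbf{W}^{1,p}(\Omega)}$, hence after integration in time a bound quadratic in $R$, namely $C(1+2C^{(\kappa)}_TR)\,2C^{(\kappa)}_TR\,T^{1/p}$; this is not literally \eqref{estlip1-SW1}, whose right-hand side is linear in $R$ with $C$ independent of $R$. In fact no argument can give \eqref{estlip1-SW1} exactly as stated: on $\mathcal{B}^{(\kappa)}_T(R)$ the genuinely quadratic part $\divg(H_{\#}v)$ scales like $R^2$, and the paper's own one-line citation of Lemma~\ref{lemma-estlip} conceals the same $R$-dependence inside the suppressed factor $\|F\|_{\mathcal{C}^1(K_T(R))}$. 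Since the only property used downstream (the contraction argument for local existence) is the presence of the factor $T^{1/p}$ at fixed $R$, the discrepancy is harmless; but it would be cleaner to state your bound with the extra factor of $R$ rather than claim it ``yields'' \eqref{estlip1-SW1} verbatim.
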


\begin{proof}
	Estimates~\eqref{estlip1-SW1} and~\eqref{estlip2-SW1} were actually obtained in Lemma~\ref{lemma-estlip}. For proving~\eqref{estlip1-SW2}, we write:
	\begin{eqnarray*}
	\| (v \cdot \nabla)v \|_{\L^p(0,T; \mathbf{L}^p(\Omega))} & \leq &
	\| \nabla v \|_{\L^{\infty}(0,T; \mathbf{L}^p(\Omega))}
	\| v \|_{\L^{\infty}(0,T; \mathbf{L}^{\infty}(\Omega))}T^{1/p} \\
	&  \leq & \| v \|^2_{\L^{\infty}(0,T; \mathbf{W}^{2/{p'},p}(\Omega))}T^{1/p}.
	\end{eqnarray*}
	For proving~\eqref{estlip2-SW2}, we write:
	\begin{eqnarray*}
	(v_1 \cdot \nabla)v_1 - (v_2 \cdot \nabla)v_2 & = &
	((v_1 - v_2) \cdot \nabla)v_1 + (v_2 \cdot \nabla)(v_1-v_2), \\
	\| (v_1 \cdot \nabla)v_1 - (v_2 \cdot \nabla)v_2 \|_{\L^p(0,T; \mathbf{L}^p(\Omega))}
	& \leq &
	\| \nabla v_1 \|_{\L^{\infty}(0,T; \mathbf{L}^p(\Omega))}
	\| v_1-v_2 \|_{\L^{\infty}(0,T; \mathbf{L}^{\infty}(\Omega))}T^{1/p} \\
	& & + 	\| \nabla (v_1-v_2) \|_{\L^{\infty}(0,T; \mathbf{L}^p(\Omega))}
	\| v_2 \|_{\L^{\infty}(0,T; \mathbf{L}^{\infty}(\Omega))}T^{1/p} \\
	& \leq & \left(\|v_1\|_{\L^{\infty}(0,T;\mathbf{W}^{2/p',p}(\Omega))} +
	\|v_2\|_{\L^{\infty}(0,T;\mathbf{W}^{2/p',p}(\Omega))}\right)T^{1/p} \times \\
	& & \|v_1-v_2\|_{\L^{\infty}(0,T;\mathbf{W}^{2/p',p}(\Omega))},
	\end{eqnarray*}
	which yields the announced estimate.
\end{proof}

Now the analogous result to Proposition~\ref{prop-bof} can be deduced for system~\eqref{eqSWApp}, namely the existence and uniqueness of a local-in-time solution $(\overline{H},v)$ in $\mathscr{U}$ for~\eqref{eqSWApp}.

\subsubsection{$\L^p$-maximal regularity for the non-autonomous linearized system}
Linearizing~\eqref{eqSWApp} around a given state $(\overline{H}, \overline{v})$, we obtain the following system with $(H,v)$ as unknown:
\begin{eqnarray}
\left\{
\begin{array} {rcl}
\displaystyle \frac{\p H}{\p t} -\kappa \Delta H
+ \divg ((\overline{H}+H_0)v) + \divg(H\overline{v}) = 0
& & \text{in } \Omega \times (0,T), \\[10pt]
\displaystyle \frac{\p v}{\p t} -\kappa\Delta v
+ (v \cdot \nabla ) \overline{v} + (\overline{v} \cdot \nabla ) v
+ g\nabla H = f
& & \text{in } \Omega \times (0,T), \\
v = 0 & & \text{on } \p \Omega \times (0,T), \\
(H,v)(\cdot,0) = v_0 & & \text{in } \Omega.
\end{array} \right. \label{eqSW-Lin-App}
\end{eqnarray}
Proposition~\ref{prop1.3} can be used here, with
\begin{eqnarray*}
B(t)(H,v) & = & \left(\begin{matrix}
\divg ((\overline{H}(\cdot,t)+H_0)v) + \divg(H\overline{v}(\cdot,t)) \\
(v \cdot \nabla ) \overline{v}(\cdot,t) + (\overline{v}(\cdot,t) \cdot \nabla ) v
+ g\nabla H
\end{matrix}\right),
\end{eqnarray*}
in order to deduce the existence and uniqueness of a solution in $\mathscr{U}$ for system~\eqref{eqSW-Lin-App}. For this purpose, the proof of Proposition~\ref{propsyslinwell} can be adapted so that it is sufficient to estimate the only term in~\eqref{eqSW-Lin-App} which does not have conservative form, namely
\begin{eqnarray*}
\|(v \cdot \nabla ) \overline{v} + (\overline{v} \cdot \nabla ) v\|_{\mathbf{L}^p(\Omega)}
& \leq & \|\overline{v} \|_{\mathbf{L}^{\infty}(\Omega)}\|v \|_{\mathbf{W}^{1,p}(\Omega)} +
\|v \|_{\mathbf{L}^{\infty}(\Omega)}\|\overline{v} \|_{\mathbf{W}^{1,p}(\Omega)} \\
& \leq &  2\|\overline{v} \|_{\mathbf{W}^{1,p}(\Omega)}\|v \|_{\mathbf{W}^{1,p}(\Omega)}
\leq C\|\overline{v} \|_{\L^{\infty}(0,T;\mathbf{W}^{2/{p'},p}(\Omega))}\|v \|_{\mathbf{W}^{2/{p'},p}(\Omega)}
\\
& \leq & C\|\overline{v} \|_{\L^{\infty}(0,T;\mathbf{W}^{2/{p'},p}(\Omega))}
\left(\frac{\alpha^p}{p}\|v\|_{\LL^p(\Omega)} + \frac{1}{p'\alpha^{p'}}\|v\|_{\WW^{2,p}(\Omega)}\right),
\end{eqnarray*}
where we used the Young's inequality, for some $\alpha >0$ which has to be chosen large enough. Since adapting section~\ref{App2-adj} for studying the adjoint system of the Shallow-Water equations does not present any additional difficulty, this part is left to the reader.

\section*{Acknowledgments}
The authors gratefully acknowledge support by the Austrian Science Fund (FWF) special
research grant SFB-F32 "Mathematical Optimization and Applications in Biomedical
Sciences", and by the ERC advanced grant 668998 (OCLOC) under the EU's H2020
research program.

\bibliographystyle{alpha}
\bibliography{SW_ref}

\begin{thebibliography}{SMSTT08}

\bibitem[ACFP07]{Arendt2007}
Wolfgang Arendt, Ralph Chill, Simona Fornaro, and C\'esar Poupaud.
\newblock {$L^p$}-maximal regularity for non-autonomous evolution equations.
\newblock {\em J. Differential Equations}, 237(1):1--26, 2007.

\bibitem[Ada75]{Adams}
Robert~A. Adams.
\newblock {\em Sobolev spaces}.
\newblock Academic Press [A subsidiary of Harcourt Brace Jovanovich,
  Publishers], New York-London, 1975.
\newblock Pure and Applied Mathematics, Vol. 65.

\bibitem[All07]{Allaire}
Gr{\'e}goire Allaire.
\newblock {\em Conception optimale de structures}, volume~58 of {\em
  Math\'ematiques \& Applications (Berlin) [Mathematics \& Applications]}.
\newblock Springer-Verlag, Berlin, 2007.
\newblock With the collaboration of Marc Schoenauer (INRIA) in the writing of
  Chapter 8.

\bibitem[BB74]{BB1974}
J.~P. Bourguignon and H.~Brezis.
\newblock Remarks on the {E}uler equation.
\newblock {\em J. Functional Analysis}, 15:341--363, 1974.

\bibitem[BL76]{Bergh}
J\"oran Bergh and J\"orgen L\"ofstr\"om.
\newblock {\em Interpolation spaces. {A}n introduction}.
\newblock Springer-Verlag, Berlin-New York, 1976.
\newblock Grundlehren der Mathematischen Wissenschaften, No. 223.

\bibitem[BlRN79]{BLN1979}
C.~Bardos, A.~Y. le~Roux, and J.-C. N\'ed\'elec.
\newblock First order quasilinear equations with boundary conditions.
\newblock {\em Comm. Partial Differential Equations}, 4(9):1017--1034, 1979.

\bibitem[CKP17]{CKP1}
S\'ebastien Court, Karl Kunisch, and Laurent Pfeiffer.
\newblock Optimal control for a class of infinite dimensional systems involving
  an $l^{\infty}$-term in the cost functional.
\newblock {\em ZAMM}, pages 1--20, 2017.

\bibitem[CKP18]{CKP2}
S\'ebastien Court, Karl Kunisch, and Laurent Pfeiffer.
\newblock Hybrid optimal control problems for a class of semilinear parabolic
  equations.
\newblock {\em DCDS, Series S}, to appear, 2018.

\bibitem[Cor02]{Coron2002}
Jean-Michel Coron.
\newblock Local controllability of a 1-{D} tank containing a fluid modeled by
  the shallow water equations.
\newblock {\em ESAIM Control Optim. Calc. Var.}, 8:513--554 (electronic), 2002.
\newblock A tribute to J. L. Lions.

\bibitem[Cou12]{ThesisCourt}
S\'ebastien Court.
\newblock {\em Interaction problems between a deformable structure and a
  viscous incompressible fluid}.
\newblock PhD thesis, Universit\'e Toulouse III, 2012.

\bibitem[Cou14]{CourtEECT2014}
S\'ebastien Court.
\newblock Stabilization of a fluid-solid system, by the deformation of the
  self-propelled solid. {P}art {II}: {T}he nonlinear system.
\newblock {\em Evol. Equ. Control Theory}, 3(1):83--118, 2014.

\bibitem[Cou15]{CourtJDDE2015}
S\'ebastien Court.
\newblock Existence of 3d strong solutions for a system modeling a deformable
  solid inside a viscous incompressible fluid.
\newblock {\em J. Dyn. Diff. Equat.}, 2015.

\bibitem[DHP03]{AMS2003}
Robert Denk, Matthias Hieber, and Jan Pr\"uss.
\newblock {$\mathscr R$}-boundedness, {F}ourier multipliers and problems of
  elliptic and parabolic type.
\newblock {\em Mem. Amer. Math. Soc.}, 166(788):viii+114, 2003.

\bibitem[DL88]{Dubois1988}
Fran\c{c}ois Dubois and Philippe LeFloch.
\newblock Boundary conditions for nonlinear hyperbolic systems of conservation
  laws.
\newblock {\em J. Differential Equations}, 71(1):93--122, 1988.

\bibitem[DPR99]{Dubois1999}
F.~Dubois, N.~Petit, and P.~Rouchon.
\newblock Motion planning and nonlinear simulations for a tank containing a
  fluid.
\newblock In {\em Control Conference (ECC), 1999 European}, pages 3232--3237,
  1999.

\bibitem[Gal94]{Galdi}
Giovanni~P. Galdi.
\newblock {\em An introduction to the mathematical theory of the
  {N}avier-{S}tokes equations. {V}ol. {I}}, volume~38 of {\em Springer Tracts
  in Natural Philosophy}.
\newblock Springer-Verlag, New York, 1994.
\newblock Linearized steady problems.

\bibitem[GU10a]{GilesUlbrich1}
Mike Giles and Stefan Ulbrich.
\newblock Convergence of linearized and adjoint approximations for
  discontinuous solutions of conservation laws. {P}art 1: {L}inearized
  approximations and linearized output functionals.
\newblock {\em SIAM J. Numer. Anal.}, 48(3):882--904, 2010.

\bibitem[GU10b]{GilesUlbrich2}
Mike Giles and Stefan Ulbrich.
\newblock Convergence of linearized and adjoint approximations for
  discontinuous solutions of conservation laws. {P}art 2: {A}djoint
  approximations and extensions.
\newblock {\em SIAM J. Numer. Anal.}, 48(3):905--921, 2010.

\bibitem[OZ13]{OuZhu2013}
Yaobin Ou and Peicheng Zhu.
\newblock The vanishing viscosity method for the sensitivity analysis of an
  optimal control problem of conservation laws in the presence of shocks.
\newblock {\em Nonlinear Anal. Real World Appl.}, 14(5):1947--1974, 2013.

\bibitem[PU15]{PU2015}
Sebastian Pfaff and Stefan Ulbrich.
\newblock Optimal boundary control of nonlinear hyperbolic conservation laws
  with switched boundary data.
\newblock {\em SIAM J. Control Optim.}, 53(3):1250--1277, 2015.

\bibitem[PUL14]{PUL2014}
Sebastian Pfaff, Stefan Ulbrich, and G{\"u}nter Leugering.
\newblock Optimal control of nonlinear hyperbolic conservation laws with
  switching.
\newblock In {\em Trends in {PDE} constrained optimization}, volume 165 of {\em
  Internat. Ser. Numer. Math.}, pages 109--131. Birkh\"auser/Springer, Cham,
  2014.

\bibitem[Ray97]{BarBor}
M.~Raydan.
\newblock The {B}arzilai and {B}orwein gradient method for the large scale
  unconstrained minimization problem.
\newblock {\em SIAM J. Optim.}, 7(1):26--33, 1997.

\bibitem[SMSTT08]{ARMA2008}
Jorge San~Mart\'{i}n, Jean-Fran\c{c}ois Scheid, Tak\'eo Takahashi, and Marius
  Tucsnak.
\newblock An initial and boundary value problem modeling of fish-like swimming.
\newblock {\em Arch. Ration. Mech. Anal.}, 188(3):429--455, 2008.

\bibitem[Tri10]{Triebel}
Hans Triebel.
\newblock {\em Theory of function spaces}.
\newblock Modern Birkh\"auser Classics. Birkh\"auser/Springer Basel AG, Basel,
  2010.
\newblock Reprint of 1983 edition [MR0730762], Also published in 1983 by
  Birkh\"auser Verlag [MR0781540].

\bibitem[Ulb99]{Ulbrich1999}
Stefan Ulbrich.
\newblock On the existence and approximation of solutions for the optimal
  control of nonlinear hyperbolic conservation laws.
\newblock In {\em Optimal control of partial differential equations
  ({C}hemnitz, 1998)}, volume 133 of {\em Internat. Ser. Numer. Math.}, pages
  287--299. Birkh\"auser, Basel, 1999.

\bibitem[Ulb02]{Ulbrich2002}
Stefan Ulbrich.
\newblock A sensitivity and adjoint calculus for discontinuous solutions of
  hyperbolic conservation laws with source terms.
\newblock {\em SIAM J. Control Optim.}, 41(3):740--797 (electronic), 2002.

\bibitem[Ulb03]{Ulbrich2003}
Stefan Ulbrich.
\newblock Adjoint-based derivative computations for the optimal control of
  discontinuous solutions of hyperbolic conservation laws.
\newblock {\em Systems Control Lett.}, 48(3-4):313--328, 2003.
\newblock Optimization and control of distributed systems.

\bibitem[ZF12]{Farhat2012}
Xianyi Zeng and Charbel Farhat.
\newblock A systematic approach for constructing higher-order immersed boundary
  and ghost fluid methods for fluid-structure interaction problems.
\newblock {\em J. Comput. Phys.}, 231(7):2892--2923, 2012.

\end{thebibliography}

\end{document}